\theoremstyle{plain}
\newtheorem{main}{Theorem}
\newtheorem{maincor}[main]{Corollary}
\newtheorem{theorem}{Theorem}[section]
\newtheorem{lemma}[theorem]{Lemma}
\newtheorem{proposition}[theorem]{Proposition}
\newtheorem{corollary}[theorem]{Corollary}
\theoremstyle{remark}
\newtheorem{remark}[theorem]{Remark}
\newtheorem{definition}[theorem]{Definition}
\newtheorem{conjecture}[theorem]{Conjecture}
\newcommand{\Leb}{\operatorname{vol}}
\newcommand{\C}{\operatorname{C}}
\newcommand{\Sing}{\operatorname{Sing}}
\newcommand{\Card}{\operatorname{Card}}
\newcommand{\Jac}{\operatorname{Jac}}
               \def\cal{\mathcal}
           \def\ea{\end{array}}
          \def\ec{\end{center}}
     \def\ed{\end{description}}
        \def\ee{\end{equation}}
       \def\eea{\end{eqnarray}}
     \def\eeaa{\end{eqnarray*}}
 \def\et{\end{thebibliography}}
\def\bib{\bibitem}
\def\R{{\cal R}}
\def\Orb{{\rm Orb}}
\def\Diff{{\rm Diff}}
\def\Sing{{\rm Sing}}
\def\supp{\operatorname{supp}}
\def\cG{{\mathcal G}}
\def\cA{{\mathcal A}}
\def\cD{{\mathcal D}}
\def\cC{{\mathcal C}}
\def\cO{{\mathcal O}}
\def\cI{{\mathcal I}}
\def\cU{{\mathcal U}}
\def\cR{{\mathcal R}}
\def\cF{{\mathcal F}}
\def\cM{{\mathcal M}}
\def\cN{{\mathcal N}}
\def\cP{{\mathcal P}}
\def\cT{{\mathcal T}}
\def\cR{{\mathcal R}}
\def\length{\operatorname{length}}
\def\vep{\varepsilon}
\def\ln{\operatorname{ln}}
\def\sp{\operatorname{sp}}
\title{Topological entropy of Lorenz-like flows}
\author{Jiagang Yang}
\date{\today}
\thanks{J.Y. is partially supported by CNPq, FAPERJ, and PRONEX.}
\address{Departamento de Geometria, Instituto de Matem\'atica e Estat\'istica, Universidade
Federal Fluminense, Niter\'oi, Brazil}
\email{yangjg\@@impa.br}
\begin{document}

\begin{abstract} We use entropy theory as a new tool for studying Lorenz-like classes of flows in any
dimension. More precisely,
we show that every Lorenz-like class is entropy expansive, and has positive 
entropy which varies continuously with vector fields. We deduce that every such class 
contains a transverse homoclinic orbit and, generically, is an attractor.

\end{abstract}

\maketitle

\setcounter{tocdepth}{1} \tableofcontents


\section{Introduction}

The study of Lorenz attractors, comes from the famous article of Lorenz~\cite{L63}, 
where he did numerical investigation of the following system:

\begin{equation*}
\left\{
\begin{aligned}
\dot{x}&=-\sigma x+\sigma y \;\;\;\;\;\;\;\;\; \sigma=10       \\
\dot{y}&=r\-y-xz\;\;\;\;\;\;\;\;\;\;\;\;    r=28\\
\dot{z}&=-bz+xy\;\;\;\;\;\;\;\;\;  b=8/3\\
\end{aligned}
\right.
\end{equation*}
Lorenz observed that, the solutions of the above simple equations, are sensitive with
respect to the initial conditions. Sometime later, a 
geometric Lorenz model for this attractor was proposed in~\cite{Gu76,ABS77,GW79, Wi79}, which 
provides a classical non-uniformly hyperbolic, but robust transitive example for time continuous 
systems. Finally it was proved much later by~\cite{Tu99, Tu01}, with the help of computer that, 
the Lorenz attractor does exist and is exactly a kind of the geometric model. 
The history of this story can be found in~\cite{V}. 

Inspired by the proposition of the Lorenz-like class, 
a theory for 3-dimensional robustly transitive strange attractor was built in~\cite{MPP98, MPP04}, 
which showed that, any robust attractor of a 3-dimensional flow which contains both singularity and regular 
orbits must be singular-hyperbolic, that is, it admits a dominated splitting $E^s\oplus F^{cu}$ on 
the tangent bundle into a 1-dimensional uniformly contracting sub-bundle and a 2-dimensional volume-expanding 
sub-bundle. The notion of Lorenz-like class, is a generalization of the above constructions and results.

\begin{definition}\label{d.lorenzlike}

A compact invariant set $\Lambda$ of a $\C^1$ flow $\phi_t$ is a {\it Lorenz-like class} if it contains both
singularity and regular points, and satisfies the following conditions:

\begin{itemize}
\item[(a)] $\Lambda$ is a chain recurrent class;

\item[(b)] $\Lambda$ is {\it Lyapunov stable}, i.e., there is a sequence of compact neighborhoods $\{U_i\}$ 
such that: 
\begin{itemize}
\item $U_1 \supset U_2\supset\dots$ and $\cap_{i\geq 1} U_i=\Lambda$;
\item for each $i\geq 1$, $\phi_t(U_{i+1})\subset U_i$ for any $t\geq 0$.
\end{itemize}

\item[(c)] $\Lambda$ is sectional-hyperbolic, i.e., $\Lambda$ admits a dominated splitting $E^s\oplus F^{cu}$
such that the bundle $E^s$ is uniformly contracting, and the bundle $F^{cu}$ is sectional-expanding, which means,
there are $K,\lambda>0$ such that for any subspace $V_x\subset F^{cu}(x)$ with $x\in \Lambda$ and 
$dim(V_x)\geq 2$, we have:
$$|\det(DX_t(x)|_{V_x})|\geq K\exp^{\lambda t}\;\; \text{for all}\;\; t>0.$$

\end{itemize}
\end{definition}

Note that we \textbf{neither} assume the singularities to be hyperbolic \textbf{nor} the class to be isolated. An example 
for higher dimensional Lorenz-like class (which is, in fact, an attractor), was constructed in~\cite{BPVbis} 
with $dim(F^{cu})>2$. More recently,~\cite{SGW} proved that, for generic star flows, every non-trivial 
Lyapunov stable chain recurrent class is Lorenz-like, where a $\C^1$ flow is a {\it star flow} if for any flow 
nearby, its critical elements are all hyperbolic (see also~\cite{LGW},~\cite{ZGW}). 

The results on 3-dimensional Lorenz attractors are quite fruitful, e.g., see~\cite{MPP98, MPP04, APPV09, AP, AP07}. In fact, 
as in the construction of geometric Lorenz model, one can take 2-dimensional sections near to every singularity 
and analyze the Poincar\'e return maps. The corresponding Poincar\'e return maps are partially hyperbolic, the quotient 
along the stable lamination induces a family of 1-dimensional Lorenz maps (see~\cite{APPV09}). 
By this argument, several important properties were proved, for example:
every 3-dimensional singular hyperbolic attractor is a homoclinic class (~\cite{AP}) and is expansive 
(\cite{APPV09}). This argument is powerful, but is hard to apply on general Lorenz-like classes. One reason is 
that Lorenz-like classes may not be isolated, it will be difficult to choose sections. Another reason is that 
the corresponding higher dimensional Lorenz map is not well-studied. The non-hyperbolicity of the singularities also 
brings difficulty on the analysis of the orbits close to the singularities. In this paper, we will provide a new 
method to study Lorenz-like classes, which is based on the new development of entropy theory (~\cite{LVY}).

\begin{main}\label{main.A}

Let $\Lambda$ be a Lorenz-like class of a $\C^1$ flow $\phi_t$, then $\phi_t$ has positive topological entropy on $\Lambda$
and is entropy expansive in a neighborhood of $\Lambda$.

\end{main}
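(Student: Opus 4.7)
The plan is to treat entropy expansiveness and positivity of entropy separately, both leveraging sectional hyperbolicity together with the defining properties of the class.

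\textbf{Entropy expansiveness.} The natural strategy is to adapt the argument of Liao--Viana--Yang (originally for diffeomorphisms with a dominated splitting) to the present flow setting. First I would extend the splitting $E^s\oplus F^{cu}$ to invariant cone fields around $E^s$ and $F^{cu}$ on a small neighborhood $U$ of $\Lambda$, preserved by $D\phi_t$ for $t\ge 0$ and $t\le 0$ respectively; uniform contraction of $E^s$ gives strong stable manifolds $W^{ss}_\varepsilon(x)$ of uniform size at each $x\in U$. At a regular point the flow direction $X$ lies in $F^{cu}$, so sectional expansion applies to any $2$-plane spanned by $X$ and a non-zero vector in the $cu$-cone. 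The heart of the proof is the lemma: \emph{for $\varepsilon>0$ small, the orbit Bowen ball}
$$\Gamma^*_\varepsilon(x)=\{y:\exists\text{ reparametrization }\sigma,\ d(\phi_t y,\phi_{\sigma(t)}x)\le\varepsilon\ \forall t\in\RR\}$$
\emph{is contained in the local stable tube} $\bigcup_{|s|\le\eta} W^{ss}_\varepsilon(\phi_s x)$. Once this is shown, $\phi_t$ restricted to $\Gamma^*_\varepsilon(x)$ is contracting transverse to the flow and isometric along it, so carries zero topological entropy, which is precisely entropy expansiveness.

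\textbf{The lemma} I would prove by contradiction. If $y\in\Gamma^*_\varepsilon(x)$ lies outside this tube, then projecting along stable plaques onto a small cross section through the orbit of $x$ gives a non-zero displacement $v$ in the $cu$-cone. At a regular point the plane spanned by $v$ and $X$ lies in the $cu$-cone; by sectional expansion its area grows like $Ke^{\lambda t}$, forcing the component of the displacement transverse to the orbit direction to grow without bound, contradicting $d(\phi_t y,\phi_{\sigma(t)} x)\le \varepsilon$. The delicate point is controlling orbit pieces that linger near singularities of $\Lambda$, where $X$ degenerates and $\sigma$ loses regularity; one must show any Bowen-ball pairing spends only a controlled fraction of time near the singular set, so that the sectional-expansion estimate accumulates on the regular portion.

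\textbf{Positive topological entropy.} Here I would exploit Lyapunov stability directly. Fix a compact forward-invariant neighborhood $U_1\supset\Lambda$ and a regular point $x\in\Lambda$. Choose a small $C^1$ disc $D\ni x$ of dimension $2$ tangent to $F^{cu}(x)$; by domination, $\phi_t(D)$ stays tangent to the $cu$-cone and remains in $U_1$ for all $t\ge 0$, while sectional expansion yields $\Area(\phi_t(D))\ge Ke^{\lambda t}\Area(D)$. Exponential growth of the area of an immersed $2$-disc trapped inside a compact set then forces, via a Newhouse-type volume-growth inequality for partially hyperbolic $C^1$ systems, $h_{\mathrm{top}}(\phi_1|_\Lambda)\ge \lambda>0$. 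An alternative route, once entropy expansiveness is established, is to use upper semicontinuity of metric entropy to extract a non-trivial ergodic accumulation measure of the normalized area measures on $\phi_t(D)$, which automatically carries a positive Lyapunov exponent in $F^{cu}$; Katok's horseshoe theorem then produces a horseshoe in $\Lambda$ with positive topological entropy.

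\textbf{Main obstacle.} I expect the principal difficulty in both parts to be the handling of the (possibly non-hyperbolic) singularities in $\Lambda$. For entropy expansiveness, a Bowen-ball pairing may repeatedly approach a singularity, during which $\sigma$ and the $X$-direction are ill-behaved, so one has to separate the analysis into near-singular and regular epochs and verify that the latter dominate. For positive entropy, making the area-growth argument work in the $C^1$ category (where Yomdin's theorem is not available) requires either a cone-field version of Newhouse's inequality or a direct Pliss-time construction of a horseshoe from the sectional expansion, which is technically delicate.
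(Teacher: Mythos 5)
Your overall strategy matches the paper's in outline (cone fields, stable manifolds, Bowen-ball analysis driven by sectional expansion, volume growth for positivity, upper semicontinuity from entropy expansiveness to pass to the limit on $\Lambda$), but two steps would fail as proposed. First, for the near-singular epochs you propose to show that a Bowen-ball pairing ``spends only a controlled fraction of time near the singular set.'' This is not expected to hold and is not how the paper proceeds: the singularities are allowed to be non-hyperbolic and can form one-dimensional center segments, and even a single passage near a (topologically) hyperbolic singularity can last arbitrarily long as the orbit comes closer, so there is no uniform upper bound on the time fraction near $\Sing(X)$. The paper's mechanism is different and passage-by-passage. It first shows (Lemmas~\ref{l.awayfromIi}, \ref{l.extremepoints}) that generic points of any non-trivial ergodic measure can approach only finitely many extreme points of center segments, and only from a side on which the half-neighborhood is topologically hyperbolic; then Lemma~\ref{l.nearsingularity} builds a local product structure in the fake $cu$-leaf by blending \emph{two} fake foliations (one from the partially hyperbolic splitting at the singularities, one from the global dominated splitting) and proves a definite per-passage expansion factor for the transverse displacement, absorbing the arbitrarily long sojourn time into that factor. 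A secondary issue: your set $\Gamma^*_\varepsilon(x)$ with reparametrization is not the object appearing in the definition of entropy expansiveness used here; the paper works with the infinite Bowen ball $B_\infty(x,\varepsilon)$ of the \emph{time-one map} $f=\phi_1$, and this is forced because the fake foliations of~\cite{LVY} are locally invariant only under the discrete iteration, not under $\phi_t$ for non-integer $t$.

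For positivity, you name ``a cone-field version of Newhouse's volume-growth inequality'' as a route but treat it as unavailable in $\C^1$ or as requiring a Pliss-time horseshoe construction. In fact the paper proves exactly this inequality (Theorem~\ref{t.Flowboundary}), and the step that makes it elementary and purely $\C^1$ is Lemma~\ref{l.localvolume}: because $D$ is tangent to the forward-invariant $F$-cone, $f^n(D\cap B_n(x,\varepsilon))$ stays tangent to the \emph{local} $F$-cone inside a ball of radius $\varepsilon$, so its volume is bounded by a uniform constant $K_0$ regardless of $n$. Covering $D$ by dynamical balls then gives $\Leb(f^n(D))\le K_0\,r_n(D,\varepsilon)$ and hence $h_{top}(g)\ge v_F$, with no Yomdin-type input and no need for $f^n(D\cap B_n(x,\varepsilon))$ to be ``almost a ball'' (this is precisely the flaw in \cite{SX10} that the paper corrects in Remark~\ref{r.SX}). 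Your alternative route --- extract a weak-$*$ limit of normalized area measures on $\phi_t(D)$ and apply Katok's theorem because it has a positive exponent --- is unsound: a positive Lyapunov exponent does not give positive metric entropy, and Katok's theorem only produces horseshoes with entropy approximating that of a \emph{given} hyperbolic measure, so it cannot manufacture positive entropy from a zero-entropy measure. The paper instead applies the variational principle inside each shrinking neighborhood $U_i$ to find ergodic $\mu_i$ with $h_{\mu_i}\ge -\log\lambda-1/i$, and uses entropy expansiveness (upper semicontinuity of $\mu\mapsto h_\mu$) to pass to a weak-$*$ limit supported on $\Lambda$. Finally, when $\dim F^{cu}>2$ the disk $D$ must have dimension $\dim F^{cu}$, not $2$; the covering and volume bounds are dimension-specific, and sectional expansion does give $\dim F^{cu}$-volume growth.
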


The positive topological entropy of Lorenz-like class implies the existence of non-trivial homoclinic class.

\begin{maincor}\label{main.B}

Let $\Lambda$ be a Lorenz-like class of a $\C^1$ flow $\phi_t$, then $\Lambda$ contains a non-trivial homoclinic class.
Moreover, if $\phi_t$ belongs to a $\C^1$ residual subset of flows, then $\Lambda$ is an attractor.

\end{maincor}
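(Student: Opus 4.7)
My plan is to deduce both assertions from Theorem~\ref{main.A} together with standard $C^1$-generic dynamics. By Theorem~\ref{main.A}, $h_{\mathrm{top}}(\phi_1|_\Lambda)>0$, so the variational principle produces an ergodic $\phi_t$-invariant probability measure $\mu$ on $\Lambda$ with $h_\mu(\phi_1)>0$. Each singularity is an isolated fixed point and so contributes zero entropy, forcing $\mu(\Sing(\phi_t))=0$; hence $\mu$-a.e.\ point is regular. At any such regular $x$ the dominated splitting $E^s(x)\oplus F^{cu}(x)$ is defined, and I can read off the Lyapunov exponents: along $E^s$ uniform contraction gives strictly negative exponents, while inside $F^{cu}$ the flow direction $X(x)$ provides the unique zero exponent, since for any unit $v\in F^{cu}(x)$ transverse to $X(x)$ the sectional-expansion hypothesis applied to $\sp\{X(x),v\}$ forces the exponent of $v$ to be strictly positive. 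Consequently $\mu$ is a hyperbolic ergodic measure.

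With $\mu$ hyperbolic I invoke a Katok-type horseshoe construction tailored to the $C^1$ sectional-hyperbolic setting, using the entropy expansiveness of $\phi_t$ in a neighborhood of $\Lambda$ supplied by Theorem~\ref{main.A}; this is the point at which the Liao--Viana--Yang machinery in~\cite{LVY} substitutes for classical $C^{1+\alpha}$ Pesin theory. The output is, inside any prescribed neighborhood of $\supp\mu\subset\Lambda$, a topological horseshoe $\Gamma$ carrying hyperbolic periodic orbits of stable index $\dim E^s$ together with transverse homoclinic intersections. Shrinking the neighborhood inside the $U_i$ from the Lyapunov-stability condition~(b) and using that $\Lambda$ is a chain recurrent class, I argue that the resulting periodic orbit $p$ is chain-related to $\Lambda$ and therefore lies in $\Lambda$; hence $H(p)$ is a non-trivial homoclinic class contained in $\Lambda$, proving the first assertion.

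For the attractor statement, I restrict to a residual $\R\subset\mathfrak{X}^1$ on which (i) the chain recurrent class of any hyperbolic periodic orbit $p$ coincides with its homoclinic class $H(p)$, via the Bonatti--Crovisier connecting lemma adapted to flows, and (ii) every Lyapunov stable chain recurrent class that contains a hyperbolic periodic orbit is an attractor, as in the generic theory in the spirit of Morales--Pacifico. For $\phi_t\in\R$, the preceding step furnishes $p\subset\Lambda$, so (i) gives $\Lambda=H(p)$ and (ii) together with hypothesis~(b) concludes that $\Lambda$ is an attractor. The main obstacle in this plan is the middle paragraph: Katok's original horseshoe theorem demands $C^{1+\alpha}$ regularity and cannot be used directly, and its $C^1$ substitute succeeds only because Theorem~\ref{main.A} simultaneously supplies the entropy expansiveness and the dominated splitting required by~\cite{LVY}; a secondary subtlety is confining the approximating horseshoe to $\Lambda$ itself rather than to a neighboring chain class, which is precisely where the Lyapunov stability~(b) is used a second time.
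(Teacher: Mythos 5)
Your plan follows the paper's proof almost exactly: Theorem~\ref{main.A} plus the variational principle yields an ergodic $\phi_t$-invariant measure $\mu$ on $\Lambda$ with positive entropy; one verifies that $\mu$ is non-trivial and hyperbolic of index $\dim E^s$; a $\C^1$ Katok-type horseshoe construction (Theorem~\ref{t.lowercontinuiation} and Corollary~\ref{c.homoclinicclass}, proved through Theorem~\ref{t.shadowing}) places a non-trivial homoclinic class inside $\Lambda$; and a residual-genericity statement (Theorem~\ref{t.isolated}, built on the Bonatti--Crovisier connecting lemma) upgrades this to an attractor.

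Two small slips are worth correcting. First, you assert that each singularity is an isolated fixed point, but the paper explicitly does \emph{not} assume the singularities to be hyperbolic or isolated---Lemma~\ref{l.singularitiesexpansive} is precisely about center segments of non-isolated singularities. Fortunately your desired conclusion $\mu(\Sing(X))=0$ still holds: any ergodic measure concentrated on the fixed-point set is a Dirac mass at a single singularity, hence has zero entropy, so a positive-entropy ergodic measure must be non-trivial. Second, you credit the $\C^1$ substitute for Pesin/Katok theory to the Liao--Viana--Yang machinery of~\cite{LVY}, but that reference supplies only the almost-entropy-expansiveness criterion used to prove Theorem~\ref{main.A}. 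The horseshoe construction in the $\C^1$ setting rests instead on S.~T.~Liao's shadowing lemma for the scaled linear Poincar\'e flow~\cite{L89}, packaged in the paper as Theorem~\ref{t.shadowing}; what makes it work is the dominated splitting (and the scaled-cocycle estimates), not entropy expansiveness.
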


Combining the result of~\cite{SGW}, one can get the following description for star flows:

\begin{maincor}\label{main.C}

There is a $\C^1$ residual subset $\R$ of star flows, such that for any star flow $\phi_t\in \cR$, $\phi_t$ has finitely many
attractors, whose basins cover an open and dense subset of the ambient manifold.

\end{maincor}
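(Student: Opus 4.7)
The plan is to combine Corollary~\ref{main.B} with the generic dichotomy for star flows of~\cite{SGW} and to harvest finiteness and density from standard $C^1$-generic Conley theory. First I would fix four $C^1$ residual subsets of the space of flows: $\cR_{B}$ from Corollary~\ref{main.B} on which every Lorenz-like class is an attractor; $\cR_{\text{SGW}}$ from~\cite{SGW} on which every non-trivial Lyapunov stable chain recurrent class of a star flow is Lorenz-like; the Kupka-Smale residual set $\cR_{\text{KS}}$; and the classical residual set $\cR_{\omega}$ on which there exists a residual subset $R_{\phi}$ of the ambient manifold such that $\omega(x)$ is contained in some Lyapunov stable chain recurrent class for every $x\in R_\phi$ (a consequence of Conley theory and generic approximation of $\omega$-limits by quasi-attractors). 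The desired set $\cR$ is the intersection of these four with the open set of star flows.

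For $\phi_t\in\cR$ the classification of Lyapunov stable chain recurrent classes becomes clean. A trivial such class is a single critical element which, by $\cR_{\text{KS}}$ and the star condition, must be a hyperbolic sink. A non-trivial such class is Lorenz-like by $\cR_{\text{SGW}}$ and hence an attractor by $\cR_{B}$. In either case the class has a non-empty open basin. The genericity supplied by $\cR_{\omega}$ places a residual subset of the manifold inside the union $W$ of these basins, and since $W$ is open it is open and dense.

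The step I expect to be the main obstacle is finiteness of the attractors. My first approach would exploit the sectional-hyperbolic splitting $E^s\oplus F^{cu}$ together with the star hypothesis: if there were infinitely many pairwise disjoint attractors $\Lambda_n$, one could extract a Hausdorff limit $\Lambda_\infty$ still carrying a dominated splitting, and critical elements trapped inside the $\Lambda_n$ would accumulate on $\Lambda_\infty$; a controlled $C^1$-small perturbation supported near this accumulation should produce a non-hyperbolic critical element for a nearby flow, contradicting the star definition. If that perturbation is technically awkward, I would fall back on the finiteness of chain recurrent classes for $C^1$-generic star flows available in the star-flow literature (\cite{SGW}, cf.~also~\cite{LGW, ZGW}), from which finiteness of attractors is immediate. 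Together with the open-dense conclusion of the previous paragraph this completes the argument.
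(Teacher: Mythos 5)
Your proposal is correct and matches the paper's argument: the paper takes $\cR=\cR_1\cap\cR_2\cap\cR_3$, where $\cR_1$ is the residual of Theorem~\ref{t.Lorenzlike} (your $\cR_{\text{SGW}}$), $\cR_2$ is the residual of Theorem~\ref{t.isolated} underlying Corollary~\ref{main.B} with Kupka--Smale already built in (your $\cR_B\cap\cR_{\text{KS}}$), and $\cR_3$ is the residual from~\cite{MPa02} making the union of basins of Lyapunov stable chain recurrent classes open and dense (your $\cR_\omega$). You are in fact more careful than the paper on the one delicate point, finiteness of the attractors, which the paper dismisses with an unelaborated ``in particular''; your fallback of invoking the finiteness of chain recurrent classes for $C^1$-generic star flows is the clean way to close that step, and is safer than your first perturbation sketch, which would need more work to produce an actual non-hyperbolic critical element from the accumulation.
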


We also obtain regularity of the topological entropy for Lorenz like classes which are quasi attractors, where 
an invariant compact set $\Lambda$ is {\it quasi attractor} if for any neighborhood $V$ of $\Lambda$, 
there is compact set $U\subset V$ such that $\phi_t(U)\subset int(U)$ for any $t>0$. 
The continuity of topological entropy for 1-dimensional Lorenz maps was proved in~\cite{Mal82, Shl88}.

\begin{main}\label{main.D}

Let $\Lambda$ be a Lorenz-like class of a $\C^1$ flow $\phi_t$ with all singularities hyperbolic. Suppose
$\Lambda$ is a quasi attractor, then 
there is an open neighborhood $U$ of $\Lambda$ and a neighborhood $\cU$ of $\phi_t$ such that the topological entropy
$h_{top}(. |_{U})$ varies continuously respect to the flows in $\cU$.

\end{main}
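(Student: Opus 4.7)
The plan is to establish both upper and lower semi-continuity of $\psi\mapsto h_{top}(\psi|_U)$ at $\phi_t$ separately. First I would fix the geometric setup: by the quasi-attractor property, choose a compact neighborhood $U$ of $\Lambda$ with $\phi_t(U)\subset\operatorname{int}(U)$ for every $t>0$, small enough that Theorem~A gives entropy expansiveness on $U$. By $C^1$-continuity of the flow, $U$ remains a trapping region for every $\psi_t$ in a $C^1$ neighborhood $\cU$ of $\phi_t$. Shrinking $\cU$ and using the $C^1$-openness of dominated splittings, of the sectional-expansion estimates, and of hyperbolicity of singularities (this is where the hyperbolic-singularities assumption is used crucially), I may assume every $\psi_t\in\cU$ is sectional-hyperbolic on $\bigcap_{s\ge 0}\psi_s(U)$ with uniform constants and that all its singularities in $U$ are hyperbolic of the same index pattern as those of $\phi_t$.

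For the upper bound, rerunning the proof of Theorem~A with these uniform constants yields an entropy-expansiveness constant $\varepsilon_0>0$ that works simultaneously for every $\psi_t\in\cU$. Applying Bowen's upper semi-continuity theorem to the time-one maps, together with the variational principle, then gives
$$\limsup_{\psi\to\phi_t}h_{top}(\psi|_U)\ \leq\ h_{top}(\phi_t|_U).$$

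For the lower bound, I would approximate $h_{top}(\phi_t|_U)$ from below by topological entropies of uniformly hyperbolic basic sets contained in $U$. Since every singularity of $\phi_t$ in $U$ is hyperbolic (hence contributes zero entropy), and the sectional expansion of $F^{cu}$ forces every ergodic measure not supported on $\Sing(\phi_t)$ to have one negative Lyapunov exponent from $E^s$ and at least one positive one in $F^{cu}$, a Katok-type approximation adapted to the sectional-hyperbolic setting (in the spirit of~\cite{LVY}) produces, for every $\delta>0$, a uniformly hyperbolic basic set $\Gamma\subset U$ with $h_{top}(\phi_t|_\Gamma)>h_{top}(\phi_t|_U)-\delta$. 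Uniform hyperbolicity furnishes a $C^1$-continuation $\Gamma_\psi\subset U$ with the same topological entropy for every $\psi_t$ in a possibly smaller $C^1$ neighborhood of $\phi_t$, and letting $\delta\to 0$ gives $\liminf_{\psi\to\phi_t}h_{top}(\psi|_U)\geq h_{top}(\phi_t|_U)$.

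The main obstacle is the Katok-type horseshoe approximation in low regularity and in the presence of singularities: the classical Katok theorem requires $C^{1+\alpha}$, and even when applicable one must ensure that the measures capturing most of the entropy are not concentrated arbitrarily close to $\Sing(\phi_t)$, where the sectional estimates degenerate in the flow direction. Overcoming this requires combining the robust dominated/sectional-hyperbolic estimates with Liao-type shadowing arguments to bypass Pesin theory, and showing that long excursions near the hyperbolic singularities account for negligible entropy (again using hyperbolicity of $\Sing(\phi_t)$ and the entropy-expansiveness established in Theorem~A). Once horseshoes of near-maximal entropy are constructed inside $U$, their persistence under $C^1$-perturbation closes the lower semi-continuity step.
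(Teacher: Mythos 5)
Your proposal matches the paper's proof essentially step for step: robust entropy expansiveness (from the second part of Proposition~\ref{p.lorenzexpansive}, which is where hyperbolicity of the singularities enters) combined with Lemma~\ref{l.flowrobustranstive} for upper semi-continuity, and a $C^1$ Katok-type horseshoe approximation bypassing Pesin theory via Liao's shadowing (Theorems~\ref{t.lowercontinuiation} and~\ref{t.shadowing}) plus continuation of horseshoes for lower semi-continuity. The only presentational point worth tightening is that the lower semi-continuity argument must be run at every $\psi\in\cU$ (not just at $\phi_t$) to conclude continuity throughout $\cU$; the paper does this by producing, for each $\phi'_t\in\cU$, a non-trivial hyperbolic maximal measure from Theorem~\ref{t.Flowboundary} and Lemma~\ref{l.flowexpansive}, and then applying Theorem~\ref{t.lowercontinuiation} there.
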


\begin{remark}
Let $U$ be an attracting compact set, i.e., $\phi_t(U)\subset int(U)$, and $\Lambda$ the maximal invarint set of $\phi_t|_U$. Suppose
$\Lambda$ is sectional hyperbolic and not necessary to be chain recurrent. Then it is easy to check that our proofs for Theorems~\ref{main.A} and~\ref{main.D} still work.
\end{remark}

The first part of Theorem~\ref{main.A}, as well as Corollary~\ref{main.B}, exist an alternative proof, which 
was announced at the International Conference on Dynamical Systems held at IMPA, Rio de Janeiro in November 2013. A video of
this lecture is available at the conference website~\cite{video}. Similar methods and results to that lecture
appeared in a preprint circulated by A. Arbieto, C.A. Morales, A.M. Lopez B in August 2014.

Now let us explain quickly how the entropy theory is used in the proof:
We study the time-one map of the flow in a neighborhood of Lorenz-like class. The advantage to take the time-one 
diffeomorphism is because, we may make use of `fake foliations', which comes from the dominated splitting of 
the time-one map, and is in general not preserved by the flow.
Making use of the fake foliation and a new criterion for entropy expansiveness of~\cite{LVY}, we prove that
the time-one map is entropy expansive in a small neighborhood of this Lorenz-like class. In particular,
the metric entropy is upper semi-continuous in this neighborhood.

Then we show a lower bound of topological entropy for diffeomorphisms admitting a dominated
splitting, which enables us to prove that, the topological entropy for flow in any small neighborhood of this Lorenz
like class is uniformly bounded from zero. Taking a sequence of neighborhoods which converge to the Lorenz-like
class, and combining the upper semi-continuity of the entropy, we prove that the topological entropy on this Lorenz-like class is
always positive.

This paper is organized as follows. In Section~\ref{s.entropy} we prove Theorem A. And
in Section~\ref{s.lowercontinuiation}, we prove the lower semi-continuation of topological entropy and
finish the proof of Corollaries~\ref{main.B},~\ref{main.C} and Theorem~\ref{main.D}.

\section{Preliminary}
Throughout this article, $X$ denotes a $\C^1$ vector field on a $d$-dimensional 
closed manifold $M^d$, $\Sing(X)$ the singularities of $X$, 
$\phi_t$ the flow generated by $X$, and $f$ the corresponding time-one map, i.e., $f=\phi_1$. We also denote the 
corresponding tangent flow by $\Phi_t=d\phi_t: TM^d\rightarrow TM^d$.

\subsection{Dominated splitting\label{ss.dominated splitting}}

Throughout this subsection, $g\in \Diff^1(M)$ is a diffeomorphism which admits a {\it dominated splitting} 
$E\oplus F$ on the tangent space, i.e., there exists $L\in \mathbb{N}$ such that for every $x \in M$, and every pair of non-zero 
vectors $u\in E(x)$ and $v\in F(x)$, one has
$$\frac{\|Dg_x^L(u)\|}{\|u\|}\leq \frac{1}{2}\frac{\|Dg^L_x(v)\|}{\|v\|}.$$

For $a>0$ and $x\in M$, we define $(a, F)$-cone on the tangent space $T_xM$:
$$\cC_a(F_x)=\{v; v=0 \; \text{or}\; v=v_E+v_F\;\; \text{where}\;\; v_E\in E;\; v_F\in F\;\; \text{and}\;\; \frac{\|v_E\|}{\|v_F\|}<a.\}$$  
When $a$ is sufficiently small, the cone fields $\cC_a(F_x)$, $x\in M$ is forward 
invariant, i.e., there is $\lambda<1$ such that for any $x\in M$, $Dg_x(\cC_a(F_x))\subset \cC_{\lambda a}(g(x))$.
In a similar way, we may define the $(a,E)$-cone $\cC_a(E_x)$, which is backward invariant.
When no confusing is caused, we call the two families of cones by $F$ cone and $E$ cone.

The images of the cones under the exponential map are also forward or backward invariant. The invariance 
is induced from the invariance of the $E$ and $F$ cones on the tangent space. 
More precisely: Fix $\vep_0>0$ small enough, such that the exponential map is well defined 
on the $\vep_0$ ball in the tangent space. The image of the exponential map restricted 
on the set $B_{\vep_0}(0)\cap \cC_a(F_x)\subset T_xM$ defines a {\it local $F$ cone} in 
$B_{\vep_0}(x)$, which we denote by $\C_a(F_x)$. Then for any $x\in M$, we have:
$$g\big(\C_a(F_x)\cap B_{\frac{\vep_0}{\|f\|_{\C^1}}}(x)\big)\subset \C_{\lambda a}(F_{f(x)}).$$
In the same way we define $\C_a(E_x)$.

\begin{definition}\label{l.integrable}
Let $D$ be a $\C^1$ disk with dimension $\dim(E)$. We say $D$ is:
\begin{itemize}
\item {\it tangent to} {\it $F$ cone} if for any $x\in D$, $T_xD\subset \cC_a(F_x)$;
\item {\it tangent to} {\bf local} {\it $F$ cone at $x$} if $D\subset \C_a(F_x)$; 
\item {\it tangent to} {\bf local} {\it $F$ cone} if for any $y\in D$ we have $D\subset \C_a(F_y)$.
\end{itemize}
\end{definition}

$D$ is tangent to local $F$ cone implies that it is tangent to $F$ cone. Conversely, if $D$ 
is tangent to $F$ cone, then it can be divided into finitely many sub-disks, each sub-disk 
is tangent to local $F$ cone. 

\begin{remark}\label{r.transverse}

Topologically, the local cones $\C_a(E_x)$ and $\C_a(F_x)$ for $x\in M$ are transverse to 
each other, that is, $\C_a(E_x)\cap \C_a(F_x)=\{x\}$.

\end{remark}

\begin{remark}\label{r.localtanency}

Suppose $D$ is a disk with dimension $\dim(F)$ and transverse to $E$ bundle, then there is $n>0$ 
sufficiently large, such that $g^n(D)$ is tangent to $F$ cone. Hence, it can be divide into finitely 
connected pieces $g^n(D)=\cup_{i=1}^l D_i$, such that each piece $D_i$ is tangent to local $F$ cone.
\end{remark}

\begin{lemma}\label{l.onesteplocalvolume}

There is $K_0>0$ such that for any $x\in M$ and any disk $D\subset B_{\vep_0}(x)$ which is tangent to the local 
$F$ cone, $\Leb(D)\leq K_0$.

\end{lemma}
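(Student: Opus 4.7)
The plan is to project the disk $D$ orthogonally onto $F(x)$ inside the exponential chart at $x$; the cone hypotheses should make this projection injective with Jacobian uniformly bounded from below, reducing the volume of $D$ to that of a Euclidean ball of radius $\vep_0$ inside the $\dim(F)$-dimensional subspace $F(x)$.

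First I would shrink $\vep_0$ once and for all, using compactness of $M$ and continuity of the subbundles $E,F$, so that for every $x\in M$ the map $\exp_x\colon B_{\vep_0}(0)\to M$ is a diffeomorphism with bounded metric distortion (say, in $[1/2,2]$), and so that for every $y\in B_{\vep_0}(x)$ the subspaces $E(y), F(y)$ (transported into $T_xM$ via $\exp_x$) deviate from $E(x), F(x)$ by an angle much smaller than the cone width $a$. With this choice, a cone condition at a nearby point $y$ is equivalent, modulo a slight widening, to the corresponding condition at $x$; in particular $\C_a(F_y)$ and $\C_a(F_x)$ agree up to passing to a cone of width $a'>a$ arbitrarily close to $a$.

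Now fix $D\subset B_{\vep_0}(x)$ tangent to the local $F$ cone, and identify $D$ with $\tilde D\subset B_{\vep_0}(0)\subset T_xM$ via $\exp_x^{-1}$. Let $\pi\colon T_xM\to F(x)$ be the orthogonal projection. The tangency hypothesis $T_y\tilde D\subset\cC_a(F(y))$, combined with the angle control above, yields a lower bound $|\Jac(\pi|_{\tilde D})|\ge c(a)>0$ depending only on the cone width. For injectivity, suppose $\tilde y_1,\tilde y_2\in\tilde D$ have $\pi(\tilde y_1)=\pi(\tilde y_2)$, so that $\tilde y_2-\tilde y_1\in E(x)$. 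The containment hypothesis $D\subset\C_a(F_{y_1})$ forces $\exp_{y_1}^{-1}(y_2)\in\cC_a(F(y_1))$; matching the two charts up to $O(\vep_0)$-relative errors and using $F(y_1)\approx F(x)$ gives $\tilde y_2-\tilde y_1\in\cC_{a'}(F(x))$ for a slightly larger $a'$. Since every nonzero vector in $\cC_{a'}(F(x))$ has nonzero $F(x)$-component, it cannot lie in $E(x)$; this forces $\tilde y_1=\tilde y_2$.

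With injectivity in hand, $\pi(\tilde D)$ is contained in the Euclidean ball of radius $\vep_0$ in $F(x)$, whose volume is bounded by a constant depending only on $\vep_0$ and $\dim(F)$. The area formula, together with the distortion bound for $\exp_x$, then produces a uniform bound $\Leb(D)\le K_0$ independent of $x$ and $D$. The main obstacle is the injectivity step: bounded multiplicity would already suffice to conclude, but the cleanest argument goes via global injectivity of $\pi|_{\tilde D}$, and this is precisely where the preliminary uniform smallness of $\vep_0$ (so that $E(y), F(y)$ are nearly constant on $B_{\vep_0}$-balls and the two exponential charts agree up to small errors) is essential.
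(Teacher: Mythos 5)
Your approach---projecting $D$ transversely to $F$ inside the exponential chart and controlling the Jacobian and multiplicity via the cone condition---is essentially the paper's, which realizes the same projection as a holonomy along a smooth foliation tangent to the $E$ cone onto a reference $F$-disk $D_x$. One slip in the injectivity step: the kernel of the orthogonal projection $\pi\colon T_xM\to F(x)$ is $F(x)^\perp$, not $E(x)$, so $\pi(\tilde{y}_1)=\pi(\tilde{y}_2)$ gives $\tilde{y}_2-\tilde{y}_1\in F(x)^\perp$ rather than $\tilde{y}_2-\tilde{y}_1\in E(x)$; you can either replace $\pi$ by the oblique projection onto $F(x)$ along $E(x)$ (whose kernel really is $E(x)$), or keep $\pi$ and observe that $\cC_{a'}(F(x))\cap F(x)^\perp=\{0\}$ as soon as $a'<1$, since $v=v_E+v_F\perp F(x)$ with $v_F\neq 0$ forces $\langle v_E,v_F\rangle=-\|v_F\|^2$ and hence $\|v_E\|\geq\|v_F\|$.
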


\begin{proof}

Take a smooth foliation $\cF$ in $B_{\vep_0}(x)$ such that every leaf has dimension $dim(E)$ and is 
tangent to $E$ cone. Fix a disk $D_x\ni x$ which is tangent to $F$ cone, for example, we may choose 
the image of $exp_x(B_{\vep_0}(x)\cap F_x)$. By Remark~\ref{r.localtanency}, each leaf of $\cF$ 
intersects $D$ with at most one point. Hence the foliation $\cF$ induces a holonomy map 
$H: D\rightarrow D_x$. Because both $D$ and $D_x$ are tangent to $F$ cone, the Jacobian of $H$ is 
uniformly bounded from above and below. Observe that the volume of $D_x$ is uniformly bounded from 
above, the volume of $D_x$ is bounded by a uniform constant $K_0>0$.

\end{proof}

By the invariance of local cones and induction, it is easy to prove the following lemma:

\begin{lemma}\label{l.localtangency}

For any $x\in M$, $\vep<\frac{\vep_0}{\|g\|_{\C^1}}$ and $n>0$, if $D\subset B_{\vep}(x)$ is tangent 
to local $F$ cone (at $x$) and $g^i(D)\subset B_{\vep}(g^i(x))$ for every $0\leq i \leq n$, 
then $g^n(D)$ is tangent to local $F$ cone (at $g^n(x)$).

\end{lemma}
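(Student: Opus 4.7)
The plan is to proceed by a straightforward induction on $n$, feeding the one-step invariance property of local cones (stated just before the definition of local $F$-cone) into itself $n$ times. The hypothesis $\vep<\vep_0/\|g\|_{\C^1}$ is there precisely so that at every step we stay inside the domain $B_{\vep_0/\|g\|_{\C^1}}(g^i(x))$ on which the one-step invariance statement $g\bigl(\C_a(F_y)\cap B_{\vep_0/\|g\|_{\C^1}}(y)\bigr)\subset \C_{\lambda a}(F_{g(y)})$ applies.

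For the base case $n=0$ there is nothing to prove, since by assumption $D\subset\C_a(F_x)$. For the inductive step, suppose we have shown that $g^i(D)$ is tangent to the local $F$-cone at $g^i(x)$, i.e.\ $g^i(D)\subset \C_a(F_{g^i(x)})$. By hypothesis $g^i(D)\subset B_\vep(g^i(x))\subset B_{\vep_0/\|g\|_{\C^1}}(g^i(x))$, so the one-step invariance gives
\[
 g^{i+1}(D)=g(g^i(D))\subset g\bigl(\C_a(F_{g^i(x)})\cap B_{\vep_0/\|g\|_{\C^1}}(g^i(x))\bigr)\subset \C_{\lambda a}(F_{g^{i+1}(x)})\subset \C_a(F_{g^{i+1}(x)}),
\]
which is exactly the statement that $g^{i+1}(D)$ is tangent to the local $F$-cone at $g^{i+1}(x)$. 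Iterating up to $i=n-1$ gives the conclusion, and as a bonus one actually obtains tangency to the shrunken cone $\C_{\lambda^n a}(F_{g^n(x)})$.

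There is really no substantive obstacle here; the only thing to be careful about is bookkeeping the radius condition, namely verifying that $g^i(D)$ lies inside the ball of radius $\vep_0/\|g\|_{\C^1}$ around $g^i(x)$ at each step of the induction. This is immediate from the hypothesis $g^i(D)\subset B_\vep(g^i(x))$ together with the choice $\vep<\vep_0/\|g\|_{\C^1}$, so the induction goes through without change. The invariance of the cones on the tangent space (with contraction rate $\lambda<1$) is what ultimately makes the exponential-map version invariant, and no further estimate beyond what is already collected before the lemma is required.
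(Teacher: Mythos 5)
Your proof is correct and follows exactly the route the paper indicates: the paper itself dispatches this lemma with the single remark that it follows ``by the invariance of local cones and induction,'' which is precisely the inductive application of the one-step invariance $g\bigl(\C_a(F_y)\cap B_{\vep_0/\|g\|_{\C^1}}(y)\bigr)\subset \C_{\lambda a}(F_{g(y)})$ that you carry out. Nothing to add.
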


\begin{definition}\label{d.Fexpansion}

Let $D$ be a disk tangent to the $F$ cone, then the {\it volume expansion of $D$}, $v_F(D)$, is defined by $\limsup_{n}\frac{1}{n}\log(\Leb(g^n(D)))$. The {\it volume expansion $v_F$ of bundle $F$}, is defined by:
$$v_F=\sup\{v_F(D);\;\; D\;\; \text{is tangent to the}\;\; F\;\; \text{cone}\}.$$ 

\end{definition}

\subsection{Entropy}
Let $g : M \rightarrow M$ be a continuous map on a compact metric space $M$, 
$K$ be a subset of $M$ not necessary to be invariant. For each $\vep > 0$ and 
$n \geq 1$, we consider the the dynamical ball of radius $\vep > 0$ and length 
$n$ around $x \in M$:
$$
B_n(x,\vep)=\{y\in M ; d(g^j(x),g^j(y)) \leq \vep \; \text{for every}\; 0 \leq j < n \}.
$$

A set $E \subset M$ is {\it ($n,\vep$)-spanning} for $K$ if for any $x\in K$ 
there is $y\in E$ such that $d(g^i(x),g^i(y))\leq \vep$ for all $0 \leq i < n$. 
In other words, the dynamical balls $B_n(y,\vep), y\in E$ cover $K$. Let 
$r_n(K,\vep)$ denote the smallest cardinality of any ($n,\vep$)-spanning set, and
$$
r(K,\vep) = \limsup_{n\rightarrow +\infty}\frac{1}{n}\log r_n(K,\vep).
$$

The topological entropy of $g$ on $K$ is defined by $h_{top}(g, K) = \lim_{\vep\rightarrow 0} 
r(K, \vep)$. And the topological entropy of $f$ is defined as $h_{top}(g)=h_{top}(g,M)$. 
The topological entropy of a continuous flow equals to the topological entropy of its 
time-one map.

Let $\mu$ be an invariant measure and $\cA$ a finite partition, the metric entropy of 
$\mu$ corresponding to the partition $\cA$ is defined as
$$
h_\mu(\cA)=\lim_n -\sum\mu(B_{n,i})\log \mu(B_{n,i}),
$$
where $B_{n,i}$ are elements of partition
$$
\cA_0^{n-1}=\cA \vee g^{-1}\cA \vee \dots \vee g^{1-n}\cA.
$$
The metric entropy of an invariant measure $\mu$ is defined as
$$
h_\mu=\sup_{\cA\; \text{finite partition}}\{h_\mu(\cA)\}.
$$

By the variational principle, $h_{top}(g)=\sup_{\mu\in \cM_{inv}(g)}h_\mu$, where 
$\cM_{inv}(g)$ denotes the space of invariant probability of $g$. Metric entropy is not 
necessary to be upper semi-continuous depending on the invariant measures, and the topological 
entropy may not be achieved by any metric entropy, e.g., see~\cite{New89,DN05}.

For each $x \in M$ and $\vep > 0$, let $B_\infty(x,\vep)=\{y: d(g^n(x),g^n(y))\leq\vep$ 
for $n\in \mathbb{Z}\}$. $g$ is ($\vep$-){\it entropy expansive} if
$$
\sup_{x\in M}h_{top}(g,B_\infty(x,\vep)) = 0.
$$

For $\vep>0$, we say $g$ is $\vep$-{\it almost entropy expansive} if for any invariant 
ergodic measure $\mu$ of $g$ and $\mu$ almost every point $x\in M$, we have 
$ h_{top}(g, B_\infty(x,\vep))=0.$

A new criterion of entropy expansiveness was given in \cite{LVY}[Proposition 2.4]:

\begin{lemma}\label{l.entropyexpansive}

$g$ is $\vep$-almost entropy expansive if and only if it is $\vep$-entropy expansive.

\end{lemma}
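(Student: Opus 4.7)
The only if direction is immediate: if $\sup_{x\in M} h_{top}(g, B_\infty(x,\vep)) = 0$, then every point (and hence $\mu$-a.e.\ point for any invariant measure) has vanishing local entropy. The substance of the lemma is the converse, which I will prove by contradiction.

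Suppose $g$ is $\vep$-almost entropy expansive but $\sup_x h_{top}(g, B_\infty(x,\vep)) = \alpha > 0$. Set $\Psi(x) := h_{top}(g, B_\infty(x,\vep))$. A first observation is that $\Psi$ is $g$-invariant: since $y \in B_\infty(x,\vep)$ if and only if $gy \in B_\infty(gx,\vep)$, the map $g$ restricts to a homeomorphism from $B_\infty(x,\vep)$ onto $B_\infty(gx,\vep)$ that conjugates the dynamics, so $\Psi(gx) = \Psi(x)$. In particular $\{\Psi>0\}$ is a $g$-invariant Borel set. Pick $x_0\in M$ with $\Psi(x_0)\geq \alpha/2$.

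Using the definition of $h_{top}$ on the (not necessarily invariant) set $B_\infty(x_0,\vep)$, choose $0<\vep'<\vep/4$, $n_k\to\infty$, and $(n_k,\vep')$-separated subsets $E_k\subset B_\infty(x_0,\vep)$ with $|E_k|\geq \exp(n_k\alpha/4)$. Apply the standard Misiurewicz empirical-measure construction: form $\mu_k = \frac{1}{|E_k|}\sum_{y\in E_k}\frac{1}{n_k}\sum_{i=0}^{n_k-1}\delta_{g^i y}$ and extract a weak-$\ast$ limit $\mu$. Then $\mu$ is $g$-invariant, and the partition-entropy computation familiar from the proof of the variational principle yields $h_\mu(g)\geq \alpha/4$. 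Since each $\mu_k$ is supported in $\bigcup_{i=0}^{n_k-1} g^i B_\infty(x_0,\vep)=\bigcup_{i=0}^{n_k-1} B_\infty(g^i x_0,\vep)$, the support of $\mu$ lies in the closed invariant set $K:=\overline{\bigcup_{i\in\ZZ} B_\infty(g^i x_0,\vep)}$. Choose an ergodic component $\nu$ of $\mu$ with $h_\nu(g)>0$.

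By $\vep$-almost entropy expansiveness applied to $\nu$, we have $\Psi=0$ $\nu$-almost everywhere, so by ergodicity of $\nu$ on the $g$-invariant set $\{\Psi>0\}$ the desired contradiction reduces to showing $\nu(\{\Psi>0\})>0$. The plan is to argue that for $\nu$-typical $y$ the orbit of $y$ remains inside $K$ and therefore shadows orbits of points in $B_\infty(x_0,\vep)$, so that the $(n_k,\vep')$-separated families in $B_\infty(x_0,\vep)$ can be transported by this shadowing to produce large $(n,\vep')$-separated families contained in $B_\infty(y,\vep)$. Combined with Katok's entropy formula applied to $\nu$, this would yield $\Psi(y)>0$ for $\nu$-a.e.\ $y$, contradicting the hypothesis. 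The main obstacle is precisely this transport step: moving separated orbits from $B_\infty(x_0,\vep)$ into $B_\infty(y,\vep)$ without degrading the separation scale $\vep'$ or expanding $\vep$, which requires delicate use of the local structure near $\nu$-generic points and is the heart of the argument in~\cite{LVY}.
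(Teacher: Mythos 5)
The easy direction is fine. The substantive direction, however, is not proved: you build an invariant measure $\mu$ with $h_\mu(g)\geq\alpha/4$ by the Misiurewicz empirical-measure construction from separated sets in $B_\infty(x_0,\vep)$ and pass to an ergodic component $\nu$ of positive entropy, but the step that would actually close the argument --- showing $\nu(\{\Psi>0\})>0$ --- is only announced as a ``plan'' and you explicitly concede it is the heart of the matter. That is not a proof; it is a statement of what would need to be proved. So there is a genuine gap.

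Moreover, the plan itself is doubtful as stated, for two concrete reasons. First, membership in $K=\overline{\bigcup_i g^i B_\infty(x_0,\vep)}$ carries no dynamical ``shadowing'' information: a $\nu$-generic $y$ may at time $n$ lie near $g^{i_n}(B_\infty(x_0,\vep))$ for some sequence $i_n$ bearing no relation to $n$, so there is no single orbit of $x_0$ that the orbit of $y$ tracks. Second, even in the most favourable case $y\in B_\infty(x_0,\vep)$, the only triangle inequality available gives $B_\infty(x_0,\vep)\subset B_\infty(y,2\vep)$; transporting the $(n_k,\vep')$-separated families from $B_\infty(x_0,\vep)$ into a Bowen ball around $y$ doubles the radius, which is fatal here because the lemma keeps the \emph{same} $\vep$ on both sides. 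Without a mechanism that does not lose scale, the contradiction cannot be reached by this route, and the Katok entropy formula does not by itself supply the needed localization of separated sets inside $B_\infty(y,\vep)$. In short: the invariance of $\Psi$ and the Misiurewicz construction are correct, but the argument stops exactly where the real work begins, and the sketch for completing it appears to fail on the scale-doubling issue.
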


The positivity of topological entropy of Theorem~\ref{main.A} depends on the
following theorem, whose proof, as well as ones for the Corollaries~\ref{c.Fentropyconjecture},~\ref{c.transtive}, will be
given in Section~\ref{s.positiveentropy}.

\begin{theorem}\label{t.Flowboundary}

Suppose $g$ is a diffeomorphism which admits a dominated splitting $E\oplus F$. Then $h_{top}(g)\geq v_F$.

\end{theorem}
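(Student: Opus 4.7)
The plan is to bound the volume-growth rate of any $F$-tangent disk by the topological entropy of $g$, via a Bowen-ball covering argument. The intuition is that each dynamical ball of radius $\vep$ at time $n$ can intersect the iterate $g^n(D)$ in a piece which, being tangent to the local $F$-cone and contained in a small ball, has volume at most $K_0$ by Lemma~\ref{l.onesteplocalvolume}; the total volume of $g^n(D)$ is therefore controlled by the number of $(n,\vep)$-Bowen balls needed to cover $M$.

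First I would reduce to the case where $D$ is tangent to the local $F$-cone in the strongest sense of Definition~\ref{l.integrable}, which is valid because by Remark~\ref{r.localtanency} every $F$-tangent disk decomposes into finitely many such sub-disks and $v_F(D)$ is the maximum of the volume-growth rates over the pieces. Fixing $\vep > 0$ with $\vep < \vep_0/\|g\|_{\C^1}$, let $E_n \subset M$ be an $(n+1,\vep/2)$-spanning set of minimum cardinality, and set $A_y = D \cap B_{n+1}(y,\vep/2)$ for $y \in E_n$. The family $\{A_y\}_{y\in E_n}$ covers $D$. Pick any base point $x_y \in A_y$ whenever $A_y$ is nonempty; by the triangle inequality, $g^i(A_y) \subset B_\vep(g^i(x_y))$ for all $0 \leq i \leq n$, and $A_y$ is tangent to the local $F$-cone at $x_y$ because $A_y \subset D$ is.

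Then Lemma~\ref{l.localtangency} ensures $g^n(A_y)$ is tangent to local $F$-cone at $g^n(x_y)$ and lies in $B_{\vep_0}(g^n(x_y))$, so Lemma~\ref{l.onesteplocalvolume} gives $\Leb(g^n(A_y)) \leq K_0$. Summing,
$$\Leb(g^n(D)) \;\leq\; \sum_{y \in E_n} \Leb(g^n(A_y)) \;\leq\; K_0\, r_{n+1}(M,\vep/2).$$
Taking $\tfrac{1}{n}\log$ and passing to $\limsup_{n\to\infty}$ gives $v_F(D) \leq r(M,\vep/2)$, and letting $\vep \to 0$ yields $v_F(D) \leq h_{top}(g)$. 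Taking the supremum over all such $D$ completes the proof.

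I do not expect a serious obstacle: the argument is essentially bookkeeping to verify that Lemma~\ref{l.localtangency} applies over the full length-$n$ orbit of $A_y$ and that the final iterate still fits inside the $\vep_0$-ball required by Lemma~\ref{l.onesteplocalvolume}. Both constraints are arranged by working at scale $\vep/2$ over length $n+1$ with $\vep$ small relative to $\vep_0/\|g\|_{\C^1}$. One mild subtlety worth flagging is that Lemma~\ref{l.localtangency} guarantees tangency at a specific center, so the argument needs a base point $x_y \in A_y$ rather than the spanning point $y$ itself; this is why $A_y \neq \emptyset$ is the right nonemptiness check to make throughout.
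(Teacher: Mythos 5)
Your argument is correct and takes essentially the same route as the paper, which covers $D$ by dynamical balls and bounds each piece's iterated volume by $K_0$ via Lemma~\ref{l.localvolume}; you have simply inlined the content of Lemma~\ref{l.localvolume} and been a bit more explicit about reducing to disks tangent to the local $F$-cone and about choosing a base point $x_y\in A_y$ rather than the spanning point itself. The only cosmetic difference is that you span $M$ where the paper spans $D$ (yielding the slightly sharper $h_{top}(g,D)\geq v_F-\delta$), but both immediately give the stated inequality.
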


Shub~\cite{Shu74} has conjectured an estimation of lower bound of topological entropy: 
the topological entropy is bounded from below by the spectral radius in homology 
(see also Shub, Sullivan~\cite{SS75}). 

Let $d = dim M$ and $g_{*,k} : H_k(M,\mathbb{R})\rightarrow 
H_k(M,\mathbb{R}), 0 \leq k \leq d$ be the action induced by $g$ on the real homology groups of $M$. Let
$$\sp(g_*) = \max_{0\leq k\leq d} \sp(g_{*,k}),$$
where $\sp(g_{*,k})$ denotes the spectral radius of $g_{*,k}$. The Shub entropy conjecture states that:

\begin{conjecture}\label{c.entropy conjecture}

For every $g\in \Diff^1(M)$, $h_{top}(g)\geq \log \sp(g_*) $.

\end{conjecture}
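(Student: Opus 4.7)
The plan is to use the paper's Theorem~\ref{t.Flowboundary} as the main engine, combined with a classical volume-growth interpretation of the spectral radius on homology due to Przytycki and Newhouse. Theorem~\ref{t.Flowboundary} converts volume growth of the dominating subbundle into topological entropy; the Przytycki--Newhouse estimate goes the other direction, bounding $\log \sp(g_{*,k})$ from above by the exponential growth rate of the $k$-dimensional volume of smooth $k$-disks under iteration of $g$. The strategy is to sandwich $\log \sp(g_{*,k})$ between these two quantities whenever $g$ carries a dominated splitting with $k=\dim F$, and then take the maximum over $k$.

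Concretely, I would proceed as follows. Fix $0\leq k\leq d$ and, representing a maximal-growth class in $H_k(M,\RR)$, choose a smooth closed $k$-cycle $D$ transverse to the $E$-bundle. By Remark~\ref{r.localtanency}, some iterate $g^{N}(D)$ is tangent to the $F$-cone and decomposes into finitely many pieces, each tangent to a local $F$-cone in the sense of Definition~\ref{l.integrable}. Applying Lemma~\ref{l.onesteplocalvolume} and the forward invariance of local cones, the volume growth of $D$ is therefore bounded above by $v_F$. On the other hand, Newhouse's inequality gives
\[
\limsup_{n\to\infty}\frac{1}{n}\log \Leb_k(g^n(D))\ \geq\ \log \sp(g_{*,k}).
\]
Combining the two yields $v_F\geq \log \sp(g_{*,k})$, and Theorem~\ref{t.Flowboundary} then produces $h_{top}(g)\geq v_F\geq \log \sp(g_{*,k})$. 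Taking the maximum over those $k$ arising as the dimension of a dominating bundle closes the argument in the dominated setting; this, I expect, is what Corollary~\ref{c.Fentropyconjecture} records.

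The main obstacle for the conjecture as stated is that it asks for the inequality for \emph{every} $g\in \Diff^1(M)$, with no hypothesis of a dominated splitting and no higher regularity. Without cone invariance one loses the link between the growth of an individual smooth $k$-disk and $v_F$: a smooth disk can be folded very efficiently under a $C^1$ map, so its volume growth need not be controlled by entropy at all. In the $C^\infty$ category Yomdin's theorem supplies exactly the missing quantitative control via semi-algebraic approximation, but those estimates are notoriously unavailable in $C^1$. For this reason I would only claim the conclusion conditionally on the existence of a dominated splitting of suitable dimension, and view the full $C^1$ statement as out of reach of the cone-field and volume-growth techniques developed in this paper.
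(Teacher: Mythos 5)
You have correctly recognized that the statement is labeled as a \emph{conjecture} in the paper and is not proved there: what the paper actually establishes is the partial result Corollary~\ref{c.Fentropyconjecture}, namely $h_{top}(g)\geq\log\sp(g_{*,\dim F})$ under the hypothesis that $g$ carries a dominated splitting $E\oplus F$, and the general $C^1$ case remains open. Your reconstruction of that partial result is essentially the paper's own argument. Both rest on the same two pillars: Theorem~\ref{t.Flowboundary}, giving $h_{top}(g)\geq v_F$; and the classical homology-vs-volume-growth estimate (the paper cites Yomdin, p.~287, while you invoke Przytycki--Newhouse, but these are interchangeable formulations of the same bound), combined with the observation that under iteration a chain transverse to $E$ becomes tangent to the $F$-cone, so its volume growth is controlled by $v_F$, yielding $v_F\geq\log\sp(g_{*,\dim F})$. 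Your closing paragraph also correctly diagnoses why the conjecture in full generality is out of reach of these techniques: without cone invariance a $C^1$ disk can fold too efficiently for its volume growth to be bounded by entropy, and the $C^\infty$ Yomdin machinery that saves the day in the smooth category is unavailable at $C^1$ regularity. One small point worth being explicit about: even "taking the maximum over $k$ arising as $\dim F$" does not recover $\log\sp(g_*)$ in general, since the relevant homological degree need not coincide with the dimension of any dominating subbundle; but you already flag this by claiming the conclusion only conditionally, which matches exactly what Corollary~\ref{c.Fentropyconjecture} records.
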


This conjecture has been proved for $C^\infty$ maps in~\cite{Yom87} and for diffeomorphims away from 
tangencies in~\cite{LVY}. The history and more references on the Shub entropy conjecture can be found in \cite{LVY}. 
As an immediate corollary of Theorem~\ref{t.Flowboundary}, we obtain a partial answer to the 
Shub entropy conjecture.

\begin{corollary}\label{c.Fentropyconjecture}

Suppose $g$ is a diffeomorphism which admits a dominated splitting $E\oplus F$, then 
\begin{equation}\label{eq.1}
h_{top}(g)\geq \log \sp(g_{*,dim(F)}).	
\end{equation}

\end{corollary}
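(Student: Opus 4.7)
The plan is to combine Theorem~\ref{t.Flowboundary} with the classical comparison between spectral radius on homology and exponential growth of $k$-volumes under iteration. Since Theorem~\ref{t.Flowboundary} already gives $h_{top}(g)\geq v_F$, the corollary reduces to showing
$$v_F \;\geq\; \log \sp(g_{*,\dim(F)}).$$
If $H_{\dim(F)}(M,\mathbb{R})=0$ there is nothing to prove, so assume otherwise.

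First I would invoke a Manning--Przytycki type estimate. Fix a basis $c_1,\dots,c_b$ of $H_{\dim(F)}(M,\mathbb{R})$ and represent each $c_i$ by a smooth $\dim(F)$-dimensional cycle $\sigma_i$ obtained by perturbing a chosen cellular representative. Because $\dim E+\dim F=\dim M$ and $E$ is a continuous subbundle, transversality of $T\sigma_i$ to $E$ is an open and dense condition on $C^1$ perturbations, so the $\sigma_i$ can be arranged to be transverse to the $E$ bundle without altering the homology class. Pairing the iterated cycles $g^n(\sigma_i)$ against harmonic $\dim(F)$-forms representing the dual basis yields a constant $C>0$ with
$$\|g_{*,\dim(F)}^{\,n}\| \;\leq\; C\,\max_{1\leq i\leq b}\Leb\bigl(g^n(\sigma_i)\bigr),$$
and therefore $\log\sp(g_{*,\dim(F)})\leq \limsup_n \tfrac{1}{n}\log \max_i \Leb(g^n(\sigma_i))$.

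Second, I would bound the right-hand side by $v_F$ using the geometry of the dominated splitting. Since each $\sigma_i$ is transverse to $E$, Remark~\ref{r.localtanency} produces $n_0\geq 0$ such that $g^{n_0}(\sigma_i)$ splits into finitely many sub-disks, each tangent to the local $F$-cone; Lemma~\ref{l.localtangency} then guarantees that this tangency persists under all further iterates of $g$. Applying Definition~\ref{d.Fexpansion} to each sub-disk and summing (the number of pieces is fixed, independent of $n$), one obtains
$$\limsup_{n\to\infty}\tfrac{1}{n}\log\Leb\bigl(g^n(\sigma_i)\bigr)\;\leq\; v_F\qquad\text{for every }i,$$
which closes the chain of inequalities and, combined with Theorem~\ref{t.Flowboundary}, proves the corollary.

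The main obstacle is the first step: the spectral-radius-versus-volume inequality is classical (Manning and Przytycki) but is not developed in the paper, so it must be invoked with a clean citation or a short Hodge-pairing argument. The remaining step is essentially bookkeeping with lemmas already established in Section~\ref{ss.dominated splitting}, and involves no new dynamical input beyond the forward invariance of the $F$-cone field.
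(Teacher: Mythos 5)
Your proposal follows the paper's argument closely: both reduce the corollary to showing $v_F\geq\log\sp(g_{*,\dim F})$ by invoking the Manning--Przytycki--Yomdin estimate that the homology norm is controlled by volumes of representing chains, choosing representing cycles transverse to $E$, noting that their iterates become and stay tangent to the $F$-cone so that their volume growth is bounded by $v_F$, and then applying Theorem~\ref{t.Flowboundary}. The paper is terser (it simply quotes Yomdin and asserts that ``the same proof still works''), while you fill in the Hodge-pairing step explicitly, but the structure is identical. One small misattribution: the persistence of $F$-cone tangency under further iterates is not Lemma~\ref{l.localtangency} (whose hypothesis requires the disk to stay inside a dynamical ball, which a growing cycle will not do); it is simply the forward invariance of the $F$-cone field from Subsection~\ref{ss.dominated splitting}, and once Remark~\ref{r.localtanency} gives tangency to the $F$-cone, Definition~\ref{d.Fexpansion} applies directly with no further lemma.
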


Theorem~\ref{t.Flowboundary} and Corollary~\ref{c.Fentropyconjecture} were first stated in~\cite{SX10}, but their proof only works 
when the bundle $F$ is uniformly expanding (see the estimation of Bowen ball
in the proof of~\cite{SX10}[Proposition 2]), we will give more explanation in Remark~\ref{r.SX}. 

It was shown in~\cite{BDP03} that every $\C^1$ robustly transitive diffeomorphism admits a dominated splitting,
and the extreme bundles are volume hyperbolic. As a corollary of Theorem~\ref{t.Flowboundary}, we prove that:

\begin{corollary}\label{c.transtive}

Suppose $g$ is a robust transitive diffeomorphism. Then its topological entropy is positive.

\end{corollary}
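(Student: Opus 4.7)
The plan is to reduce Corollary~\ref{c.transtive} to Theorem~\ref{t.Flowboundary} via the Bonatti--D\'iaz--Pujals classification cited above. First, since $g$ is $\C^1$ robustly transitive, the result of~\cite{BDP03} produces a nontrivial finest dominated splitting $TM = E_1 \oplus \dots \oplus E_k$ whose two extreme subbundles are volume hyperbolic; in particular there exist constants $C > 0$ and $\lambda > 1$ with
$$|\det(Dg^n|_{E_k(x)})| \geq C \lambda^n \qquad \text{for all } x \in M,\ n \geq 0.$$
Grouping as $E := E_1 \oplus \dots \oplus E_{k-1}$ and $F := E_k$ yields a (nontrivial) dominated splitting $TM = E \oplus F$ of exactly the form required by Theorem~\ref{t.Flowboundary}.

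Next I would show that the associated volume expansion $v_F$ is strictly positive. Starting from any smooth disk $D_0$ of dimension $\dim F$ transverse to $E$, Remark~\ref{r.localtanency} lets me replace a suitable forward iterate $g^N(D_0)$ by finitely many pieces each tangent to the local $F$-cone; call one such piece $D$. For every $x \in D$ the tangent space $T_xD$ lies in the narrow $F$-cone around $F_x$, so the Jacobian of $Dg_x$ along $T_xD$ differs from the Jacobian along $F_x$ by at most a multiplicative factor depending only on the cone width $a$; iterating and using that the cone is forward-invariant with shrinking aperture, one obtains
$$|\det(Dg^n_x|_{T_xD})| \geq C'(a)\cdot|\det(Dg^n_x|_{F_x})| \geq C'(a) C \lambda^n.$$
Integrating over $D$ then gives $\Leb(g^n(D)) \geq C'(a) C \lambda^n \Leb(D)$, whence $v_F(D) \geq \log\lambda$ and therefore $v_F \geq \log\lambda > 0$. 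Theorem~\ref{t.Flowboundary} now yields $h_{top}(g) \geq v_F > 0$, as desired.

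The only point requiring care is the passage from the BDP volume-hyperbolicity statement, which concerns the \emph{invariant} bundle $F = E_k$, to volume growth of a disk only tangent to the $F$-\emph{cone}; this is precisely what the cone-angle estimate above (together with Lemma~\ref{l.localtangency}) is designed to handle. Everything else is a direct citation, so I do not anticipate any serious obstacle beyond making these cone-to-bundle Jacobian comparisons uniform in $x$ and $n$.
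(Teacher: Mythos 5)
Your proposal is correct and follows essentially the same route as the paper: invoke the Bonatti--D\'iaz--Pujals result to obtain a dominated splitting with volume-expanding extreme bundle $F=E_k$, then show $v_F\geq\log\lambda>0$ by comparing the Jacobian of $g^n$ along a disk tangent to the $F$-cone with the Jacobian along the invariant bundle $F$ itself, and conclude by Theorem~\ref{t.Flowboundary}. The only difference is cosmetic: you spell out the cone-to-bundle Jacobian comparison (via the uniformly bounded projection along $E$ and the forward-invariance of the narrowing cone) more carefully than the paper, which merely asserts that the two Jacobians are ``close.''
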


New results on entropy expansiveness for partially hyperbolic diffeomorphisms can also be found in~\cite{CY} and~\cite{DFPV}.

\subsection{Ergodic theory for flows\label{ss.ergodictheory}}

In this subsection, we state the results of ergodic theory for flows which will be used later. Throughout this
subsection, $\Lambda$ denotes a compact invariant set of the flow $\phi_t$, and $\mu$ is a {\it non-trivial} 
invariant measure of $\phi_t$, i.e., $\mu(\Sing(X))=0$.

\begin{definition}
We say $\Lambda$ admits a {\it dominated splitting} $E\oplus F$
if this splitting is invariant for $\Phi_t$, and there exist $C>0$ and $\lambda<1$ such that for every $x \in \Lambda$, 
and every pair of unit 
vectors $u\in E(x)$ and $v\in F(x)$, one has
$$\|(\Phi_t)_x(u)\|\leq C \lambda^t \|(\Phi_t)_x(v)\|\;\; \text{for}\;\; t>0.$$
\end{definition}

We will see that in the above definition, the assumption of the invariance of the 
splitting is not necessary.

\begin{lemma}\label{l.bothdominatedsplitting}

$E\oplus F|_{\Lambda}$ is a dominated splitting for the flow $\phi_t|_\Lambda$ if and only if it is a dominated splitting for the time-one map $f|_{\Lambda}$. Moreover, if $\phi_t|_{\Lambda}$
is transitive, then we have either $X|_{\Lambda\setminus \Sing(X)}\subset E$ or $X|_{\Lambda\setminus \Sing(X)}\subset F$.

\end{lemma}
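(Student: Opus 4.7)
The plan is to prove the equivalence by passing between integer and real time using uniform bounds on the tangent cocycle, and to handle the ``moreover'' part via a ratio argument on a dense orbit. The implication from flow-domination to $f$-domination is immediate: $\Phi_t$-invariance of $E\oplus F$ gives $f$-invariance, and the flow-domination inequality at $t=L$ reduces to the $f$-domination condition with factor $\tfrac12$ once $L\in\NN$ is chosen with $C\lambda^L<\tfrac12$.

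For the converse, set $M:=\sup\{\|\Phi_r|_{T_xM}\|,\|\Phi_r^{-1}|_{T_xM}\|:x\in\Lambda,\,0\le r\le L\}$, which is finite by compactness of $\Lambda$ and continuity of the tangent flow. Given $t>0$ I would write $t=nL+r$ with $0\le r<L$ and $\Phi_t=\Phi_r\circ Df^{nL}$; iterating the $f$-domination $n$ times on the $f$-invariant bundles $E,F$ gives $\|Df^{nL}u\|/\|u\|\le 2^{-n}\|Df^{nL}v\|/\|v\|$, and absorbing $\Phi_r$ on both sides using $M$ yields $\|\Phi_t u\|/\|u\|\le 2M^2\cdot(2^{-1/L})^t\cdot\|\Phi_t v\|/\|v\|$, which is the required flow-domination. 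To obtain $\Phi_t$-invariance of the splitting, I would verify that for each fixed $t$ the candidate splitting $E'(y):=\Phi_t(E(\phi_{-t}(y)))$, $F'(y):=\Phi_t(F(\phi_{-t}(y)))$ is again a dominated splitting for $f$ on $\Lambda$ with the same dimensions: since $f\circ\phi_t=\phi_t\circ f$, the $f$-iterates of vectors in $E'$ are controlled by those in $E$ up to the constants $M^{\pm 2}$, which are absorbed after enough iterates so that the $\tfrac12$-ratio is attained. Uniqueness of dominated splittings of prescribed dimensions on compact $f$-invariant sets then forces $E'=E$ and $F'=F$, that is, $\Phi_t(E(x))=E(\phi_t(x))$ and similarly for $F$.

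For the \emph{moreover} part I pick $x_0\in\Lambda$ with dense forward $\phi_t$-orbit, which exists by transitivity. I can assume $x_0\notin\Sing(X)$, else $\Lambda\setminus\Sing(X)=\emptyset$ and nothing is to be shown. Decompose $X(x_0)=u+v$ with $u\in E(x_0)$, $v\in F(x_0)$; the identity $X(\phi_t(y))=\Phi_t X(y)$ together with the $\Phi_t$-invariance of $E,F$ makes $\Phi_t u+\Phi_t v$ the decomposition at $\phi_t(x_0)$. If both $u$ and $v$ were nonzero, the flow-domination just proved would force $\|\Phi_t u\|/\|\Phi_t v\|\to 0$ as $t\to+\infty$; on the other hand, density of the orbit supplies $t_n\to+\infty$ with $\phi_{t_n}(x_0)\to x_0$, and continuity of $X$ and of the bundles would make the same ratio tend to $\|u\|/\|v\|>0$, a contradiction. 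Hence $u=0$ or $v=0$ at $x_0$, i.e.\ $X(x_0)\in F(x_0)$ or $X(x_0)\in E(x_0)$. The $\Phi_t$-invariance of each bundle propagates this inclusion along the orbit of $x_0$, and continuity together with density propagates it to all of $\Lambda\setminus\Sing(X)$.

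The step I expect to require the most care is the $\Phi_t$-invariance of the splitting in the converse direction, since it relies on the uniqueness of dominated splittings of fixed dimensions on compact $f$-invariant sets and on absorbing the $\Phi_t$-distortion constants by iterating $f$ sufficiently many times. The rest---interpolating between integer and real times and running the dichotomy on a dense orbit---is routine once these uniform bounds are in hand.
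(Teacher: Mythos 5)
Your proposal is correct and follows essentially the same strategy as the paper: the key step is observing that $\Phi_t(E)\oplus\Phi_t(F)$ is again a dominated splitting for $f$ (via $f\circ\phi_t=\phi_t\circ f$), invoking uniqueness of dominated splittings of fixed dimensions to get $\Phi_t$-invariance, and then using recurrence plus continuity of $X$, $E$, $F$ for the dichotomy in the ``moreover'' part. The only difference is that you explicitly spell out the interpolation between integer and real time via the compactness bound $M$, which the paper leaves implicit.
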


\begin{proof}
The proof of the `only if' part is trivial. Now we assume $E\oplus F|_{\Lambda}$ is a dominated splitting for
$f|_{\Lambda}$. Then, by the commutative property between $f$ and $\phi_t$, it is easy to see that, the splitting $\Phi_t(E)\oplus 
\Phi_t(F)|_{\Lambda}$ is also a dominated splitting for $f|_{\Lambda}$.

Because the dominated splitting of fixed dimension is unique, we conclude that this splitting $E\oplus F|_{\Lambda}$ is invariant for
$\Phi_t$. Therefore, $E\oplus F|_{\Lambda}$ is also a dominated splitting for $\phi_t|_{\Lambda}$.

Now suppose $\phi_t|\Lambda$ is transitive. Take $x\in \Lambda\setminus \Sing(X)$ such that $\Orb^+(x)$ is dense in $\Lambda$.
If $X(x)\notin E(x)\cup F(x)$, by the domination, for $t$ sufficient large, $X(\phi_t(x))$ is close to $F(\phi_t(x))$.
We take $t_0$ large such that $\phi_{t_0}(x)$ is close to $x$, then $X(\phi_{t_0}(x))$ is close to $X(x)$ and $F(\phi_{t_0}(x))$
is close to $F(x)$, which implies in particular that $X(x)$ is arbitrarily close to $F(x)$, a contradiction.

Because $\Orb^+(x)$ is dense, by the continuation of flow direction and the subbundles $E$ and
$F$, for $X(x)\in E(x)$ or $X(x)\in F(x)$, we may show that $X|_{\Lambda\setminus \Sing(X_t)}\subset E$ or 
$X|_{\Lambda\setminus \Sing(X_t)}\subset F$ respectively. The proof is complete.
\end{proof}

\begin{definition}
The {\it topological entropy} ({\it resp. metric entropy}) {\it of a continuous flow} equals to the topological entropy
(resp. metric entropy) of its time-one map.
A flow is $\vep$-{\it entropy expansive} if its time-one map is $\vep$-entropy expansive.
\end{definition}

\begin{lemma}\label{l.entropyforflow}

Let $\mu$ be an ergodic invariant measure of $\phi_t$, and $\tilde{\mu}$ be an ergodic component of $\mu$ for
the diffeomorphism $f$. Then $h_{\mu}(\phi_t)=h_{\tilde{\mu}}(f)$.

\end{lemma}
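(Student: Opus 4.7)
The plan is to combine three ingredients: the paper's convention that flow entropy equals the entropy of the time-one map, the ergodic decomposition formula for metric entropy, and the invariance of entropy under a commuting transformation.

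First, by the definition given just above the statement, $h_{\mu}(\phi_t) = h_{\mu}(f)$, so it suffices to show that $h_{\mu}(f) = h_{\tilde{\mu}}(f)$. Since $\mu$ may fail to be $f$-ergodic, consider its ergodic decomposition under $f$,
\[
\mu = \int \tilde{\mu}_{\alpha}\, d\lambda(\alpha),
\]
where each $\tilde{\mu}_{\alpha}$ is $f$-ergodic and $\lambda$ is a probability measure on the index space. The standard ergodic-decomposition formula for metric entropy gives
\[
h_{\mu}(f) = \int h_{\tilde{\mu}_{\alpha}}(f)\, d\lambda(\alpha).
\]
Thus everything reduces to showing that $\alpha \mapsto h_{\tilde{\mu}_{\alpha}}(f)$ is $\lambda$-almost everywhere constant, and equal to $h_{\tilde{\mu}}(f)$.

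For this, I would use the fact that $f = \phi_1$ commutes with every $\phi_s$. In particular, $(\phi_s)_* \tilde{\mu}_{\alpha}$ is again $f$-invariant and $f$-ergodic, and by the uniqueness of the ergodic decomposition the flow acts on the index space through some measurable family of measure-preserving maps $\sigma_s$, i.e.\ $(\phi_s)_* \tilde{\mu}_{\alpha} = \tilde{\mu}_{\sigma_s(\alpha)}$. Since $\mu$ is ergodic for the flow $\phi_t$, the action $\{\sigma_s\}$ must be ergodic on the index space $(\alpha, \lambda)$. Moreover, because $\phi_s \circ f = f \circ \phi_s$, conjugation by $\phi_s$ preserves the metric entropy of $f$-invariant measures, giving
\[
h_{(\phi_s)_* \tilde{\mu}_{\alpha}}(f) = h_{\tilde{\mu}_{\alpha}}(f)
\qquad \text{for every } s\in \mathbb{R}.
\]
Hence the function $\alpha \mapsto h_{\tilde{\mu}_{\alpha}}(f)$ is invariant under every $\sigma_s$, and therefore constant $\lambda$-a.e.\ by ergodicity. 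Plugging this constant back into the ergodic decomposition formula yields $h_{\mu}(f) = h_{\tilde{\mu}}(f)$, which combined with $h_{\mu}(\phi_t) = h_{\mu}(f)$ completes the proof.

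The main technical point to be careful with is the justification that the flow acts on the index space of the ergodic decomposition through a well-defined ergodic action; this is the standard measurable version of the fact that $\phi_s$ permutes the ergodic components of $\mu$ under $f$, but it should be stated cleanly so that invoking ergodicity of $\mu$ under $\phi_t$ really gives the desired $\lambda$-a.e.\ constancy. Everything else (the definitional identity $h_{\mu}(\phi_t) = h_{\mu}(f)$, the affine decomposition of metric entropy, and invariance of entropy under conjugation by a commuting map) is a standard ingredient from ergodic theory.
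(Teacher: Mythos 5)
Your argument is correct and relies on the same two ingredients as the paper: affinity of metric entropy with respect to the ergodic decomposition, and the fact that pushforward by $\phi_s$ preserves the $f$-entropy of an $f$-invariant measure because $\phi_s$ commutes with $f$. The difference lies in how constancy of entropy across the components is established. The paper writes the $f$-ergodic decomposition of $\mu$ in the explicit form $\mu = \int_{[0,1]} (\phi_t)_*\tilde{\mu}\,dt$, so the component space is literally the circle $[0,1]$ and every component is $(\phi_t)_*\tilde{\mu}$ for some $t$; constancy is then immediate from the conjugation observation, and affinity closes the proof in a single line. You take an abstract decomposition $\mu = \int \tilde{\mu}_\alpha\,d\lambda(\alpha)$ and argue instead that the flow descends to an ergodic, measure-preserving action $\sigma_s$ on the index space, which makes the (measurable, $\sigma_s$-invariant) entropy map $\alpha \mapsto h_{\tilde{\mu}_\alpha}(f)$ constant $\lambda$-a.e. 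This is sound, but note two things: the ergodicity of $\sigma_s$ is essentially equivalent in content to the paper's explicit formula, so you have not avoided that step but rephrased it; and the a.e.\ constancy only covers a full-measure set of components, so you are tacitly assuming the \emph{given} $\tilde{\mu}$ is $\lambda$-typical, whereas the paper's explicit parametrization makes it visibly one of the $(\phi_t)_*\tilde{\mu}$. Both routes are valid; the paper's is shorter because it picks coordinates on the component space adapted to the flow from the outset.
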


\begin{proof}

Observe that $\tilde{\mu}_t=(\phi_t)_* \tilde{\mu}$ is also an $f$-invariant measure
and 
$$\mu=\int_{[0,1]} (\phi_t)_* \tilde{\mu}_t dt.$$
It is easy to show $h_{\tilde{\mu}_t}(f)=h_{\tilde{\mu}}(f)$ by the following observation:
for any partition $\cA=\{A_1,\dots, A_k\}$, write $\cA_t=\{\phi_t(A_1),\dots, \phi_t(A_k)\}$, 
then $\tilde{\mu}(A_i)=\tilde{\mu}_t(\phi_t(A_i))$. 

Because metric entropy is an affine function respect to the invariant measures, 
$$h_{\mu}(f)=\int_{[0,1]}h_{\tilde{\mu}}(f)dt=h_{\tilde{\mu}}(f)=h_{\tilde{\mu}}(\phi_t).$$
\end{proof}

As a corollary of the above lemma, we state the following two results in the version for flows.

\begin{lemma}\label{l.flowexpansive}({\cite{Bow72b}})
Suppose the flow $\phi_t$ is entropy expansive, then the metric entropy function is upper semi-continuous.
In particular, there exists a maximal measure.

\end{lemma}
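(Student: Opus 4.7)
The plan is to reduce the statement to Bowen's classical result for homeomorphisms and then transfer it along the bridge between the flow and its time-one map. By the definition of entropy expansiveness and metric entropy for a flow given just above, the hypothesis that $\phi_t$ is $\vep$-entropy expansive is exactly the statement that $f=\phi_1$ is $\vep$-entropy expansive, and for every $\phi_t$-invariant probability measure $\mu$ we have $h_\mu(\phi_t)=h_\mu(f)$.

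First I would invoke the theorem of Bowen~\cite{Bow72b}: for a homeomorphism $f$ of a compact metric space which is entropy expansive, the map $\mu\mapsto h_\mu(f)$ is upper semi-continuous on the space $\cM_{inv}(f)$ of $f$-invariant probability measures endowed with the weak-$*$ topology. Combined with the identification above, it suffices to check that $\cM_{inv}(\phi_t)$ is a (closed) subset of $\cM_{inv}(f)$ with the induced topology, which is standard: every $\phi_t$-invariant measure is in particular $\phi_1$-invariant, and weak-$*$ convergence of a sequence of flow-invariant measures is clearly weak-$*$ convergence of $f$-invariant measures. Hence the restriction of the upper semi-continuous function $\mu\mapsto h_\mu(f)$ to $\cM_{inv}(\phi_t)$ remains upper semi-continuous, giving the first assertion.

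For the existence of a measure of maximal entropy, I would argue as follows. The set $\cM_{inv}(\phi_t)$ is a non-empty compact subset of $\cM_{inv}(f)$ in the weak-$*$ topology, and any upper semi-continuous function on a compact set attains its supremum. Applied to $\mu\mapsto h_\mu(\phi_t)$, this produces some $\mu_*\in \cM_{inv}(\phi_t)$ with $h_{\mu_*}(\phi_t)=\sup_{\mu\in\cM_{inv}(\phi_t)}h_\mu(\phi_t)$. By the variational principle for flows (which follows from the variational principle for $f$ together with the identification of entropies), this supremum equals $h_{top}(\phi_t)$, so $\mu_*$ is a measure of maximal entropy.

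There is no real obstacle here beyond bookkeeping: the argument is a routine reduction to Bowen's theorem via the time-one map, and the only point to keep straight is that entropy expansiveness, metric entropy, topological entropy, and the relevant space of invariant measures are all defined in a way compatible with passing from $\phi_t$ to $f$.
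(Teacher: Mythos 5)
Your proposal is correct and follows essentially the route the paper intends: define away the difference between $\phi_t$ and $f=\phi_1$ (entropy expansiveness of the flow is by definition that of the time-one map, and $h_\mu(\phi_t)$ is by definition $h_\mu(f)$), invoke Bowen's upper semi-continuity result for entropy expansive homeomorphisms, restrict to the closed subspace $\cM_{inv}(\phi_t)\subset\cM_{inv}(f)$, and obtain the maximal measure by compactness plus the variational principle. The paper phrases the lemma as a corollary of its Lemma~\ref{l.entropyforflow}, but for this particular statement the direct definitional identification $h_\mu(\phi_t)=h_\mu(f)$ that you use suffices, so your bookkeeping is if anything slightly cleaner; there is no gap.
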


\begin{lemma}\label{l.flowrobustranstive}(\cite{LVY}[Lemma2.3])
Let $\cU$ be a $\C^1$ open set of flows which are $\delta$-entropy expansive for some $\delta>0$. Then
the topological entropy varies in an upper semi-continuous manner for flows in $\cU$.

\end{lemma}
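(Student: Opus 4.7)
The plan is to reduce the assertion directly to the diffeomorphism version, \cite{LVY}[Lemma 2.3], via the time-one map. By the definition given just above, the topological entropy of a flow coincides with that of its time-one map, and a flow is $\delta$-entropy expansive precisely when its time-one map is. Moreover, the assignment $\phi_t\mapsto \phi_1$ is continuous from $\C^1$ flows to $\Diff^1(M)$. Hence $\cU$ is sent into a family of diffeomorphisms that are uniformly $\delta$-entropy expansive in the sense of \cite{LVY}, and upper semi-continuity of $h_{top}$ on the image pulls back to upper semi-continuity of $h_{top}$ on $\cU$.

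For completeness, I record the standard diffeomorphism argument. Fix $\phi_t^{(0)}\in \cU$ and a sequence $\phi_t^{(n)}\to \phi_t^{(0)}$ in $\cU$; write $f_n=\phi_1^{(n)}$. Using the variational principle, choose $f_n$-invariant ergodic measures $\mu_n$ with $h_{\mu_n}(f_n)\geq h_{top}(f_n)-1/n$, and pass to a weak-$*$ subsequential limit $\mu$, which is automatically $f_0$-invariant. Fix a finite partition $\cA$ of diameter less than $\delta$ whose boundaries have $\mu$-measure zero; by $\delta$-entropy expansiveness of $f_0$ one has $h_\mu(f_0)=h_\mu(f_0,\cA)$. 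Approximating $h_\mu(f_0,\cA)$ by $\frac{1}{k}H_\mu(\cA_0^{k-1})$ for large $k$, and using that weak-$*$ convergence gives $H_{\mu_n}(\cA_0^{k-1})\to H_\mu(\cA_0^{k-1})$ together with the $C^0$ convergence $f_n\to f_0$ (so the iterated partitions behave well), one concludes
$$h_{top}(\phi_t^{(0)})=h_{top}(f_0)\geq h_\mu(f_0)\geq \limsup_{n}h_{\mu_n}(f_n)=\limsup_{n}h_{top}(\phi_t^{(n)}).$$

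The main technical subtlety is the uniformity of the entropy-expansiveness constant across $\cU$, but this is built into the hypothesis that a single $\delta$ works for the whole family; the constants appearing above depend only on $\delta$ and on the partition $\cA$, not on the specific diffeomorphism. A minor bookkeeping point is that the variational principle delivers measures for the time-one map rather than for the flow, but Lemma~\ref{l.entropyforflow} ensures no information is lost in switching between the two viewpoints, so the conclusion for $h_{top}(\phi_t^{(n)})$ and $h_{top}(\phi_t^{(0)})$ transfers with no loss.
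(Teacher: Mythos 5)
Your first paragraph reproduces the paper's argument exactly: the lemma is stated as a direct citation of \cite{LVY}[Lemma 2.3], the reduction being nothing more than the definitional facts that the topological entropy and the $\delta$-entropy expansiveness of a flow are, by convention in this paper, those of its time-one map, together with the $\C^1$-continuity of $\phi_t\mapsto\phi_1$. The paper supplies no further argument, so your approach matches.

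One small correction in your supplementary sketch of the underlying [LVY] argument: the entropy expansiveness you need to invoke is that of each $f_n$, not that of $f_0$. It is expansiveness of $f_n$ that yields $h_{\mu_n}(f_n)=h_{\mu_n}(f_n,\cA)$ for the fixed partition $\cA$ of diameter less than $\delta$, and this is exactly what lets you control $h_{\mu_n}(f_n)$ by $\tfrac{1}{k}H_{\mu_n}(\cA_0^{k-1})$ before passing to the limit in $n$; the step $h_\mu(f_0,\cA)\leq h_\mu(f_0)$ on the other end is automatic and needs no expansiveness hypothesis on $f_0$. Since every flow in $\cU$, hence every $f_n$, is $\delta$-entropy expansive by hypothesis, the correct invocation is available and your conclusion stands, but as written the justification points at the wrong map.
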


The {\it linear Poincar\'e flow $\psi_t$} is defined as following. Denote the normal bundle
of $\phi_t$ over $\Lambda$ by
$$\cN_{\Lambda}=\cup_{x\in \Lambda\setminus \Sing(X)}\cN_x,$$
where $\cN_x$ is the orthogonal complement of the flow direction $X(x)$, i.e.,
$$\cN_x=\{v\in T_xM^d: v\bot X(x)\}.$$
Denote the orthogonal projection of $T_xM$ to $\cN_x$ by $\pi_x$.
Given $v\in\cN_x, x\in M^d\setminus \Sing(X)$, $\psi_t(v)$ is the orthogonal projection of $\Phi_t(v)$ 
on $\cN_{\phi_t(x)}$ along the flow direction, i.e.,
$$\psi_t(v)=\pi_{\phi_t(x)}(\Phi_t(v))=\Phi_t(v)-\frac{<\Phi_t(v),X(\phi_t(x))>}{\|X(\phi_t(x))\|^2}X(\phi_t(x)),$$
where $<.,.>$ is the inner product on $T_xM$ given by the Riemannian metric. The following is
the flow version of Oseledets theorem.

\begin{proposition}\label{p.Oseledets}

For $\mu$ almost every $x$, there exist $k=k(x)\in \mathbb{N}$,
real number $\hat{\lambda}_1(x)>\dots>\hat{\lambda}_k(x)$ and a measurable splitting
$$\cN_x=\hat{E^1_x}\oplus \dots \oplus \hat{E^k_x}$$
such that this splitting is invariant under $\psi_t$, and
$$\underset{t\to \pm\infty}{\lim} \frac{1}{t}\log \|\psi_t(v_i)\|=\hat{\lambda_i}(x)\;\; \text{for every non-zero}\;\; v_i\in \hat{E}^i(x).$$

\end{proposition}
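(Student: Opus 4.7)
The plan is to derive this as a consequence of the classical discrete-time multiplicative ergodic theorem applied to the time-one linear Poincar\'e map $\psi_1$, viewed as a measurable linear cocycle on the normal bundle $\cN$ over the base dynamics $f=\phi_1$. Since $\mu$ is non-trivial we have $\mu(\Sing(X))=0$, so $\cN_x$ and $\psi_1:\cN_x\to\cN_{f(x)}$ are well-defined for $\mu$-a.e.\ $x$, and $\mu$ is automatically $f$-invariant.

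First I would verify the integrability $\int\log^+\|\psi_{\pm 1}\|\,d\mu<\infty$. The formula $\psi_{\pm 1}(v)=\pi_{\phi_{\pm 1}(x)}(\Phi_{\pm 1}v)$, together with $\|\pi_y\|\le 1$ for the orthogonal projection, yields $\|\psi_{\pm 1}\|\le\|\Phi_{\pm 1}\|$, and the right-hand side is bounded uniformly on $M$ by compactness and $\C^1$-smoothness of $\phi_t$. The discrete Oseledets theorem then produces, for $\mu$-a.e.\ $x$, a measurable splitting $\cN_x=\hat{E}^1_x\oplus\cdots\oplus\hat{E}^k_x$ and numbers $\hat{\lambda}_1(x)>\cdots>\hat{\lambda}_k(x)$ with $\frac{1}{n}\log\|\psi_n v_i\|\to\hat{\lambda}_i(x)$ as $n\to\pm\infty$ along the integers, for any non-zero $v_i\in\hat{E}^i_x$.

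To promote the limits to real time, write $t=n+s$ with $n\in\ZZ$ and $s\in[0,1)$, so $\psi_t=\psi_s\circ\psi_n$. The fiberwise operator norm of $\psi_s$ is bounded uniformly over $s\in[0,1]$ and $x\in M\setminus\Sing(X)$ by some constant $C>0$, so $\log\|\psi_t v\|=\log\|\psi_n v\|+O(1)$, and division by $t$ recovers $\hat{\lambda}_i(x)$ in the limit $t\to\pm\infty$. Full $\psi_t$-invariance of the splitting then follows from uniqueness of the Oseledets decomposition combined with the cocycle commutation $\psi_1\circ\psi_t=\psi_{t+1}=\psi_t\circ\psi_1$: the collection $\{\psi_t(\hat{E}^i_x)\}_i$ is a $\psi_1$-equivariant decomposition of $\cN_{\phi_t(x)}$ whose associated Lyapunov exponents (computed along integer times) match the $\hat{\lambda}_i(x)$; matching with the ordered list of exponents at $\phi_t(x)$ forces $\psi_t(\hat{E}^i_x)=\hat{E}^i_{\phi_t(x)}$ and $\hat{\lambda}_i(\phi_t(x))=\hat{\lambda}_i(x)$.

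The only delicate point is the a priori possibility that $\|\psi_{\pm 1}\|$ blows up near $\Sing(X)$, since the factor $\|X(\phi_{\pm 1}(x))\|^{-2}$ appearing in the explicit formula for $\psi_{\pm 1}$ degenerates there; the bound $\|\psi_{\pm 1}\|\le\|\Phi_{\pm 1}\|$ obtained via the orthogonal projection bypasses this entirely. All remaining steps are a routine transcription of the classical cocycle argument to continuous time.
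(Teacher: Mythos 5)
The paper states Proposition~\ref{p.Oseledets} as a standard fact (``the flow version of Oseledets theorem'') and supplies no proof, so there is nothing to compare against on the paper's side; your reduction to the discrete-time multiplicative ergodic theorem for the time-one Poincar\'e cocycle $\psi_1$ over $f=\phi_1$ is exactly the textbook derivation that the paper implicitly relies on, and it is correct. The only tiny imprecision is in the promotion to real time: to get the two-sided estimate $\log\|\psi_t v\|=\log\|\psi_n v\|+O(1)$ from $\psi_t=\psi_s\circ\psi_n$ you need a uniform bound on $\|\psi_u\|$ for $u\in[-1,1]$, not just $u\in[0,1]$ (the lower bound uses $\|\psi_n v\|\le\|\psi_{-s}\|\,\|\psi_t v\|$); this follows from the same estimate $\|\psi_u\|\le\|\Phi_u\|$ that you already invoke.
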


Now we state the relation between Lyapunov exponents and the Oseledets splitting for $\phi_t$ and for $f$:

\begin{theorem}\label{t.exponentsdiffandflow}
For $\mu$ almost every $x$, denote
$\lambda_1(x)>\dots>\lambda_k(x)$ the Lyapunov exponents and the Oseledets splitting
$T_xM=E^1_x\oplus \dots \oplus E^k_x$ of $\mu$ for $f$. Then
$\cN_x=\pi_x(E^1_x)\oplus \dots \oplus \pi_x(E^k_x)$ is the Oseledets splitting of $\mu$ for $\phi_t$. And
the Lyapunov exponents of $\mu$ counted with multiplicity for the flow $\phi_t$ is a subset of the 
exponents of $f$ by removing one of the zero exponent which comes from the flow direction.

\end{theorem}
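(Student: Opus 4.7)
The plan is to identify the flow direction $\RR X(x)$ as the kernel of $\pi_x$ and simultaneously as one of the Oseledets subspaces for $f$ (with exponent $0$), and then show that $\pi_x$ transports the remaining Oseledets data faithfully to the normal bundle $\cN_x$. First I would note that since $\mu$ is non-trivial, $X(x)\neq 0$ for $\mu$-a.e.\ $x$, and the relation $\Phi_t X(x) = X(\phi_t x)$ makes $\RR X$ a $Df$-invariant line bundle. Its Lyapunov exponent equals $\lim_n \frac{1}{n}\log\|X(f^n x)\|$; by Poincar\'e recurrence there is a subsequence $n_k$ with $f^{n_k}x\to x$ and hence $\|X(f^{n_k}x)\|\to \|X(x)\|\neq 0$, forcing this limit to be $0$ on a full-measure set. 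Consequently there is a unique index $j$ with $\lambda_j(x)=0$ and $\RR X(x)\subset E^j_x$.

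Next I would verify the decomposition of $\cN_x$. Since $\ker\pi_x = \RR X(x)$, the restriction $\pi_x|_{E^i_x}$ is injective for $i\neq j$ and has $1$-dimensional kernel on $E^j_x$; a dimension count gives $\sum_i \dim \pi_x(E^i_x) = \dim T_xM - 1 = \dim \cN_x$. Directness is inherited from $T_xM=\bigoplus E^i_x$: if $\sum_i \pi_x(w_i)=0$ with $w_i\in E^i_x$, then $\sum_i w_i\in \RR X(x)\subset E^j_x$, forcing $w_i=0$ for $i\neq j$ and $w_j\in\ker\pi_x$. Invariance under $\psi_t$ then follows from the intertwining $\psi_t\circ \pi_x = \pi_{\phi_t x}\circ \Phi_t$ (both sides kill any $X(x)$-component), combined with $\Phi_t$-invariance of each $E^i$; the latter holds because $\Phi_t$ commutes with $\Phi_1$, so $\Phi_t(E^i_x)$ is a $\Phi_1$-invariant subbundle with the same exponent $\lambda_i$, and uniqueness of Oseledets subspaces forces equality with $E^i_{\phi_t x}$.

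To compute the exponent on $\pi_x(E^i_x)$, I would lift $v=\pi_x w$ with $w\in E^i_x$ and write $\|\psi_t v\| = \|\Phi_t w\|\sin\theta_t$, where $\theta_t$ is the angle at $\phi_t x$ between $\Phi_t w\in E^i_{\phi_t x}$ and $X(\phi_t x)\in E^j_{\phi_t x}$. When $i\neq j$ these lie in distinct Oseledets subbundles, and the standard temperedness statement from the multiplicative ergodic theorem gives $\frac{1}{t}\log\sin(\angle(E^i_{\phi_t x},E^j_{\phi_t x}))\to 0$. Hence $\frac{1}{t}\log\|\psi_t v\|\to \lambda_i$, which confirms both the Oseledets splitting and the exponents on $\cN_x$. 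The multiplicity assertion then reads off directly: the flow exponents are exactly those of $f$ with the multiplicity of the zero exponent lowered by one (corresponding to the loss of $\RR X(x)$ under $\pi_x$).

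The main obstacle is the last step: a priori $\pi_{\phi_t x}$ could destroy a large fraction of $\|\Phi_t w\|$ by aligning $\Phi_t w$ with the flow direction, dropping the effective exponent. The rescue is precisely the tempered-angle property of Oseledets subbundles, which rules out exponential collapse of $\theta_t$. A smaller technical nuisance is upgrading $\Phi_1$-invariance of the Oseledets splitting to $\Phi_t$-invariance for all real $t$; this is handled by the uniqueness of Oseledets subspaces of a given exponent together with the commutation $\Phi_t\Phi_1=\Phi_1\Phi_t$, applied on a $\phi_s$-invariant full-measure set.
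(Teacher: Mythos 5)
The paper states Theorem~\ref{t.exponentsdiffandflow} without proof, treating it as folklore, so there is no target argument to compare against. Most of your outline is sound and follows the standard route: $\RR X$ is a $\Phi_1$-invariant line bundle with exponent $0$ (recurrence of $\|X\|$ forces this), the intertwining $\psi_t\circ\pi_x=\pi_{\phi_t x}\circ\Phi_t$ is correct, the upgrade of $\Phi_1$-invariance of the Oseledets bundles to $\Phi_t$-invariance via uniqueness and commutation is legitimate, the direct-sum decomposition and dimension count of $\cN_x$ are right, and the identity $\|\psi_t v\|=\|\Phi_t w\|\sin\theta_t$ is correct.

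There is, however, a genuine gap in the exponent computation: your tempered-angle rescue addresses only $i\neq j$. The theorem is stated for arbitrary non-trivial $\mu$, so the $f$-Oseledets block $E^j\supset \RR X$ with exponent $0$ may well have dimension $\geq 2$, in which case $\pi_x(E^j_x)$ is a nonzero piece of the asserted splitting and you must show its $\psi_t$-exponent is $0$. But when $i=j$, both $\Phi_t w$ and $X(\phi_t x)$ lie in $E^j_{\phi_t x}$, so $\angle(E^i_{\phi_t x},E^j_{\phi_t x})$ gives no lower bound on $\theta_t$; indeed $\theta_t$ can collapse (a shear on $E^j$ fixing $X$ already sends $\theta_t\to 0$), so the "tempered angle between distinct Oseledets subbundles" is simply not available. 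The repair is a $2$-volume argument inside $E^j$ rather than an angle-separation argument: writing $\sin\theta_t = \|\Phi_t w\wedge X(\phi_t x)\|\,/\,(\|\Phi_t w\|\,\|X(\phi_t x)\|)$ and noting $\mathrm{span}(w,X(x))\subset E^j_x$, the Oseledets bounds $m(\Phi_t|_{E^j})^2 \leq |\det(\Phi_t|_V)| \leq \|\Phi_t|_{E^j}\|^2$ for any $2$-plane $V\subset E^j_x$ give $\tfrac{1}{t}\log\|\Phi_t w\wedge X(\phi_t x)\|\to 2\lambda_j=0$, while the denominator also has rate $0$; hence $\tfrac{1}{t}\log\sin\theta_t\to 0$ in this case too. (Equivalently, one can bound $\|\psi_t v\|\geq \|v\|/\|\psi_{-t}|_{\pi(E^j)}\|$ and use the backward Oseledets bound on $E^j$.) For the hyperbolic measures the paper actually applies this to, $\dim E^j=1$ and the gap is vacuous, but as written the argument does not establish the theorem in the generality in which it is stated.
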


\begin{definition}

$\mu$ is called a {\it hyperbolic measure for flow $\phi_t$} if it is an ergodic measure of $\phi_t$ 
and all the exponents are non-vanishing. We call the number of the negative exponents of $\mu$, 
counted with multiplicity, its {\it index}.

\end{definition}

\subsection{Lorenz-like classes}

We will prove that Lorenz-like class is always entropy expansive. This is a generalization of the result 
in~\cite{APPV09}, where expansiveness has been shown for every 3-dimensional Lorenz attractor.

\begin{proposition}\label{p.lorenzexpansive}

Let $\Lambda$ be a Lorenz-like class. Then there are $\delta>0$ and a
neighborhood $U$ of $\Lambda$, such that $\phi_t|_{U}$ is $\delta$-entropy expansive. Moreover, 
if $\Lambda$ is a quasi attractor and $Sing(X)\cap\Lambda$ consists of only hyperbolic singularities, then there is a $\C^1$ 
neighborhood $\cU$ of $X$, such that the flows generated by vector fields in $\cU$
restricted on $U$ are all $\delta$-entropy expansive.

\end{proposition}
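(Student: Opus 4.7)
The plan is to pass to the time-one map $f=\phi_1$ and invoke the almost-entropy-expansive criterion of Lemma~\ref{l.entropyexpansive}: it suffices to produce $\delta>0$ and a compact neighborhood $U$ of $\Lambda$ such that, for every ergodic $f$-invariant probability measure $\mu$ supported in $U$ and $\mu$-almost every $x$, $h_{top}(f, B_\infty^f(x,\delta))=0$. First I would extend the cone fields corresponding to $E^s\oplus F^{cu}$ to a small compact neighborhood $U$ of $\Lambda$ by continuity; both the uniform contraction of $E^s$ and the sectional expansion of $F^{cu}$ in condition (c) of Definition~\ref{d.lorenzlike} persist on $U$ after slightly shrinking constants. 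I would then build fake invariant foliations $\hat\cW^s$ and $\hat\cW^{cu}$ for $f$ on $U$, adapted to these cones, with leaves of $\hat\cW^s$ uniformly exponentially contracted by $f$.

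For $\delta$ smaller than the local product radius of these fake foliations, every Bowen ball satisfies
\[
B_\infty^f(x,\delta) \subset \bigcup_{y\in \hat\cW^{cu}_\delta(x)} \hat\cW^s_\delta(y),
\]
and two points on the same $\hat\cW^s$-leaf remain $\delta$-close under all forward iterates of $f$. Hence $h_{top}(f, B_\infty^f(x,\delta)) = h_{top}(f, B_\infty^f(x,\delta)\cap \hat\cW^{cu}_\delta(x))$, reducing the task to controlling the $cu$-slice of the Bowen ball.

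Now fix an ergodic measure $\mu$ on $U$. If $\mu$ is non-trivial, then by Lemma~\ref{l.bothdominatedsplitting} (applied to its support) the flow direction $X$ lies in $F^{cu}$ at $\mu$-a.e. $x$. If the $cu$-slice $B_\infty^f(x,\delta) \cap \hat\cW^{cu}_\delta(x)$ contained a $\C^1$ sub-disk $D$ of dimension $\geq 2$ tangent to the $F^{cu}$-cone, Lemma~\ref{l.localtangency} would keep its forward iterates tangent to local $F^{cu}$-cones, so the sectional-expansion hypothesis would give $\Leb(f^n D) \geq K e^{\lambda n}\Leb(D)$, contradicting the uniform bound $\Leb(f^n D) \leq K_0$ of Lemma~\ref{l.onesteplocalvolume}. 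Hence the $cu$-slice is at most one-dimensional, and it must lie along the flow orbit through $x$; the time-one map on a bounded arc of a single orbit has zero topological entropy. For a measure $\delta_\sigma$ at a singularity, the same volume-growth argument (with $\sigma$ a fixed point of $f$) shows that the $cu$-slice of $B_\infty^f(\sigma,\delta)$ is at most one-dimensional, and Lyapunov stability of $\Lambda$ together with the contraction by $f$ transverse to the flow direction in $F^{cu}$ should force this one-dimensional piece to carry zero entropy.

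For the second statement, the quasi-attractor hypothesis provides an attracting neighborhood $U$ with $\phi_t(\overline U)\subset\mathrm{int}(U)$ for all $t>0$, a property robust under $\C^1$ perturbations. The cone fields, the sectional-expansion constants, and hence the fake foliations on $\overline U$ depend continuously on the vector field; hyperbolicity of the singularities in $\Lambda$ ensures that each nearby vector field has only finitely many singularities in $U$, all hyperbolic and varying continuously. The same $\delta$ and the same argument then yield $\delta$-entropy expansiveness uniformly on a $\C^1$ neighborhood $\cU$ of $X$. The main obstacle I expect is the singularity case in the $cu$-slice analysis: at a singularity the flow vector vanishes, so the convenient conclusion ``the remaining one-dimensional piece lies along a flow orbit'' fails, and one must exploit Lyapunov stability (or, for the second statement, hyperbolicity) of $\sigma$ together with the fine geometry of the fake foliations near $\sigma$ to rule out positive entropy — this is where the real technical work will sit.
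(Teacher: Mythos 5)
Your setup matches the paper's: you pass to the time-one map, extend the cone fields to a small neighborhood, build fake foliations, invoke the almost-entropy-expansive criterion of Lemma~\ref{l.entropyexpansive}, and reduce via Lemma~\ref{l.bowenball} to controlling the slice of the infinite Bowen ball inside the fake $cu$-leaf. Up to there you are on the same track as the paper.

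The gap is in the core step. You argue: since a $2$-dimensional disk tangent to the $F^{cu}$-cone inside $B_\infty(x,\delta)\cap\hat\cW^{cu}_\delta(x)$ would violate the volume bound of Lemma~\ref{l.onesteplocalvolume}, the $cu$-slice ``is at most one-dimensional, and it must lie along the flow orbit through $x$.'' Neither conclusion follows. A compact subset of a $\dim(F^{cu})$-dimensional leaf that contains no $\C^1$ disk need not be a $1$-manifold --- it could be a Cantor-type set of any Hausdorff dimension, and such sets can perfectly well carry positive topological entropy. And even granting that the $cu$-slice were a $1$-dimensional arc, there is nothing in the volume argument forcing it to align with the flow direction rather than, say, a transverse unstable direction inside $F^{cu}$. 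The paper's actual mechanism is entirely different: it introduces the normal-section distance $d^*_x(x,P_x(y))$ between orbits, proves it is \emph{exponentially expanded} when the orbit is away from singularities (Lemma~\ref{l.localholonomy}, via a holonomy-area estimate), and expanded by a definite factor $L$ across each excursion near a singularity (Lemma~\ref{l.nearsingularity}, which blends the two families of fake foliations to build a local product structure in the $cu$-leaf near singular points). Since $d^*$ is bounded, iterating this expansion forces $d^*_x(x,P_x(y))=0$, i.e.\ $y$ lies on the local orbit of $x$. This is what shows the Bowen ball is a flow arc, and none of it is replaced by your disk-exclusion argument. The singular case you flag as ``where the real technical work will sit'' is indeed nontrivial, but note the preparatory work is already substantial: Lemma~\ref{l.singularities} gives the partially hyperbolic structure on $\Sing(X)\cap\tilde\Lambda$, Lemmas~\ref{l.awayfromIi} and~\ref{l.extremepoints} show that nontrivial ergodic measures can only approach finitely many half-neighborhoods of extreme singularities that are topologically hyperbolic, and Lemma~\ref{l.singularitiesexpansive} handles the trivial measures. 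Your proposal leaves all of that, and the holonomy estimates, unaddressed.

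For the second statement (robustness for quasi-attractors with hyperbolic singularities), your outline is in the right spirit, but it inherits the same gap: robustness of ``the $cu$-slice is a flow arc'' has to be established through the robust versions of the expansion Lemmas~\ref{l.localholonomy},~\ref{l.measureawayfromsingularity} and~\ref{l.nearsingularity}, which is what the paper does; hyperbolicity of the singularities is used precisely so that each singularity is isolated, each can be treated as an extreme point with a definite half-neighborhood structure, and the constants in those lemmas persist under $\C^1$ perturbation.
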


The proof of Proposition~\ref{p.lorenzexpansive} will be given in Section~\ref{s.entropy}.
By Lemmas~\ref{l.flowexpansive} and~\ref{l.flowrobustranstive}, entropy expansiveness implies
the metric entropy and topological entropy vary in an upper semi-continuous manner. This fact is important
for us to prove that the topological entropy of any Lorenz-like class is positive. The lower
semi-continuation of the topological entropy comes from a similar argument as Katok~\cite{K80} 
and a shadowing lemma of Liao~\cite{L89}, which permits the pseudo orbit to pass near
singularities. 

\begin{theorem}\label{t.lowercontinuiation}

Let $\Lambda$ be a compact invariant set of flow $\phi_t$ and admit a dominated splitting
$E\oplus F$. Suppose $\mu$ is a non-trivial hyperbolic ergodic measure of $\phi_t$ with 
positive entropy and index $dim(E)$. Then for any $n>0$, there 
is a hyperbolic set $\Lambda_n$ arbitrarily close to $\supp(\mu)$ such that 
$h_{top}(\phi_t|_{\Lambda_n})>h_{top}(\phi_t|_{\supp(\mu)})-\frac{1}{n}.$

\end{theorem}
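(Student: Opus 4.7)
The plan is to adapt Katok's horseshoe construction inside Pesin blocks of a hyperbolic measure to the $C^1$-with-dominated-splitting setting for flows, using Liao's shadowing lemma to close the resulting pseudo-orbits even when they pass close to $\Sing(X)$.

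First I would use Theorem~\ref{t.exponentsdiffandflow} together with the hypothesis that $\mu$ is hyperbolic with index $\dim(E)$ to conclude that the Oseledets splitting of $\mu$ for $\phi_t$ refines the dominated splitting. Indeed, since $E$ is $\Phi_t$-dominated by $F$ and $\mu$ has exactly $\dim(E)$ negative Lyapunov exponents, these negative exponents span the $E$-direction, and the remaining nonnegative ones (including the zero exponent coming from the flow direction) span the $F$-direction. In particular the projection of $F$ to the normal bundle $\cN$ contains all nonnegative Poincar\'e exponents, and these are strictly positive because $\mu$ is hyperbolic and non-trivial.

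Next I would run Katok's counting argument for the time-one map $f=\phi_1$. Fix small $\eta,\epsilon>0$ and a large Pesin block $\Gamma_\ell\subset\supp(\mu)$ of measure close to $1$ on which the exponents realize their asymptotic values at uniform rates. Using the Brin--Katok local entropy formula, one extracts an $(N,\epsilon)$-separated set $S_N\subset\Gamma_\ell$ with $\#S_N\geq e^{N(h_\mu-\eta)}$, whose points have first and $N$-th iterates returning to $\Gamma_\ell$. Because we only have a $C^1$ dominated splitting, classical Pesin stable/unstable manifolds are not available. I would replace them by the local graph manifolds obtained by iterating disks tangent to the local $E$- and $F$-cones, using the cone invariance recorded in Lemma~\ref{l.localtangency} to propagate the graph property; inside the Pesin block the linear Poincar\'e flow is uniformly hyperbolic, so the graph transform converges on uniform scales.

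The core and hardest step is turning the $(N,\epsilon)$-separated data into an actual uniformly hyperbolic invariant set. The segments of orbits joining consecutive returns to $\Gamma_\ell$ can dip arbitrarily close to $\Sing(X)$, so $\Phi_t$ loses uniform hyperbolic control along them; this is exactly the regime where Liao's shadowing lemma (\cite{L89}) applies to the linear Poincar\'e flow $\psi_t$. I would assemble the orbit segments produced by $S_N$ into a quasi-hyperbolic pseudo-orbit in the sense of Liao and invoke his shadowing to obtain a genuine orbit of $\phi_t$ shadowing each pseudo-orbit; varying the choice of segments sweeps out a closed, locally maximal, uniformly hyperbolic invariant set $\Lambda_n$ contained in an arbitrarily small neighborhood of $\supp(\mu)$. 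Injectivity of the shadowing map, together with the $(N,\epsilon)$-separation of $S_N$, yields $h_{top}(\phi_t|_{\Lambda_n})\geq h_\mu-\eta$; choosing $\mu$ initially via the variational principle so that $h_\mu>h_{top}(\phi_t|_{\supp(\mu)})-\tfrac{1}{2n}$ and setting $\eta<\tfrac{1}{2n}$ completes the estimate.

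The main obstacle I expect is verifying Liao's quasi-hyperbolicity hypothesis along the joined pseudo-orbit segments: the excursions near singularities must be short enough (relative to the accumulated hyperbolic expansion/contraction earned inside the Pesin block) that the overall $\psi_t$-product satisfies the domination and hyperbolicity estimates Liao's lemma requires. Establishing this amounts to using that the pseudo-orbits return to $\Gamma_\ell$ with definite frequency, where the exponents are uniformly bounded away from zero, so that any near-singularity excursion is dwarfed by the hyperbolic contribution from the return visits.
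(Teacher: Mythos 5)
Your proposal follows essentially the same route as the paper---Katok's counting argument for the time-one map combined with Liao's shadowing for the Poincar\'e flow to close pseudo-orbits that may pass close to $\Sing(X)$---but it skips over the central technical difficulty that the paper singles out in Remark~\ref{r.katokshadowing}, and the argument does not close without it.

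Liao's shadowing lemma for flows only produces a shadowing orbit up to an uncontrolled time reparametrization $\theta$: its native conclusion is $d(\phi_t(x),\phi_{\theta(t)}(p))\leq\vep\|X(\phi_t(x))\|$, and the period $\theta(T)$ of the shadowing periodic orbit is in general different from the length $T$ of the pseudo-orbit. When excursions near singularities occur, these reparametrizations can vary from one pseudo-orbit to the next. This breaks exactly the step where you claim ``injectivity of the shadowing map, together with the $(N,\epsilon)$-separation of $S_N$'' gives the entropy lower bound: if $x$ and $y$ are $(n,\vep)$-separated for $f=\phi_1$ and shadowed by periodic points $p_x,p_y$ with different reparametrizations $\theta_x,\theta_y$, it does not follow that $p_x$ and $p_y$ are $d^f_n$-separated, so one cannot directly count distinct periodic orbits or bound the entropy of the resulting horseshoe. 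The paper repairs this by extracting from the \emph{proof} of Liao's lemma (via the well-adapted sequence in \cite{G}) a new, unreparametrized estimate, item (d) of Theorem~\ref{t.shadowing}: $d(\phi_t(x),\phi_t(p))\leq L\,d(x,\phi_T(x))$ for all $t\in[0,T]$, together with the Lipschitz control on the time differences. It is this estimate, not the raw shadowing statement, that makes Katok's counting go through. Your proposal also invokes quasi-hyperbolicity for the plain linear Poincar\'e flow $\psi_t$, whereas the paper must use the \emph{scaled} flow $\psi^*_t=\psi_t/\|\Phi_t|_{\langle X\rangle}\|$ to formulate the quasi-hyperbolicity hypothesis so that it remains meaningful on orbit arcs approaching $\Sing(X)$, where $\|X\|$ degenerates; Lemmas~\ref{l.twococycles} and~\ref{l.dominatedsplittingonsupport} are exactly what justify this replacement. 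Without these two ingredients the plan is incomplete.
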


Theorem~\ref{t.lowercontinuiation} and the Corollary~\ref{c.homoclinicclass} are proved in
Section~\ref{s.lowercontinuiation}.

\begin{remark}\label{r.katokshadowing}

We need to note that, the original argument of~\cite{K80} cannot be applied directly on flows, 
even for the flows which are uniformly hyperbolic. The obstruction comes from the difference of the shadowing lemmas for 
diffeomorphisms and for flows. In the latter case, the period of periodic pseudo orbit and the period
of the shadowing periodic orbit are in general different. The key point of our argument is that, 
we need an uniform estimation on the difference of the times-- (d) of Theorem~\ref{t.shadowing}.
As far as the author knows, this estimation is new.

\end{remark}

Immediately, we have the following corollary, which is similar to~\cite{SGW}[Theorem 5.6].
The difference is that here we permit the singularities to be non-hyperbolic.

\begin{corollary}\label{c.homoclinicclass}

Under the assumptions of Theorem~\ref{t.lowercontinuiation}, the chain recurrent class of $\Lambda$ 
contains a non-trivial homoclinic class.

\end{corollary}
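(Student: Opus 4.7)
The plan is to apply Theorem~\ref{t.lowercontinuiation} to extract a hyperbolic set $\Lambda_n$ of positive topological entropy arbitrarily close to $\supp(\mu)$, use classical hyperbolic theory to produce a non-trivial homoclinic class inside $\Lambda_n$, and then verify that this homoclinic class is chain-equivalent to $\Lambda$. Since $\mu$ is a hyperbolic ergodic measure of positive entropy, the variational principle gives $h_{top}(\phi_t|_{\supp(\mu)})\geq h_\mu(\phi_t)>0$. Choosing $n$ so large that $1/n<h_{top}(\phi_t|_{\supp(\mu)})$ and picking a preassigned small neighborhood $V$ of $\supp(\mu)$, Theorem~\ref{t.lowercontinuiation} delivers a hyperbolic set $\Lambda_n\subset V$ with $h_{top}(\phi_t|_{\Lambda_n})>0$.

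Since $\Lambda_n$ is hyperbolic with positive topological entropy, Smale's spectral decomposition produces inside it a locally maximal transitive basic set $\Omega_n$ of positive entropy, hence containing infinitely many periodic orbits all mutually homoclinically related. Fix any periodic orbit $\gamma\subset\Omega_n$; then its homoclinic class $H(\gamma)$ coincides with $\Omega_n$ and is non-trivial. It remains to place $H(\gamma)$ in the chain recurrent class of $\Lambda$. By the standing hypothesis $\Lambda$ is chain recurrent, so for every $\vep>0$ any two points of $\Lambda$ are joined by $\vep$-pseudo-orbits in both directions. Shrinking $V$ beforehand into the $\vep/2$-neighborhood of $\supp(\mu)\subset\Lambda$, any $x\in\Lambda_n$ is within $\vep/2$ of some $y\in\Lambda$, and the forward/backward orbit of $x$ in $\Lambda_n$ stays $\vep/2$-close to $\Lambda$. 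Concatenating a single $\vep$-jump from $y$ to $x$, a finite stretch of the orbit of $x$ inside $\Lambda_n$, another $\vep$-jump back to a nearby point of $\Lambda$, and then an $\vep$-chain inside $\Lambda$ yields $\vep$-chains from any chosen $z\in\Lambda$ to $x$ and back. Letting $\vep\to 0$ places $x$, hence $H(\gamma)=\Omega_n\subset\Lambda_n$, in the chain recurrent class of $\Lambda$.

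The only step carrying any real work is the last one: one must check that the Hausdorff closeness of $\Lambda_n$ to $\supp(\mu)$ provided by Theorem~\ref{t.lowercontinuiation} can actually be used to manufacture $\vep$-jumps matching the pseudo-orbits already available in $\Lambda$. Everything else is immediate from Theorem~\ref{t.lowercontinuiation} combined with classical hyperbolic theory, which identifies any transitive hyperbolic basic set with the homoclinic class of any of its periodic orbits.
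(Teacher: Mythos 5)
The final step is where your argument breaks down, and unfortunately it is the step that actually matters. To place $x$ (and hence $H(\gamma)$) in the chain recurrent class of $\Lambda$, you must exhibit $\vep'$-chains from a fixed $z\in\Lambda$ to a \emph{fixed} $x$ and back for \emph{every} $\vep'>0$. What your construction produces is, for each $\vep$, a hyperbolic set $\Lambda_n^{(\vep)}$ contained in the $\vep/2$-neighborhood of $\supp(\mu)$ together with $\vep$-chains joining $z$ to some $x^{(\vep)}\in\Lambda_n^{(\vep)}$. As $\vep\to 0$, the set $\Lambda_n^{(\vep)}$ (and with it the candidate homoclinic class) changes, so no single homoclinic class is shown to be chain-equivalent to $z$. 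Conversely, if you fix one $\Lambda_n$, the smallest possible jump between $\Lambda$ and $\Lambda_n$ is bounded below by their Hausdorff distance, which is generically positive, so you cannot manufacture $\vep'$-chains for $\vep'$ smaller than that distance. The Hausdorff closeness delivered by Theorem~\ref{t.lowercontinuiation} does not give $\Lambda_n\cap\Lambda\neq\emptyset$, which is what you would need.

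The paper sidesteps this by invoking Theorem~\ref{t.shadowing} directly rather than passing through Theorem~\ref{t.lowercontinuiation}. One takes a Pesin block $\Lambda_0$ of small diameter, a $\tilde\mu$-generic point $x\in\Lambda_0$, and uses the Birkhoff ergodic theorem to produce return times $n_i$ with $f^{n_i}(x)\in\Lambda_0$ and $d(x,f^{n_i}(x))\to 0$. Theorem~\ref{t.shadowing} then yields periodic points $p_i$ shadowing $(x_t)_{t\in[0,n_i]}$ with $d(p_i,x)\leq L\,d(x,f^{n_i}(x))\to 0$ — item (d) of that theorem is precisely the uniform estimate that makes this work. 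Since $\Lambda_0$ has small diameter, all these $p_i$ are pairwise homoclinically related, so $H(p_1)$ contains every $p_i$, hence contains $x\in\supp(\tilde\mu)\subset\supp(\mu)\subset\Lambda$, and therefore meets the chain recurrent class of $\Lambda$; being chain transitive, $H(p_1)$ is contained in it. The point is that the shadowing orbits converge to a point \emph{of} $\Lambda$, not merely to points near $\Lambda$; this is the information you lose by replacing Theorem~\ref{t.shadowing} with the coarser Hausdorff-closeness conclusion of Theorem~\ref{t.lowercontinuiation}.
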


We need the following theorem from~\cite{SGW}:

\begin{theorem}[~\cite{SGW} Theorem C]\label{t.Lorenzlike}

There is a residual subset $\cR_1$ of star flows, such that for every flow contained in $\cR_1$, its non-trivial
Lyapunov stable chain recurrent classes are all Lorenz-like.

\end{theorem}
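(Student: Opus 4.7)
The plan is to combine three ingredients: (i) Liao's uniform hyperbolicity estimates for star flows on the linear Poincar\'e flow over periodic orbits, (ii) genericity properties that let us approximate any chain recurrent class by periodic orbits, and (iii) Lyapunov stability, which controls the behaviour of the splitting near the singularities. First I would fix a $C^1$ residual subset $\cR_1$ of star flows on which standard dichotomies hold: every non-trivial chain recurrent class $\Lambda$ is a Hausdorff limit of periodic orbits (via Crovisier's connecting arguments), the index is constant on periodic orbits inside $\Lambda$, and the chain recurrent classes coincide with the Lyapunov stable pieces of the Conley decomposition. By the star hypothesis (applied to $\phi_t$ and all nearby flows) together with Liao's selecting lemma, the periodic orbits inside $\Lambda$ carry a uniformly dominated splitting $\cN=\cN^s\oplus\cN^u$ of the linear Poincar\'e flow, with uniform contraction and expansion rates that are independent of the period.

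Next I would pass these estimates to $\Lambda$. On the regular part $\Lambda\setminus\Sing(X)$, the dominated splitting of the linear Poincar\'e flow extends by Hausdorff approximation to a splitting $\cN^s\oplus\cN^u$ of $\cN_\Lambda$ with the same uniform bounds. The main technical obstacle is to promote this to a dominated splitting $E^s\oplus F^{cu}$ on the full tangent bundle $T_\Lambda M$ that extends continuously over the singularities in $\Lambda$. The argument goes by a blow-up analysis at each $\sigma\in\Sing(X)\cap\Lambda$: one shows that the flow direction $\RR X(x)$ cannot accumulate on $\cN^s$ as $x\to\sigma$, since otherwise, using Lyapunov stability and the forward invariance of nested neighbourhoods $\{U_i\}$, one would construct an invariant direction on which $\Phi_t$ is contracted, contradicting either the star hypothesis (no non-hyperbolic critical elements nearby) or the Lyapunov stable character of $\Lambda$. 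This forces $E^s$ (defined as the unique extension of $\cN^s$) to remain uniformly contracting, and hence provides the desired dominated splitting on all of $T_\Lambda M$.

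The third step upgrades $F^{cu}$ to a sectional-expanding bundle. On periodic orbits in $\Lambda$, the subspaces of dimension $\geq 2$ inside $F^{cu}$ must be uniformly area-expanding: if some 2-plane failed to expand along a periodic orbit $\gamma\subset\Lambda$, a small $C^1$ perturbation (allowed on $\cR_1$ via Franks' lemma for flows in the spirit of Gan--Wen) would create a non-hyperbolic periodic orbit nearby, contradicting the star condition. Passing to Hausdorff limits of periodic orbits, sectional expansion extends to all of $\Lambda$, giving condition (c) of Definition~\ref{d.lorenzlike}. Finally, non-triviality plus Lyapunov stability force $\Lambda$ to contain a singularity (otherwise, by the obtained uniform hyperbolicity on the normal bundle together with Lyapunov stability, $\Lambda$ would be a hyperbolic attractor whose chain recurrent class reduces to a single hyperbolic periodic orbit, i.e.\ trivial) and also regular points (otherwise $\Lambda$ is a single singularity).

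The hardest step is clearly the extension of the splitting across the singularities together with the verification of sectional hyperbolicity for $F^{cu}$: both require controlling the relation between the tangent flow $\Phi_t$ and the linear Poincar\'e flow $\psi_t$ when orbits get arbitrarily close to $\Sing(X)$, and leveraging Lyapunov stability to rule out the pathological configurations that would otherwise prevent the splitting of $\cN$ from lifting to a splitting of $TM|_\Lambda$ compatible with the star condition.
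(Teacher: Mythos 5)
Note first that this paper does not prove Theorem~\ref{t.Lorenzlike} at all: it is imported verbatim from \cite{SGW} as Theorem~C, so there is no proof in the present source for your proposal to be checked against. With that caveat, your outline does capture, at the level of strategy, the ingredients that \cite{SGW} and the earlier Liao/Gan--Wen work use: Liao's estimates give a uniformly dominated splitting of the linear Poincar\'e flow over periodic orbits of a star flow; generic recurrence properties (Bonatti--Crovisier) let one approximate a non-trivial chain recurrent class in the Hausdorff topology by periodic orbits of the same index; the splitting on $\cN$ is pushed to the closure; the delicate part is extending it across singularities and promoting domination on the normal bundle to a sectional-hyperbolic splitting of $T_\Lambda M$. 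Your paragraph acknowledging that the extension over singularities is the hard step is accurate, but the proposal gives no actual mechanism for it (the blow-up/scaled-Poincar\'e-flow analysis is exactly where \cite{SGW} do most of the work), so as written it is a plan rather than a proof.

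More importantly, your final step contains a genuine error. You argue that non-triviality plus Lyapunov stability force $\Lambda$ to contain a singularity because otherwise ``$\Lambda$ would be a hyperbolic attractor whose chain recurrent class reduces to a single hyperbolic periodic orbit, i.e.\ trivial.'' That implication is false: a hyperbolic attractor of a flow is in general a non-trivial chain recurrent class. Suspensions of Plykin or Smale solenoid attractors, or Anosov flows, are star flows possessing non-trivial, Lyapunov stable, hyperbolic chain recurrent classes with no singularity, and by Definition~\ref{d.lorenzlike} such a class is \emph{not} Lorenz-like, since the definition explicitly requires singularities. So either the statement of Theorem~\ref{t.Lorenzlike} as transcribed from \cite{SGW} is missing a hypothesis (the precise version in \cite{SGW} concerns Lyapunov stable classes \emph{containing singularities}, or is phrased as a dichotomy ``hyperbolic or Lorenz-like''), or your deduction must be discarded. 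In either case, the step in your proposal that tries to manufacture a singularity inside every non-trivial Lyapunov stable class does not hold and cannot be repaired as stated.
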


In Appendix~\ref{apendix.isolation}, we will also show that:

\begin{theorem}\label{t.isolated}

There is residual subset $\cR_2$ of $\C^1$ flows, such that for every flow contained in $\cR_2$, if 
$\Lambda$ is a Lorenz-like class which contains a periodic orbit, then it is an attractor.

\end{theorem}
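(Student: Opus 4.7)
My strategy is to derive $\Lambda=\overline{W^u(\gamma)}$ from Lyapunov stability together with a $\C^1$-generic identification of chain recurrent classes with homoclinic classes, and then to convert this equality into the attractor property via the local strong stable foliation. For the residual subset $\cR_2$ I would take the intersection of the Kupka--Smale residual (hyperbolic critical elements, transverse invariant manifolds) with the flow version of Bonatti--Crovisier's residual on which every chain recurrent class containing a periodic orbit coincides with its homoclinic class; the latter follows from Hayashi-type connecting lemmas for flows.

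Fix $\phi_t\in\cR_2$ and a Lorenz-like class $\Lambda$ containing a periodic orbit $\gamma$. Since $E^s$ is uniformly contracting, the flow direction $X|_\gamma$ cannot lie in $E^s$, hence $X|_\gamma\subset F^{cu}$; applying sectional expansion to two-planes in $F^{cu}$ containing $X$ then shows that $\pi(F^{cu})$ is uniformly expanded by the linear Poincar\'e flow, so $\gamma$ is a hyperbolic saddle with $\dim W^s(\gamma)=\dim E^s+1$ and $\dim W^u(\gamma)=\dim F^{cu}$. Lyapunov stability forces $\overline{W^u(\gamma)}\subset\Lambda$: for any $x\in W^u(\gamma)$ and any $i$, choose $T$ with $\phi_{-T}(x)\in U_{i+1}$, so that $x=\phi_T(\phi_{-T}(x))\in\phi_T(U_{i+1})\subset U_i$, whence $x\in\bigcap_i U_i=\Lambda$. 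Conversely, the generic hypothesis gives $\Lambda=H(\gamma)$, and since transverse homoclinic points lie in $W^u(\gamma)$ we have $H(\gamma)\subset\overline{W^u(\gamma)}$; combining, $\Lambda=\overline{W^u(\gamma)}$.

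To produce the attracting neighborhood I would build a trapping region from the local strong stable foliation $\cF^{ss}$ defined on a neighborhood of $\Lambda$, with uniform leaf size $r>0$ and leaves tangent to $E^s$ along $\Lambda$. Set $V=\bigcup_{x\in\Lambda}\cF^{ss}_r(x)$. By $\phi_t$-invariance of $\Lambda$ and uniform contraction of strong stable leaves, $\phi_t(V)\subset\bigcup_{x\in\Lambda}\cF^{ss}_{\lambda^t r}(x)$ sits in the interior of $V$ for large $t$, and $\bigcap_{t\geq 0}\phi_t(V)=\Lambda$. The key remaining point is that $V$ contains an open neighborhood of $\Lambda$: at each $x\in W^u(\gamma)$, $W^u_\loc(\gamma)$ is a full $F^{cu}$-disk through $x$, and the local product structure $\cF^{ss}_r\times W^u_\loc(\gamma)$ makes $V$ a neighborhood of $x$; continuity of $\cF^{ss}$ together with $\Lambda=\overline{W^u(\gamma)}$ is then used to propagate this neighborhood property to all of $\Lambda$.

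The main obstacle is this last step, namely showing that the local product structure available at points of $W^u(\gamma)$ extends uniformly to limit points $x\in\overline{W^u(\gamma)}\setminus W^u(\gamma)$, so that $V$ truly contains $\Lambda$ in its interior. Continuity of $\cF^{ss}$ and the uniform size of its leaves handle the stable direction, but the $F^{cu}$ direction requires a uniform ``cu-thickness'' argument that exploits the forward invariance of $W^u(\gamma)$ under the flow together with the sectional expansion of $F^{cu}$. A secondary difficulty is establishing the flow version of Bonatti--Crovisier in the sectional-hyperbolic setting, but the Kupka--Smale condition in $\cR_2$ makes all singularities hyperbolic, which reduces it to standard connecting-lemma arguments for flows.
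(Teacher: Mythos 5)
Your overall strategy — identify $\Lambda$ with $\overline{W^u(\gamma)}$ via a flow version of Bonatti--Crovisier, then exhibit a trapping region built from strong-stable leaves — is genuinely different from the paper's. The paper argues by contradiction: assuming $\Lambda$ is not isolated, it produces a nearby chain class $\cC_n$ whose periodic orbits accumulate on a singularity $\sigma\in\Lambda$ from one side of $W^{ss}(\sigma)$, and then uses Proposition~\ref{p.generic} (a connecting-lemma genericity giving $W^u(\Orb(p))\pitchfork W^{s,\pm}(\sigma)\neq\emptyset$ whenever $\Lambda$ meets that branch) together with the $\lambda$-lemma and Lemma~\ref{l.lyapunov} to force $\cF^s(x^n)\cap\Lambda\neq\emptyset$, hence $\cC_n=\Lambda$, a contradiction. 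That route avoids ever having to show that $\bigcup_{x\in\Lambda}\cF^{ss}_r(x)$ is a neighborhood of $\Lambda$, which is precisely where your argument stops short.

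The gap you flag at the end is not a technical nuisance; it is the entire content of the theorem, and as stated your propagation step would fail. The set $V=\bigcup_{x\in\Lambda}\cF^{ss}_r(x)$ need not be a neighborhood of a singularity $\sigma\in\Lambda$, because $\Lambda=\overline{W^u(\gamma)}$ can a priori accumulate on $\sigma$ from only one side of $W^{ss}(\sigma)$ in the weak-stable center direction $E^{c}_1$; then near $\sigma$ the union $V$ misses an open half-ball. Continuity of $\cF^{ss}$ cannot fix this since the local unstable disks $W^u_{\loc}(\gamma)$ through points $y\in W^u(\gamma)$ shrink as $y\to W^s(\sigma)$ (the sectional expansion of $F^{cu}$ controls two-planes, not lengths of $E^{c}_1$-directions), so there is no uniform local product structure to pass to the limit $\sigma\in\Lambda\setminus W^u(\gamma)$. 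In fact, for a genuinely one-sided class $V$ simply is not a trapping region, and one would need a larger trapping set obtained by some other means; whereas the paper sidesteps this entirely by proving isolation (no other chain class nearby), from which Lyapunov stability already yields the attractor property. If you wish to pursue your route, you would essentially need to reprove something equivalent to Proposition~\ref{p.generic} and then a genuinely new argument for the $cu$-thickness of $V$ near $\sigma$, which is a hard open-ended problem in dimensions where $\dim F^{cu}>2$.
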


\section{Positive entropy\label{s.positiveentropy}}

In this section, we give the proof of Theorem~\ref{t.Flowboundary}, Corollaries~\ref{c.Fentropyconjecture} 
and~\ref{c.transtive}. We assume $g$ is a diffeomorphism which admits a dominated splitting $E\oplus F$, 
$a$ and $\vep_0$ are defined as in Subsection~\ref{ss.dominated splitting}. $K_0$ is given by Lemma~\ref{l.onesteplocalvolume}.

\begin{lemma}\label{l.localvolume}

Suppose $D$ is a disk with dimension $dim(F)$ and tangent to local $F$ cone. Then 
for any $x\in D$ and $\vep<\frac{\vep_0}{\|g\|_{\C^1}}$, one has $\Leb(g^n(D\cap B_n(x,\vep)))
\leq K_0$.

\end{lemma}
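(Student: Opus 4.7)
The plan is to reduce the bound on $\Leb(g^n(D\cap B_n(x,\vep)))$ to Lemma~\ref{l.onesteplocalvolume} by showing that the forward iterate $g^n(D\cap B_n(x,\vep))$ sits inside a $\vep_0$-ball around $g^n(x)$ and is tangent to the local $F$ cone at $g^n(x)$. The Bowen-ball condition is exactly what forces the intermediate iterates to stay small enough for the cone invariance to propagate.

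More concretely, set $D' = D\cap B_n(x,\vep)$. Since $D$ is tangent to the local $F$ cone, $D\subset \C_a(F_x)$, so in particular $D'\subset \C_a(F_x)$; that is, $D'$ is tangent to the local $F$ cone at $x$. By definition of the Bowen ball, $g^i(D')\subset B_\vep(g^i(x))$ for every $0\leq i\leq n-1$. Since $\vep<\vep_0/\|g\|_{\C^1}$, Lemma~\ref{l.localtangency} (applied with $n$ replaced by $n-1$) gives that $g^{n-1}(D')$ is tangent to the local $F$ cone at $g^{n-1}(x)$.

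Next I would apply the forward invariance of local cones one more step: since $g^{n-1}(D')\subset B_\vep(g^{n-1}(x))\subset B_{\vep_0/\|g\|_{\C^1}}(g^{n-1}(x))$, the cone inclusion
$$g\bigl(\C_a(F_{g^{n-1}(x)})\cap B_{\vep_0/\|g\|_{\C^1}}(g^{n-1}(x))\bigr)\subset \C_{\lambda a}(F_{g^n(x)})$$
gives that $g^n(D')$ is tangent to the local $F$ cone at $g^n(x)$. At the same time, the Lipschitz bound yields $g^n(D')\subset B_{\vep\|g\|_{\C^1}}(g^n(x))\subset B_{\vep_0}(g^n(x))$.

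Finally, I would invoke Lemma~\ref{l.onesteplocalvolume} applied to the disk $g^n(D')\subset B_{\vep_0}(g^n(x))$ tangent to the local $F$ cone, giving the desired bound $\Leb(g^n(D'))\leq K_0$. There is no real obstacle here; the only subtlety worth flagging is the bookkeeping between the two flavors of tangency (\emph{tangent to local $F$ cone at a point} versus \emph{tangent to local $F$ cone} at every point of the disk), but the hypothesis on $D$ ensures the pointwise version holds at $x$, which is exactly what Lemma~\ref{l.localtangency} needs to propagate, and the final application of Lemma~\ref{l.onesteplocalvolume} only uses local-cone tangency at the center $g^n(x)$ of the ambient $\vep_0$-ball.
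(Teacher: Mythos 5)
Your proof follows the paper's argument exactly: reduce via Lemma~\ref{l.localtangency} to show $g^n(D\cap B_n(x,\vep))$ is tangent to the local $F$ cone and lies in a $\vep_0$-ball around $g^n(x)$, then apply Lemma~\ref{l.onesteplocalvolume}. The only difference is cosmetic bookkeeping: you apply Lemma~\ref{l.localtangency} up to step $n-1$ (which is exactly what the Bowen ball controls) and then push through one more step of cone invariance by hand, which is slightly more careful than the paper's one-line invocation and correctly accounts for the fact that the Bowen ball only constrains iterates $0\le i\le n-1$.
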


Without the assumption on dominated splitting, the previous lemma is false even for analytic 
diffeomorphism, e.g., see~\cite{BLY}.

\begin{proof}

By the definition of dynamical ball $B_n(x,\vep)$ and Lemma~\ref{l.localtangency}, 
$D_n=f^n(D\cap B_n(x,\vep))$ is tangent to local $F$ cone and is contained in $B_\vep(x)$. 
By Lemma~\ref{l.onesteplocalvolume}, the volume of $D_n$ is bounded by $K_0$.

\end{proof}

\begin{remark}\label{r.SX}

As we mentioned in the introduction, the proofs of~\cite{SX10} about Theorem~\ref{t.Flowboundary}
and Corollary~\ref{c.Fentropyconjecture} do not work. A main reason is because in their proof of~\cite{SX10}[Proposition 2], 
they need $f^n(D\cap B_n(x,\vep))$ to be almost a ball, which occurs only when the bundle $F$ is uniformly 
expanding. In general, this set is even not connected.

\end{remark}

\begin{proof}[Proof of Theorem~\ref{t.Flowboundary} :]

For any $\delta>0$, choose $D$ a disk which is tangent to the $F$ cone and satisfies 
$v_F(D)\geq v_F(f)-\delta$, that is:

$$\limsup_n \frac{1}{n}\log(\Leb(f^n(D)))\geq v_F-\delta.$$

Take $\vep>0$ and $\Gamma_n=\{x_1,\dots,x_{r_n(D,\vep)}\}$ an ($n,\vep$)-spanning set of 
$D$. By Lemma~\ref{l.localvolume}, $\Leb(f^n(D\cap B_n(x,\vep)))\leq K_0$.
Observe that $\{B_n(x_i,\vep)\}_{i=1}^{r_n(D,\vep)}$ is a cover of $D$, we have
$$\Leb(f^n(D))\leq \sum_i \Leb(f^n(D\cap B_n(x_i,\vep)))\leq K_0r_n(D,\vep).$$
Hence,
\begin{equation*}\label{eq:12}
\begin{split}
& \limsup_n \frac{1}{n}\log (r_n(D,\vep))\geq \limsup \frac{1}{n}\log(\Leb(f^n(D))/K_0) \\
& = \limsup \frac{1}{n}\log(\Leb f^n(D)) \geq v_F-\delta. \\
\end{split}
\end{equation*}
This implies that $h_{top}(g,D)\geq v_F-\delta$.
Since $\delta$ can be chosen arbitrarily small, we conclude the proof of Theorem~\ref{t.Flowboundary}.

\end{proof}

Now let us begin the proof of Corollaries~\ref{c.Fentropyconjecture} and~\ref{c.transtive}:

\begin{proof}[Proof of Corollary~\ref{c.Fentropyconjecture}:]

It is well known that the supremum of volume expansion of disks with dimension $dim(F)$ 
is larger than or equal to $\sp(f_{*,dim(F)})$ (for example, see p.287~\cite{Yom87}). 
Now we will show that the same proof also provides that
$$v_F(f)\geq \sp(f_{*,dim(F)}).$$

As explained in~\cite{Yom87}: {\it the norm of the homology class is bounded by integrals 
of some fixed differential forms on this class; these integrals, in turn, are bounded by 
the volume of the chain, representing this class.} We may choose a chain such that every 
simplex is transverse to the $E$ bundle. Because of the dominated splitting, under the 
iteration of $f$, the image of every simplex is finally tangent to the $F$ cone. Hence, 
their volume expansion speed will be bounded by $v_F(f)$. Then the previous argument 
still works here.

As a corollary of Theorem~\ref{t.Flowboundary}, we show that $h_{top}(f)\geq \sp(f_{*,dim(F)})$. 

\end{proof}

\begin{proof}[Proof of Corollary~\ref{c.transtive}:]

Because $f$ is robustly transitive, by~\cite{BDP03}, $f$ admits a dominated splitting 
$E_1\oplus \dots \oplus E_k$ where $E_1$ is volume contracting and $E_k$ is volume 
expanding. More precisely, there is $\lambda<1$ and $C>0$ such that for any $x\in M$

$$\det(Df^n|_{E_1(x)})< C \lambda^n\;\;\; \text{and}\;\;\; \det(Df^n|_{E_k(x)})> C \lambda^{-n}.$$ 

Now we claim that $v_{E_k}(f)\geq -\log\lambda$. Then by Theorem~
\ref{t.Flowboundary}, we finish the proof.

It remains to prove this claim. Take $D$ any disk tangent to $F$ cone. By the invariance 
of $F$ cone, $f^n(D)$ is still tangent to $F$ cone for any $n>0$. Then the determine 
of the tangent map of $f^n|D$ is close to $\det(Df^n|_F)$, which means that 

$$\Leb(f^n(D))\geq C\lambda^{-n}.$$

Hence $v_F(D)=\limsup \frac{1}{n}\log \Leb(f^n(F)) \geq -\log \lambda$.

\end{proof}

\section{Entropy expansiveness \label{s.entropy}}

Throughout this section, let $\Lambda$ be a Lorenz-like class which admits 
a sectional-hyperbolic splitting $E^s\oplus F^{cu}$.

The structure of this section is the following. In Subsection~\ref{ss.mainA}, we use 
Proposition~\ref{p.lorenzexpansive} to prove Theorem~\ref{main.A}. The proof of 
Proposition~\ref{p.lorenzexpansive} on the entropy expansiveness of Lorenz-like class 
will be given in Subsection~\ref{ss.entropyexpansiveness}.

\subsection{Proof of Theorem~\ref{main.A}\label{ss.mainA}}

First let us recall the proof of Theorem~\ref{t.Flowboundary}, where we only use the forward invariance of 
the $F$ cone. In a small attracting neighborhood of $\Lambda$, the 
$F$ cone is well defined and forward invariant, hence, Theorem~\ref{t.Flowboundary} still works.

\begin{proof}[Proof of Theorem~\ref{main.A}:]
By the definition of Lorenz-like class, $\Lambda$ is a compact invariant set of the time-one map
$f$ and admits a dominated splitting $E^s\oplus F^{cu}$, where $F^{cu}$ is volume expanding. 
We take a forward invariant $F^{cu}$ cone on $\Lambda$.

Let $\{U_i\}_{i=1}^\infty$ be a sequence of neighborhoods of $\Lambda$ as in
Definition~\ref{d.lorenzlike}, i.e., they satisfy:
\begin{itemize}

\item $U_1\supset U_2\dots$ and $\cap U_i=\Lambda$;

\item $f^n(U_{i+1})\subset int(U_i)$ for any $i,n\in \mathbb{N}$.

\end{itemize}

Assume 
$U_1$ is sufficiently small, we may extend the $F^{cu}$ cone to $U_1$ which is still forward 
invariant. Then for any disk $D\subset U_1$ which is tangent to $F$ cone, $f^n(D)$ is 
still tangent to $F$ cone. Moreover, because $F^{cu}|_{\Lambda}$ is volume expanding, there 
exist $C>0$ and $0<\lambda<1$ such that:

$$\Leb(f^n(D))\geq C\lambda^{-n}.$$ 
This implies $v_F(D)\geq -\log \lambda$.

For each $i\in \mathbb{N}$, applying Theorem~\ref{t.Flowboundary} on $f|_{U_i}$, we deduce that $h_{top}(f|_{U_i})\geq
-\log \lambda$. By the variation principle, there is an $f$ ergodic invariant measure 
$\mu_i$ supported on $U_i$ with $h_{\mu_i}(f)\geq -\log \lambda-\frac{1}{i}$. Replacing by a subsequence, 
we suppose that $\mu_i\to\mu_0$ in the weak-* topology, where $\mu_0$ is an invariant 
measure supported on $\Lambda$. As shown in Lemma~\ref{l.flowexpansive}, entropy expansiveness implies 
the metric entropy function is upper semi-continuous, this implies that
$h_{\mu_0}(f)\geq-\log \lambda$. Therefore, by the definition of topological entropy for flows, we conclude that
$$h_{top}(\phi_t|_{\Lambda})= h_{top}(f|_{\Lambda})\geq h_{\mu_0}(f)\geq-\log \lambda,$$
the proof is complete.
\end{proof}

\subsection{Entropy expansiveness\label{ss.entropyexpansiveness}}

Let $U$ be a small neighborhood of $\Lambda$, such that the maximal invariant 
set of $\phi_t|_{U}$, denoted by $\tilde{\Lambda}$, is sectional-hyperbolic. This means, 
$\phi_t|_{\tilde{\Lambda}}$ admits a dominated splitting $E^{s}\oplus F^{cu}$ and there is 
$0<\lambda_0<1$ such that, for any 2-dimensional subspace $\Sigma_x\subset F^{cu}(x)$ ($x\in \tilde{\Lambda}$), 
we know that $\det(Tf_x|_{\Sigma_x})>\frac{1}{\lambda_0}$. We may assume the above inequality 
also holds for any 2-dimensional subspace contained in $2a_0$ $F^{cu}$-cone for 
$a_0$ sufficiently small. In order to simplify the notation, we write briefly $x_t=\phi_t(x)$.

The proof of Proposition~\ref{p.lorenzexpansive} occupies the rest of this subsection. 
We need notice that although this result holds for flow, in the proof, we will only consider 
the time-one diffeomorphism. The main reason is because we need use the fake foliations, 
which are locally invariant for the diffeomorphism, but not for the flow! Now let us sketch the proof.

Although the singularities can be non-isolated--by non-hyperbolicity,  
in Subsubsection~\ref{sss.singularity} we will show that typical points of $\tilde{\Lambda}$ 
can only approach finitely many singularities in $\Sing(X)$. Let us explain this fact with more details. 
It will be shown that $f|_{\Sing(X)\cap\tilde{\Lambda}}$ admits a partially hyperbolic splitting with a 
1-dimensional center direction, and the singularities are either isolated or contained 
in finitely many center segments. Moreover, typical points of $\tilde{\Lambda}$ cannot 
approach the interior of each center segment, and when a singularity is approached by a typical point 
from one direction along the center, then the corresponding branch of center leaf is (topologically) 
contracting and with uniform size.

As a standard argument in~\cite{LVY}, the infinite Bowen ball is contained in a fake $cu$ 
leaf. In Subsubsection~\ref{sss.expandinginculeaf}, we will see that the distance between 
different flow orbits contained in the same infinity Bowen ball is expanding. There are two
kinds of analysis: the orbits are away from singularities or close to singularities. We need notice the
reader that, in the latter case, we cannot use linearization for the dynamics close to a singularity,
because which can not be hyperbolic. In fact, a new argument is used to analyze these orbits
which are close to singularities. More precisely, we use two different kinds of fake foliations-- 
the fake foliations defined in a neighborhood of singularities which come from the partially 
hyperbolic splitting on $\Sing(X)$; and the fake foliations come from the dominated splitting 
on $\tilde{\Lambda}$--to build a local product structure in $cu$ leaves.

In Subsubsection~\ref{sss.entropyexpansive}, we give the proof of Proposition~\ref{p.lorenzexpansive}.

\subsubsection{Singularities and center segments\label{sss.singularity}}

\begin{lemma}\label{l.singularities}

$f|_{\Sing(X)\cap \tilde{\Lambda}}$ admits a partially hyperbolic splitting $E^s\oplus E^c\oplus E^u$, where $E^c$ 
is a 1-dimensional bundle corresponding to an eigenvalue with norm less than or equal to one.

\end{lemma}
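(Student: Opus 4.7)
The plan is to fix any singularity $\sigma\in\Sing(X)\cap\tilde\Lambda$ and refine the sectional-hyperbolic splitting $E^s(\sigma)\oplus F^{cu}(\sigma)$ by further decomposing $F^{cu}(\sigma)=E^c(\sigma)\oplus E^u(\sigma)$ with $\dim E^c(\sigma)=1$. Since each singularity is a fixed point of $f$, invariance of the refined splitting is automatic at each $\sigma$, and the whole question reduces to a linear-algebraic analysis of $Df(\sigma)|_{F^{cu}(\sigma)}$ combined with the sectional-expanding condition.

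First I would establish that $Df(\sigma)|_{F^{cu}(\sigma)}$ has at most one eigenvalue of modulus $\le 1$. Given any two real eigenvectors, or the real and imaginary parts of a complex conjugate pair of eigenvectors, their real span $V\subset F^{cu}(\sigma)$ is a $Df(\sigma)$-invariant two-plane, and the sectional-expansion assumption gives $|\det(Df(\sigma)|_V)|\ge Ke^{\lambda}>1$ after replacing $f$ by a uniform iterate if necessary. Hence any two eigenvalues of $Df(\sigma)|_{F^{cu}(\sigma)}$, counted with multiplicity, have modulus product strictly greater than one, so at most one has modulus $\le 1$. The same inequality also rules out a complex conjugate pair at the bottom of the spectrum, so if a slow eigenvalue exists it is real with a one-dimensional eigenspace.

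Next I would produce at least one such eigenvalue. Because $\tilde\Lambda$ is a chain-recurrent sectional-hyperbolic set containing regular orbits, $\sigma$ is accumulated by regular points $x_n\in\tilde\Lambda$. The sectional-hyperbolic structure forces $X(x)\in F^{cu}(x)$ on the regular part of $\tilde\Lambda$ (Lemma \ref{l.bothdominatedsplitting} treats the transitive case, and the general case is analogous), so after extracting a subsequence the unit vectors $X(x_n)/\|X(x_n)\|\in F^{cu}(x_n)$ converge to some unit vector $v\in F^{cu}(\sigma)$. Passing to the limit in $Df(x_n)X(x_n)=X(f(x_n))$ along orbits that stay close to $\sigma$ for a definite positive time, and using continuity of $Df$, one checks that $v$ is a real eigenvector of $Df(\sigma)$ with eigenvalue $\mu=\lim\|X(f(x_n))\|/\|X(x_n)\|$. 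The Lyapunov stability of $\Lambda$ from Definition \ref{d.lorenzlike}(b) then forces $|\mu|\le 1$: otherwise, combined with the first step, all of $F^{cu}(\sigma)$ would be uniformly expanding, $\sigma$ would be a hyperbolic saddle whose local unstable manifold has dimension $\dim(F^{cu})$, and a short arc of that unstable manifold starting in $U_{i+1}$ would escape $U_i$, contradicting (b). The case in which $\sigma$ is non-isolated in $\Sing(X)$ is easier: the tangent direction to the local one-dimensional curve of singularities through $\sigma$ is a $Df(\sigma)$-invariant line with eigenvalue exactly $1$, and can be taken as $E^c(\sigma)$.

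Finally I would set $E^c(\sigma)$ equal to the one-dimensional eigenspace just produced and $E^u(\sigma)$ to be the $Df(\sigma)$-invariant complement in $F^{cu}(\sigma)$ spanned by the remaining generalized eigenvectors, all of which have modulus $>1$ by the first step. The uniform spectral gap between $E^c$ and $E^u$, coming from the sectional-expanding inequality on the compact set $\Sing(X)\cap\tilde\Lambda$ together with continuity of $DX$, yields the domination $E^c\prec E^u$, while $E^s\prec E^c$ is inherited from $E^s\prec F^{cu}$. The main obstacle is the third step, namely producing a slow real eigenvalue at every singularity: the Lyapunov-stability argument must handle non-hyperbolic $\sigma$ without invoking classical unstable manifolds, relying instead on the nested-neighborhood structure from Definition \ref{d.lorenzlike}(b) to control the local dynamics near such a $\sigma$.
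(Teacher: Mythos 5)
Your first step (at most one eigenvalue of modulus at most one, and that any such eigenvalue must be real with a one–dimensional eigenspace, by testing the sectional–expansion inequality on $2$–planes spanned by eigenvectors or real/imaginary parts of conjugate pairs) is correct and in fact makes explicit a point the paper's proof glosses over. The genuine problems are in your existence argument and in the way you propose to conclude $|\mu|\le 1$.

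The construction of the eigenvector in your second step has a gap. From $Df(x_n)X(x_n)=X(f(x_n))$ and $x_n\to\sigma$ you may extract a subsequence along which $X(x_n)/\|X(x_n)\|\to v$, and then $Df(\sigma)v=\lim X(f(x_n))/\|X(x_n)\|$. But this limit equals $\mu v$ only if you also know that $X(f(x_n))/\|X(f(x_n))\|\to v$; a priori it converges (along a further subsequence) to some other unit vector $w$, and all you get is $Df(\sigma)v=\mu w$. Staying close to $\sigma$ for a definite time does not fix this — you get a finite chain $v_0\mapsto v_1\mapsto\cdots$ in $\mathbb{P}(F^{cu}(\sigma))$ under the projectivized $Df(\sigma)$, but no reason for any $v_k$ to be a fixed point of that projective map. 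The paper sidesteps this entirely by arguing by contradiction: assume no eigenvalue of $Df(\sigma)|_{F^{cu}(\sigma)}$ has modulus $\le 1$, so $\sigma$ is an isolated hyperbolic saddle; use the chain–recurrence of $\Lambda$ to produce a regular point $y^0\in\Lambda\cap(W^s(\sigma)\setminus\sigma)$; observe that $X(y^0)$ is then tangent to the $E^s$ cone; and derive the contradiction that the backward images $f^{-n}(l)$ of a short flow arc $l$ through $y^0$ would have unbounded length, whereas the length of any image of a flow arc is bounded in terms of $\|X\|$ (inequality~\eqref{e.bounded}). That last quantitative fact is the crux, and it has no counterpart in your proposal.

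Your third step does not actually produce a contradiction. You claim that if $|\mu|>1$ then $F^{cu}(\sigma)$ is fully expanding, $\sigma$ is a saddle with $\dim W^u(\sigma)=\dim F^{cu}$, and ``a short arc of that unstable manifold starting in $U_{i+1}$ would escape $U_i$, contradicting (b).'' But condition (b) says $\phi_t(U_{i+1})\subset U_i$ for $t\ge 0$: a forward orbit starting in $U_{i+1}$ \emph{cannot} leave $U_i$, so there is nothing to contradict. In fact Lyapunov stability forces $W^u(\sigma)\subset\Lambda$ (that is exactly Lemma~\ref{l.lyapunov} in the appendix), which is perfectly consistent with (b); the absurdity has to come from chain recurrence plus the flow–arc length estimate, not from the nested neighborhoods alone. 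Finally, your treatment of the non‑isolated case asserts the existence of ``the local one‑dimensional curve of singularities through $\sigma$,'' but that one‑dimensional structure of $\Sing(X)\cap\tilde\Lambda$ is only available \emph{after} the partially hyperbolic splitting has been established (cf.~Remark~\ref{r.singularitiesstructure}); assuming it here is circular. The correct order is: prove the existence of a slow eigenvalue for every $\sigma$ by the contradiction argument, set $E^c$ to be its real eigendirection, then set $E^u$ to be the complementary expanding sum, and only afterwards deduce the segment structure of the singular set.
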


\begin{proof}

We will only prove that the singularities contained in $\Lambda$ admit such a partially hyperbolic splitting. 
This is because when we choose $U$ sufficiently small, $\Sing(X)\cap\tilde{\Lambda}$ is contained in a small neighborhood 
of $\Sing(X)\cap \Lambda$, and then induced the same partially hyperbolic splitting.
 
Recall that $\Lambda$ admits a dominated splitting $E^s\oplus F^{cu}$ where $F^{cu}$ is sectional-expanding. It suffices to prove that for any singularity $\sigma\in \Lambda$, $Tf|_{F^{cu}(\sigma)}$ has 
one eigenvalue with norm less than or equal to one. Suppose this fact and denote $E^c$ the direction of 
the eigenvector. Then the splitting $F^{cu}|_{\Sing(X)\cap \Lambda}=E^c\oplus E^u$ is continuous, invariant, 
and the latter subbundle is uniformly expanding. Therefore this splitting is a dominated splitting, 
and $E^s\oplus E^c\oplus E^u$ is the corresponding partially hyperbolic splitting on $\Sing(X)\cap \Lambda$.

It remains to show that for any singularity $\sigma\in \Lambda$, $Tf|_{F^{cu}(\sigma)}$ has a eigenvalue with norm 
less than or equal to one. Suppose by contradiction that there is a singularity $\sigma\in \Lambda$ such that
$Tf|_{F^{cu}(\sigma)}$ is uniformly expanding. Then $\sigma$ is an isolated hyperbolic saddle of $f$. We claim that $\Lambda\cap (W^s(\sigma)\setminus \sigma)\neq \emptyset$.

The proof of this claim is standard. By the definition of Lorenz-like class, $\Lambda$ contains a regular
point $x$. Then for any $n\in \mathbb{N}$, there is a $\frac{1}{n}$ pseudo orbit $\{\phi_t(x); 0\leq t\leq t_1\};\dots;\{\phi_t(x^{k_n});0\leq t\leq t_{k_n}\}$ contained in $\Lambda$ with $t_1,\dots,t_{k_n}\geq 1$ 
such that $d(\phi_{t_{k_n}}(x^{k_n}),\sigma)\leq \frac{1}{n}$. Let $y^n$ be the last time this pseudo orbit 
enters $B_\vep(\sigma)$ for some small $\vep$. By taking a subsequence, we may assume that $\lim y^n=y^0$. 
Then $y^0\in \Lambda\setminus \sigma$ and $\phi_t(y^0)\in B_\vep(\sigma)$ for any $t>0$. This implies 
that $y^0\in W^s(\sigma)\setminus \sigma$, which proves the claim.

By the invariance of the stable manifold of $\sigma$, $\phi_t(y^0)\subset W^s(\sigma)$ for $t\in \mathbb{R}$. 
This implies in particular that $X(y^0)$ is contained in the $E^s$ cone. Take a local orbit $l= (\phi_t(y^0))_{t\in [-\delta,\delta]}$, which is tangent to $E^s$ cone. By the expansion of vectors in $\cC(E^{s})$ under the iteration of $f^{-1}$, $\length(f^{-n}(l))\rightarrow \infty$, which contradicts the fact that
\begin{equation}\label{e.bounded}
\length(f^{-n}(l))\leq \frac{\max\{\|X(x)\|; x\in \Lambda\}}{\min\{\|X(y)\|; y\in l\}}\length(l)
\end{equation}
is uniformly bounded.
\end{proof}




The bundles in a dominated splitting are in general not integrable. The following lemma was
borrowed from~\cite{LVY}[Lemma 3.3] (see also~\cite{BW}[Proposition 3.1]) which shows that one can always 
construct local fake foliations. Moreover, these fake foliations have local product structure, and this 
structure is preserved as soon as they stay in a neighborhood.

\begin{lemma}\label{l.fakefoliation}[\cite{LVY}[Lemma 3.3]]
Let $K$ be a compact invariant set of $f$. Suppose $K$ admits a dominated splitting $T_KM=E^1
\oplus E^2\oplus E^3$. Then there are $\rho>r_0>0$ such that the neighborhood $B_\rho(x)$ of 
every $x\in K$ admits foliations $\cF^1_x$,$\cF^2_x$,$\cF^3_x$,$\cF^{12}_x$ and $\cF^{23}_x$ 
such that for every $y\in B_{r_0}(x)$ and $*\in\{1,2,3,12,23\}$:

\begin{itemize}

\item[(i)] $\cF_x^*(y)$ is $\C^1$ and tangent to the respective cone.

\item[(ii)] $f(\cF^*_x(y,r_0))\subset \cF^*_{f(x)}(f(y))$ and $f^{-1}(\cF^*_x(y,r_0))\subset \cF^*_{f^{-1}(x)}(f^{-1}(y))$.

\item[(iii)] $\cF^1_x$ and $\cF^2_x$ subfoliate $\cF^{12}_x$ and $\cF^2_x$ and $\cF^3_x$ subfoliate $\cF^{23}_x$. 

\end{itemize}

\end{lemma}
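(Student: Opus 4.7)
The plan is to follow the Burns--Wilkinson construction of center/stable/unstable fake foliations and adapt it to the three-summand situation. First I would extend the bundles $E^1, E^2, E^3$ from $K$ to a small neighborhood $V$ of $K$ by continuity, allowing the extended sub-bundles only to lie in slightly wider invariant cones; the domination carries over on $V$ if $V$ is chosen small enough. Next I would linearize in an exponential chart: for each $x\in K$, pull $f$ back through $\exp_x$ and $\exp_{f(x)}$ to obtain a local diffeomorphism $\tilde f_x : T_xM \supset U_x \to T_{f(x)}M$, which agrees with $D f_x$ at $0$.

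Then, following the standard trick, I would modify $\tilde f_x$ outside a ball of radius $r_0 \ll \rho$ so that it coincides with the linear map $Df_x$ on $T_xM$ away from this ball, keeping the $C^1$ perturbation as small as desired by choosing $r_0$ small. Since $Df_x$ respects the splitting $E^1\oplus E^2\oplus E^3$ with domination, the modified global map $\hat f_x$ on $T_xM$ satisfies the hypotheses of the Hirsch--Pugh--Shub invariant-manifold theorem (or equivalently, the graph transform in each pair of complementary cones converges). This produces, for each $*\in\{1,2,3,12,23\}$, a $\hat f_x$-invariant $C^1$ foliation $\tilde \cF^*_x$ of $T_xM$ whose leaves are tangent to the corresponding $E^*$-cone. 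Pushing these down through $\exp_x$ gives the desired $\cF^*_x$ on $B_\rho(x)$. Items (i) and (ii) then follow from the construction: tangency to cones is automatic by the graph-transform fixed point, and the local invariance $f(\cF^*_x(y,r_0))\subset \cF^*_{f(x)}(f(y))$ is inherited from the global invariance under $\hat f_x$ combined with the fact that $\hat f_x$ agrees with $\tilde f_x$, hence with $f$, on the ball of radius $r_0$.

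For item (iii), the subfoliation property, I would build the foliations in the proper order and use uniqueness of the graph transform fixed point. Concretely, I first construct $\cF^{12}_x$ and $\cF^{23}_x$ (each using a two-summand splitting $(E^1\oplus E^2) \oplus E^3$ resp.\ $E^1\oplus(E^2\oplus E^3)$). Then I construct $\cF^2_x$ as the unique $\hat f_x$-invariant foliation tangent to the $E^2$-cone; by uniqueness it must lie inside both $\cF^{12}_x$ and $\cF^{23}_x$, because each individual leaf of $\cF^{12}_x$ is itself $\hat f_x$-invariant and admits (as a normally hyperbolic two-dimensional dynamics) a unique sub-foliation tangent to the $E^2$-cone. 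The same principle gives $\cF^1_x$ inside $\cF^{12}_x$ and $\cF^3_x$ inside $\cF^{23}_x$.

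The main obstacle is not any single step but the bookkeeping for the three-summand case: one has to verify that the cones can be chosen so that all five domination conditions $E^1 \prec E^2$, $E^2 \prec E^3$, $(E^1\oplus E^2)\prec E^3$, $E^1\prec(E^2\oplus E^3)$ hold simultaneously for the extended bundles on $V$, and that the compactly-supported modification of $\tilde f_x$ can be made small enough in $C^1$ so that all five graph transforms are simultaneous contractions. This is standard but delicate; once it is set up, the invariance, the $C^1$ regularity, and the subfoliation property all follow from the uniqueness of the graph-transform fixed point. Since the statement is directly cited from \cite{LVY} and \cite{BW}, I would refer to those sources for the full technical details rather than reproduce them.
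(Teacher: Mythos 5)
The paper does not give its own proof of this lemma: it is quoted verbatim from \cite{LVY} (Lemma~3.3), which in turn follows Burns--Wilkinson \cite{BW} (Proposition~3.1). Your sketch correctly reproduces the Burns--Wilkinson ``fake foliation'' construction that underlies both: extend the bundles to a neighborhood, conjugate $f$ to a map on $T_xM$ via exponential charts, cut it off outside a small ball so that the resulting global map on $T_xM$ is a small $C^1$ perturbation of $Df_x$, apply the graph-transform/HPS machinery to each of the five cone-pairs, and push the resulting invariant foliations down through $\exp_x$; local invariance (ii) and tangency (i) come for free, and the uniformity in $x$ comes from compactness of $K$.

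One place where your argument for item (iii) is slightly looser than what is actually done in \cite{BW} is the construction of $\cF^2_x$. You propose to build $\cF^2_x$ directly as ``the unique $\hat f_x$-invariant foliation tangent to the $E^2$-cone'' and then argue by uniqueness that it subfoliates both $\cF^{12}_x$ and $\cF^{23}_x$. But there is no global uniqueness statement for an invariant foliation tangent to a \emph{middle} cone: the graph transform is applied to a fixed complementary pair, and a foliation built leaf-by-leaf inside $\cF^{12}_x$ need not coincide a priori with one built leaf-by-leaf inside $\cF^{23}_x$. The clean way around this --- and the route taken in \cite{BW} --- is to first construct $\cF^{12}_x$ and $\cF^3_x$ from the two-summand splitting $(E^1\oplus E^2)\oplus E^3$, then $\cF^1_x$ and $\cF^{23}_x$ from $E^1\oplus(E^2\oplus E^3)$, and finally \emph{define} $\cF^2_x(y)$ as the connected component through $y$ of the transverse intersection $\cF^{12}_x(y)\cap\cF^{23}_x(y)$. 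With this definition the subfoliation property (iii) is immediate, the tangency to the $E^2$-cone follows from the tangency of the two intersected leaves, and local invariance is inherited from (ii) for $\cF^{12}$ and $\cF^{23}$. Aside from this bookkeeping point, your outline matches the cited proof.
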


\begin{remark}\label{r.flowfakefoliation}
In the following proof, we will consider two kinds of fake foliations:
\begin{itemize} 
\item The partially hyperbolic splitting $E^s\oplus E^c\oplus E^u$ on the singularities $\Sing(X)\cap \tilde{\Lambda}$
induces fake foliations $\hat{\cF}^i$ $i=s,c,u,sc,cu$, where $\hat{\cF}^s$ and $\hat{\cF}^u$ 
have contracting and expanding property. 

\item The dominated splitting $E^s\oplus F^{cu}$ on $\tilde{\Lambda}$ induces fake foliations 
$\tilde{\cF}^s$ and $\tilde{\cF}^{cu}$. 

\end{itemize}
We note that the proposition (ii) above may not hold for flows, i.e., the fake 
foliation does not preserved by $\phi_t$ when $t\notin \mathbb{Z}$. Moreover, the 
flow orbit is not even locally saturated by $\tilde{\cF}^{cu}$ leaf. In fact, the fake foliations depends on
extension of the dynamics in the tangent bundle (see~\cite{BW}[Proposition 3.1]), which is in general
not preserved by the flow.

\end{remark}

The following lemma provides an important observation on infinity Bowen ball, a proof can be found 
in the proof of \cite{LVY}[Theorem 3.1], which is based on the local product structure and 
hyperbolicity of $\tilde{\cF}^s$:

\begin{lemma}\label{l.bowenball}

For any $x\in\tilde{\Lambda}$, $B_\infty(x,r_0)\subset \tilde{\cF}_x^{cu}(x)$.

\end{lemma}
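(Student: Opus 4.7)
The plan is to take $y \in B_\infty(x, r_0)$ and show $y$ lies on the fake center-unstable leaf of $x$ by ruling out any non-trivial ``fake-stable coordinate'' of $y$ relative to $x$. The fake foliations $\tilde{\cF}^s_x$ and $\tilde{\cF}^{cu}_x$ have complementary dimensions and are transverse (remark~\ref{r.transverse}), so the local product structure inside $B_{r_0}(x)$ defines a single point
$$y^s \;=\; \tilde{\cF}^s_x(x)\,\cap\,\tilde{\cF}^{cu}_x(y).$$
Then $y \in \tilde{\cF}^{cu}_x(x)$ is equivalent to $y^s = x$, so the goal is to show $y^s = x$.

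Assume for contradiction that $\delta := d(y^s,x) > 0$. By local invariance (property (ii) of Lemma~\ref{l.fakefoliation}) applied iteratively, as long as each iterate stays inside the $r_0$-neighborhood of the corresponding orbit point,
$$f^{-n}(y^s) \;\in\; \tilde{\cF}^s_{f^{-n}(x)}(f^{-n}(x))\,\cap\, \tilde{\cF}^{cu}_{f^{-n}(x)}(f^{-n}(y)).$$
The first membership combined with the uniform contraction of $E^s$ under $Df$ (which passes to uniform expansion of $\tilde{\cF}^s$ leaves under $f^{-1}$, by construction of the fake foliations) gives $d_s(f^{-n}(y^s), f^{-n}(x)) \geq \mu^n \delta$ for some $\mu > 1$. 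On the other hand, the $B_\infty$-hypothesis yields $d(f^{-n}(y), f^{-n}(x)) \leq r_0$, while the dominated splitting $E^s \oplus F^{cu}$ implies that the backward expansion along $\tilde{\cF}^{cu}$ is strictly weaker than that along $\tilde{\cF}^s$, so $d_{cu}(f^{-n}(y^s), f^{-n}(y)) \leq C\nu^n$ for some $\nu < \mu$. Feeding these bounds into the triangle inequality yields $\mu^n\delta \leq r_0 + C\nu^n$, which fails for large $n$.

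The main obstacle I anticipate is the bookkeeping needed to ensure that the iterates $f^{-n}(y^s)$ remain inside the $r_0$-tube around the orbit of $x$ long enough to reach the contradiction. The local invariance in Lemma~\ref{l.fakefoliation}~(ii) only holds in that neighborhood, so one has to chain the backward iterations together carefully, controlling the cu-spread $d_{cu}(f^{-n}(y^s), f^{-n}(y))$ against the faster backward s-expansion. The domination of $E^s$ by $F^{cu}$ (equivalently, the backward dominance of $E^s$) is precisely the mechanism that closes this bootstrap and forces $y^s = x$.
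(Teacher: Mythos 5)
Your overall strategy is the right one and matches the proof cited from~\cite{LVY}[Theorem~3.1]: project $y$ to the $s$-coordinate $y^s=\tilde{\cF}^s_x(x)\cap\tilde{\cF}^{cu}_x(y)$, use local invariance to keep track of $f^{-n}(y^s)$, and use the backward expansion of the fake $s$-leaves to force $y^s=x$. The gap is in the step you flag as the ``main obstacle,'' and your proposed resolution there is not correct as stated. You claim that domination yields $d_{cu}(f^{-n}(y^s),f^{-n}(y))\leq C\nu^n$ for some $\nu<\mu$. Domination only gives a \emph{pointwise} comparison: along the orbit, $\|Df^{-L}|_{F^{cu}(z)}\|\leq\tfrac12\,m(Df^{-L}|_{E^s(z)})$. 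This does not produce a uniform exponential rate $\nu<\mu$ for $\|Df^{-n}|_{F^{cu}}\|$, since $m(Df^{-n}|_{E^s})$ has no uniform upper bound (uniform contraction of $E^s$ gives a lower bound $\geq\mu^n$, not an upper one). So the quantity $d_{cu}(f^{-n}(y^s),f^{-n}(y))$ may well grow faster than $\mu^n$ at some base points, and the inequality $\mu^n\delta\leq r_0+C\nu^n$ does not follow.

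The clean way to close the bootstrap is to notice that $f^{-n}(y^s)$ \emph{is} the $s$-coordinate of $f^{-n}(y)$ in the local chart around $f^{-n}(x)$: by local invariance (Lemma~\ref{l.fakefoliation}(ii)) applied to both foliations, $f^{-n}(y^s)=\tilde{\cF}^s_{f^{-n}(x)}(f^{-n}(x))\cap\tilde{\cF}^{cu}_{f^{-n}(x)}(f^{-n}(y))$, as long as everything remains in the $r_0$-chart. Since the fake leaves are uniformly $C^1$ and uniformly transverse (Remark~\ref{r.transverse}, Lemma~\ref{l.fakefoliation}(i)), the local product map is uniformly Lipschitz: $d(f^{-n}(y^s),f^{-n}(x))\leq C\,d(f^{-n}(y),f^{-n}(x))\leq Cr_0$. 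Shrinking the starting radius by a factor $C$ makes this self-consistent for all $n$, so the induction closes without any rate comparison on the $cu$-side. Then the uniform backward expansion of curves tangent to the $E^s$-cone gives $d(f^{-n}(y^s),f^{-n}(x))\geq c\mu^n\,d(y^s,x)$, which is incompatible with the uniform bound $Cr_0$ unless $y^s=x$. In short: replace the $\nu<\mu$ rate argument by the Lipschitz control of the local product structure; that is the ``local product structure and hyperbolicity of $\tilde{\cF}^s$'' the paper refers to.
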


As an immediately corollary of Lemma~\ref{l.bowenball}, we obtain a kind of weak saturated 
property for $cu$ fake leaf $\tilde{\cF}^{cu}(x)$:

\begin{corollary}\label{c.localcoherence}

There is $r_1>0$ such that for any $y\in B_\infty(x,r_0/2)$, $y_t\in \tilde{\cF}_x^{cu}(x)$ for 
$|t| \leq r_1$.

\end{corollary}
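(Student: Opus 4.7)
The plan is to reduce the corollary to Lemma~\ref{l.bowenball} by showing that, for small $t$, the image $\phi_t(y)$ still lies inside the infinite Bowen ball $B_\infty(x,r_0)$, which the lemma already identifies as a subset of $\tilde{\cF}^{cu}_x(x)$. A more direct route—arguing that the fake leaf $\tilde{\cF}^{cu}_x(x)$ is literally invariant under short segments of the flow—is not available: Remark~\ref{r.flowfakefoliation} emphasizes that the fake foliations come from a tangent-bundle extension and are not preserved by $\phi_t$ for $t\notin\ZZ$, so there is no local flow-saturation of $\tilde{\cF}^{cu}_x$ to exploit. The point is therefore to push the problem back to the time-one dynamics, where Lemma~\ref{l.bowenball} applies.

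The mechanism is the commutation $\phi_t\circ f = f\circ\phi_t$, which gives $f^n(\phi_t(y)) = \phi_t(f^n(y))$ for every $n\in\ZZ$. Combined with the uniform estimate
\[
d(\phi_t(w),w)\leq C|t|\qquad\text{for all }w\in M,\ |t|\leq 1,
\]
where $C=\sup_{w\in M}\|X(w)\|<\infty$ (finite by compactness of $M$), and the triangle inequality applied to the chain $f^n(x)\to f^n(y)\to \phi_t(f^n(y))=f^n(\phi_t(y))$, one obtains
\[
d(f^n(\phi_t(y)),f^n(x))\ \leq\ d(f^n(y),f^n(x))+C|t|\ \leq\ \tfrac{r_0}{2}+C|t|.
\]
Setting $r_1=r_0/(2C)$, for $|t|\leq r_1$ the right-hand side is bounded by $r_0$, uniformly in $n\in\ZZ$, in $y\in B_\infty(x,r_0/2)$, and in $x\in\tilde{\Lambda}$. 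Hence $\phi_t(y)\in B_\infty(x,r_0)$, and Lemma~\ref{l.bowenball} yields $\phi_t(y)\in\tilde{\cF}^{cu}_x(x)$.

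I do not foresee any serious obstacle: once one realizes that Lemma~\ref{l.bowenball} already contains the geometric content and that commutativity transfers the Bowen-ball condition from $y$ to $\phi_t(y)$, the rest is a routine continuity estimate. The only point requiring minimal care is uniformity of $r_1$ across $x$ and $y$, which follows automatically from the global bound $\|X\|\leq C$ on the compact manifold $M$.
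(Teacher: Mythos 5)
Your proof is correct and follows essentially the same route as the paper: use commutation $f^n\circ\phi_t=\phi_t\circ f^n$, bound $d(f^n(y),f^n(y_t))$ by the length of the flow segment via $C=\sup\|X\|$ (the paper calls this $D_0$), set $r_1=r_0/(2C)$, apply the triangle inequality to land inside $B_\infty(x,r_0)$, and invoke Lemma~\ref{l.bowenball}. Your preliminary remark about why one cannot directly use flow-saturation of the fake leaf (Remark~\ref{r.flowfakefoliation}) is a useful clarification but doesn't change the argument.
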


\begin{proof}
Denote $D_0=\max_{x\in M}{\|X(x)\|}$ and $r_1=\frac{r_0}{2D_0}$. Then for any $|t| \leq r_1$, 
the segment of flow orbit between $f^n(y_t)$ and $f^n(y)$ has length bounded by $r_1 D_0\leq r_0/2$. 
Hence, 
$$d(f^n(x),f^n(y_t))\leq d(f^n(x),f^n(y))+d(f^n(y),f^n(y_t))\leq r_0.$$
Which implies that $y_t\in B_\infty(x,r_0)\subset \hat{\cF}^{cu}(x)$ for $|t|\leq r_1$. 

\end{proof}

The discussion of the infinity Bowen ball of a singularity is the same as that in \cite{LVY}[Theorem 3.1]:

\begin{lemma}\label{l.singularitiesexpansive}

There is $L>0$ such that for any singularity $\sigma\in \tilde{\Lambda}$, $B_\infty(\sigma,r_0/2)$ is a single point 
or a 1-dimensional center segment with length bounded by $L$. In the latter case, the center 
segment consists singularities and saddle connections.

\end{lemma}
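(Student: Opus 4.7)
The plan is to exploit a second family of fake foliations built directly at the singularity, complementing the $\tilde\cF^{cu}$ foliation already used in Lemma~\ref{l.bowenball}. Applying Lemma~\ref{l.fakefoliation} to the compact invariant set $\Sing(X)\cap\tilde\Lambda$ with the partially hyperbolic splitting $E^s\oplus E^c\oplus E^u$ furnished by Lemma~\ref{l.singularities} produces, in a neighborhood $B_{r_0}(\sigma)$, locally $f$-invariant foliations $\hat\cF^s_\sigma,\hat\cF^c_\sigma,\hat\cF^u_\sigma,\hat\cF^{sc}_\sigma,\hat\cF^{cu}_\sigma$ with $\hat\cF^u_\sigma$ uniformly $f$-expanded, $\hat\cF^s_\sigma$ uniformly $f$-contracted, and $\hat\cF^c_\sigma=\hat\cF^{sc}_\sigma\cap\hat\cF^{cu}_\sigma$ by property (iii). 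Shrinking $r_0$, we also assume these foliations enjoy uniform local product structure on $B_{r_0}(\sigma)$.

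The key step is to show
$$B_\infty(\sigma,r_0/2)\subset\hat\cF^{sc}_\sigma(\sigma)\cap\hat\cF^{cu}_\sigma(\sigma)=\hat\cF^c_\sigma(\sigma).$$
For the first inclusion, given $y\in B_\infty(\sigma,r_0/2)$, let $w(y)\in\hat\cF^{sc}_\sigma(\sigma)$ be the $\hat\cF^u_\sigma$-holonomy projection of $y$ onto $\hat\cF^{sc}_\sigma(\sigma)$. Since $\sigma$ is $f$-fixed, Lemma~\ref{l.fakefoliation}(ii) gives $f(\hat\cF^{sc}_\sigma(\sigma))\subset\hat\cF^{sc}_\sigma(\sigma)$ and $f(\hat\cF^u_\sigma(w))\subset\hat\cF^u_\sigma(f(w))$ locally, so $w(f^n(y))=f^n(w(y))$ inductively for all $n\geq 0$ for which iterates remain in $B_{r_0}(\sigma)$; uniform expansion of $\hat\cF^u_\sigma$ then forces the intra-leaf distance $d_{\hat\cF^u}(f^n(y),f^n(w(y)))$ to grow geometrically unless $y=w(y)$, and the confinement $f^n(y)\in B_{r_0/2}(\sigma)$ rules this out, giving $y\in\hat\cF^{sc}_\sigma(\sigma)$. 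The symmetric argument applied to $f^{-1}$, using the $f^{-1}$-expansion of $\hat\cF^s_\sigma$ and the fact that $B_\infty$ is also backward invariant, yields $y\in\hat\cF^{cu}_\sigma(\sigma)$. Hence $y\in\hat\cF^c_\sigma(\sigma)$, which is a single $C^1$ arc through $\sigma$ tangent to $E^c(\sigma)$; its length is bounded by the uniform fake-leaf size $r_0$, which furnishes the constant $L$.

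To finish, I would identify the dynamics on this arc. For any $y$ in it, $f^n(y)\in B_{r_0/2}(\sigma)$ for every $n\in\ZZ$, and by uniform continuity of $\phi_t$ on $[0,1]$ the entire real orbit of $y$ stays in a fixed small neighborhood $V$ of $\sigma$. If $y$ is regular, then $\omega(y)$ and $\alpha(y)$ are nonempty compact invariant subsets of $V\cap\tilde\Lambda$; any regular point $z$ in either limit set would itself have full orbit confined to $V$ and therefore, by the same two-step argument applied at a singularity $\sigma'\in V$ to which $z$ is close, lie on $\hat\cF^c_{\sigma'}(\sigma')$, which is incompatible with nontrivial recurrence of a regular orbit together with the sectional-expanding property of $F^{cu}$ on any surface containing $E^c\oplus E^u$. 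Hence $\omega(y),\alpha(y)\subset\Sing(X)$ and $y$ lies on a saddle connection. The principal obstacle is the first step: non-hyperbolicity of $\sigma$ blocks any appeal to classical local stable/unstable manifolds or linearization; the substitute is precisely the fake foliation $\hat\cF_\sigma$, available because sectional hyperbolicity forces $f|_{\Sing(X)\cap\tilde\Lambda}$ to be partially hyperbolic with a one-dimensional neutral center direction.
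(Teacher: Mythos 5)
Your proposal follows the same approach the paper uses --- the paper defers the proof of this lemma to \cite{LVY}[Theorem~3.1], and your argument reconstructs exactly that mechanism: build the fake foliations $\hat\cF^*_\sigma$ from the partially hyperbolic splitting $E^s\oplus E^c\oplus E^u$ at the singularities (Lemmas~\ref{l.singularities} and~\ref{l.fakefoliation}), use the local product structure and $f$-invariance of the leaves, and then use the uniform expansion of $\hat\cF^u_\sigma$ forward and of $\hat\cF^s_\sigma$ backward to trap $B_\infty(\sigma,r_0/2)$ inside $\hat\cF^{sc}_\sigma(\sigma)\cap\hat\cF^{cu}_\sigma(\sigma)=\hat\cF^c_\sigma(\sigma)$. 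The bound $L$ by the uniform fake-leaf size is also exactly right.

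One remark on the final step, which is the only part stated loosely. Your appeal to sectional expansion to rule out ``nontrivial recurrence'' of a regular $z$ would indeed exclude a periodic orbit in the segment, but it does not by itself show that $\omega(y)$ and $\alpha(y)$ reduce to singularities. A cleaner route to the ``singularities and saddle connections'' conclusion, which is also the one implicit in the paper's argument (cf.\ equation~\eqref{e.bounded} in the proof of Lemma~\ref{l.singularities}), is to note that for a regular $y\in B_\infty(\sigma,r_0/2)$ the flow vector $X(y)$ must lie in $E^c$: since $X\subset F^{cu}=E^c\oplus E^u$ near $\sigma$, any nonzero $E^u$-component of $X(y)$ would force $\|X(f^n(y))\|$ to grow exponentially, contradicting its boundedness on the compact set $\tilde\Lambda$. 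Hence the flow is everywhere tangent to the one-dimensional arc $\hat\cF^c_\sigma(\sigma)$, so the restricted flow is a monotone one-dimensional flow whose $\omega$- and $\alpha$-limits are rest points --- singularities --- giving a saddle connection. This replaces your ``incompatible with nontrivial recurrence'' heuristic with the actual mechanism; with that substitution your proof is complete and matches the paper's.
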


\begin{remark}\label{r.singularitiesstructure}

By Lemma~\ref{l.singularitiesexpansive}, $\Sing(X)\cap \tilde{\Lambda}\subset \{\tilde{\sigma}_1,\dots,\tilde{\sigma}_m\}
\cup I_1\cup \dots\cup I_t$ where $\{\tilde{\sigma}_1,\dots, \tilde{\sigma}_m\}$ are finite isolated singularities, 
and $\{I_1,\dots, I_t\}$ are center segments tangent to the center direction. Moreover, each center 
segment $I_i$ is fixed by $f$ and contained in $B_\infty(\sigma_i,r_0/2)$ for some singularity $\sigma_i\in I_i$. We assume that different center segments $I_i\neq I_j$ only intersect on the boundary points, which are singularities.
By a double covering, we may always suppose $E^c$ is orientable and call the boundary points of each center segment $I_i$ 
by the {\it left} extreme point $\sigma_i^-$ and the {\it right} extreme point $\sigma_i^+$.

\end{remark}

Now we give more description for the dynamics close to the center segments. Consider the fake leaf 
$\hat{\cF}_{\sigma_i}^{cu}(\sigma_i)$ which contains $I_i$. Define:
$$W^{uu}_{\delta}(I_i)=\cup_{x\in I_i}\hat{\cF}^u_{\sigma_i}(x,\delta)\subset\hat{\cF}_{\sigma_i}^{cu}(\sigma_i)\;\;
\text{and}\;\; \cN_i(\delta)=\cup_{y\in W^{uu}_\delta(I_i)} \hat{\cF}^s_{\sigma_i}(y,\delta).$$

\begin{lemma}\label{l.awayfromIi}

There are $\delta, r_2>0$ and center segments $I_{i,j}\subset I_i$ for $1\leq j \leq k_i$ such that 
$\length(I_{i,j})<r_2$, $\Sing(X)\cap I_i=\Sing(X)\cap(\cup_j I_{i,j})$ and for any non-trivial ergodic measure 
$\mu$ of $\phi_t$, one has $\mu(\cN_{i,j}(\delta))=0$ for each $I_{i,j}$.

\end{lemma}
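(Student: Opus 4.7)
The plan is to choose each $I_{i,j}$ so that both endpoints lie strictly inside saddle connections of $I_i$ (where $X\neq 0$ and tangent to $E^c$), which makes the ``end caps'' of the tube $\cN_{i,j}(\delta)$ transverse to the flow; the $\mu$-nullity then follows from a Poincar\'e-section/flow-box argument combined with the sectional-expansion of $F^{cu}$.

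First I would invoke Lemma~\ref{l.singularitiesexpansive} and Remark~\ref{r.singularitiesstructure}: $I_i$ is a finite concatenation of singularities and saddle connections, each saddle connection an open arc of length at most $L$. Fix $r_2>0$ small and cover $\Sing(X)\cap I_i$ by closed sub-intervals $I_{i,j}\subset I_i$ of length $<r_2$ whose two endpoints $p_{i,j}^{\pm}$ land in the open interior of saddle connections. Since saddle connections are orbits of $X$ tangent to $E^c$, one has $\|X(p_{i,j}^{\pm})\|\geq c_0>0$ and $X(p_{i,j}^{\pm})\in E^c$. For $\delta$ small compared to $c_0$ (and to the domain of the fake foliations $\hat{\cF}^{*}_{\sigma_i}$), the caps $\hat{\cF}^{s}_{\sigma_i}(p_{i,j}^\pm,\delta)\times\hat{\cF}^{u}_{\sigma_i}(p_{i,j}^\pm,\delta)$ are tangent to $E^s\oplus E^u$ and thus transverse to $X$; they serve as local Poincar\'e sections bounding $\cN_{i,j}(\delta)$ in the center direction.

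The heart of the argument is an exit-time analysis. The $\hat{\cF}^s_{\sigma_i}$-fibers are topologically contracting (as in the proof of Lemma~\ref{l.bowenball}), so orbits cannot exit $\cN_{i,j}(\delta)$ across the stable side of the tube; the sectional expansion of $F^{cu}$ together with the expansion of $\hat{\cF}^u_{\sigma_i}$-fibers forces every forward orbit that stays in $\cN_{i,j}(\delta)$ for all time into the forward trapped set, which coincides with $W^s(\Sing(X)\cap I_{i,j})$. Every other orbit leaves the tube in finite time through an end cap or a fake-unstable face. Suppose for contradiction $\mu(\cN_{i,j}(\delta))>0$ for some non-trivial ergodic $\mu$. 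By $\mu(\Sing)=0$, ergodicity and $\mu$-recurrence, one shows $\mu(W^s(\Sing(X)\cap I_{i,j}))=0$, so the passage time $\tau$ across the tube is finite $\mu$-almost everywhere. Decomposing $\mu|_{\cN_{i,j}(\delta)}$ as a suspension of the transverse measure $\mu_\Sigma$ on the union $\Sigma$ of entry faces yields
\[
\mu(\cN_{i,j}(\delta))=\int_{\Sigma}\tau(y)\,d\mu_\Sigma(y).
\]
The exponential $F^{cu}$-area expansion during each passage, coupled with the compactness of $\tilde{\Lambda}$ (which forces tangent $2$-disk areas to be uniformly bounded), forces $\mu_\Sigma$ to be concentrated on $\{\tau=\infty\}\subset\Sigma\cap W^s(\Sing\cap I_{i,j})$; since this set is $\mu_\Sigma$-null we get $\mu_\Sigma=0$ and therefore $\mu(\cN_{i,j}(\delta))=0$, a contradiction.

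The main obstacle is the non-hyperbolicity of the singularities in $I_{i,j}$: the center eigenvalue has norm equal to $1$ (by Lemma~\ref{l.singularities} together with the obstruction to hyperbolicity coming from the existence of saddle-connections inside $I_i$), so no classical linearization of the flow near $\sigma$ is available and the passage time $\tau$ is unbounded as $y\to\Sigma\cap W^s(\Sing)$. Making the contradiction rigorous therefore requires using the local-product structure of the fake foliations $\hat{\cF}^s_{\sigma_i},\hat{\cF}^u_{\sigma_i}$ (Lemma~\ref{l.fakefoliation}) as a substitute for linearization, and a careful balancing of the Fubini integral $\int\tau\,d\mu_\Sigma$ against the exponential sectional expansion along orbits that dwell near $\sigma$; this is precisely the step where the global sectional-hyperbolic hypothesis on $\tilde{\Lambda}$ is used rather than merely the partial-hyperbolic structure at $\Sing(X)\cap\tilde{\Lambda}$.
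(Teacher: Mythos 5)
There is a genuine gap, and your route also diverges from the paper's. The paper chooses the $I_{i,j}$ with \emph{singularity} endpoints (not endpoints in the interior of saddle connections), applies normal hyperbolicity of $I_{i,j}$ (\cite{HPS77}) to show that any $x^n\in\cN_{i,j}(\delta_n)\setminus\bigl(W^s_{\loc}(I_{i,j})\cup W^u_{\loc}(I_{i,j})\bigr)$ must have entered $\cN_{i,j}(r_0)$ in backward time through the ``stable boundary'' (points whose $\hat{\cF}^s$-distance to $I_{i,j}$ is between $r_0/2$ and $r_0$), and then exploits the key geometric observation that at such boundary points the flow direction $X$ lies in the $E^s$ cone. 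Once $X(f^{t_n}(x^n))$ is tangent to the $E^s$ cone, a short flow segment $l_\delta$ through that point grows unboundedly under backward iteration of $f$, while the length of any image of a flow-orbit segment of bounded length is uniformly bounded (equation~\eqref{e.bounded} in the proof of Lemma~\ref{l.singularities}); that contradiction finishes the argument. None of these ingredients appear in your proposal.

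Your proposal instead sets up a Kac-type identity $\mu(\cN_{i,j}(\delta))=\int_\Sigma\tau\,d\mu_\Sigma$ and claims that ``exponential $F^{cu}$-area expansion... forces $\mu_\Sigma$ to be concentrated on $\{\tau=\infty\}$.'' That inference is not justified: sectional expansion is a statement about areas of tangent $2$-disks in $F^{cu}$, and there is no argument linking it to a bound on $\int\tau\,d\mu_\Sigma$ or to the structure of $\mu_\Sigma$. You yourself flag this (``making the contradiction rigorous therefore requires... a careful balancing of the Fubini integral against the exponential sectional expansion''), so the central step of the proof is missing rather than proved. The paper's mechanism---the flow direction being trapped in the backward-expanding $E^s$ cone at the tube's stable boundary, contradicting the uniform bound on lengths of iterated flow segments---is a much lighter tool and does not need $F^{cu}$ area expansion at all; to repair your argument you would need either to adopt that observation, or to actually carry out the quantitative Fubini/area-expansion estimate you only gesture at, and it is far from clear the latter can be made to work.
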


\begin{proof}

There is $\vep>0$ such that for any $z\in \cF^s_{r_0}(\sigma)\setminus \cF^s_{r_0/2}(\sigma)$
where $\sigma$ belongs to $\Sing(X)\cap\tilde{\Lambda}$, and for any $z^{'}\in B_\vep (z)$, $X(z^{'})$ is tangent to the 
$E^s$ cone.

Take $r_2>0$ such that for any $x,y\in I_i$ with $d(x,y)<r_2$ and for any $z\in \hat{\cF^s}_\sigma
(x,r_0)\setminus \hat{\cF}^s_\sigma(x,r_0/2)$, there is $z^{'}\in \hat{\cF^s}_\sigma(y,r_0)\setminus 
\hat{\cF}^s_\sigma(y,r_0/2)$ satisfying $d(z,z^{'})<\vep$.

Recall that $I_i\subset B_\infty(\sigma_i,r_0/2)$ is invariant and $f|I_i$ is partially hyperbolic. 
By the stable manifold theorem 
(\cite{HPS77}) for normally hyperbolic submanifold $I_i$, for any $x\in I_i$, $\hat{\cF}_\sigma^s(x)$ is 
the local strong stable manifold of $x$. Similar property holds for the local strong unstable manifold. Moreover,
$$W^s_{loc}(I_i)=\cup_{x\in I_i}\hat{\cF}_\sigma^s(x,r_0) \;\;\; \text{and}\;\;\; W^u_{loc}(I_i)=\cup_{x\in I_i}\hat{\cF}_\sigma^u(x,r_0).$$

Choose finite sub-segments $(I_{i,j})_{j=1}^{k_i}$ such that 
\begin{itemize}

\item $\length(I_{i,j})<r_2$;

\item the boundary points of each $I_{i,j}$ are singularities, and different subsegments only intersect on the boundary points;

\item $\Sing(X)\cap I_i=\Sing(X)\cap(\cup_j I_{i,j})$.

\end{itemize}

Suppose there are $\delta_n\rightarrow 0$ such that $\mu(\cN_{i,j}(\delta_n))>0$. 
Since $\mu$ is a non-trivial ergodic measure, $\mu(W^s_{loc}(I_{i,j}))=
\mu(W^u_{loc}(I_{i,j}))=0$. There exists
$$x^n\in \cN_{i,j}(\delta_n)\setminus (W^s_{loc}(I_{i,j})\cup W^u_{loc}(I_{i,j})).$$ 
By~\cite{HPS77}, the negative iteration of $x^n$ must leave $\cN_{i,j}(r_0)$. 
Take $t_n<0$ the last time such that $f^{m}(x^n)\in \cN_{i,j}(r_0)$ for any $t_n\leq m \leq 0$. Then 
$t_n\rightarrow -\infty$ and we may suppose 
$$f^{-t_n}(x^n)\rightarrow x^*\in W^s_{loc}(I_{i,j})\setminus I_{i,j}.$$
As discussed before, for $n$ is sufficiently large, 
$X(f^{t_n}(x^n))$ is tangent to $E^s$ cone. Denote by $l_\delta=(\phi_{t}(f^{t_n}(x_n)))_{|t|\leq \delta}$. Then
for $\delta$ sufficiently small, $l_\delta$ is tangent to the $E^s$ cone. It follows easily that
$\lim_{m\to \infty}\length f^{t_n-m}(l_\delta)\to \infty$, which contradicts the fact that the 
length of any image of a flow orbit with bounded length is always bounded, as explained in the last 
paragraph of the proof of Lemma~\ref{l.singularities}.
\end{proof}

From now on, we treat every isolated singularity as a trivial center segment. Re-order the center segments
$\{I_{i,j}\}$ by $\{I_i\}_{1\leq i \leq t}$ and choose a singularity $\sigma_i$ inside each $I_i$. From the construction,
$\length(I_i)<r_2$ for every $I_i$. Lemma~\ref{l.awayfromIi} shows that, for any non-trivial 
ergodic measure supported in $U$, its generic point can only approach the extreme singularities of every center segment
from their {\it half neighborhoods} which is defined as following. Let $\sigma_i^+$ be a
right extreme point of $I_i$, $\cup_{y\in \hat{\cF}^u_{\sigma_i}(\sigma_i^+,r_0)}\hat{\cF}^s_{\sigma_i}(y,r_0)$ 
is a topological codimension-one disk, which separates $B_{r_0}(\sigma^+_i)$ into two connect components, the 
{\it right neighborhood of $\sigma_i^+$}, denoted by $B^r_{r_0}(\sigma_i^+)$, is just the the right component. 
In a similar way, we may define the {\it left neighborhood} $B^l_{r_0}(\sigma_i^-)$ for 
the left extreme point $\sigma_i^-$.

In the following proof, we only consider the right extreme points, similar results always hold for the left 
ones.

\begin{lemma}\label{l.extremepoints}

There is $\delta>0$ such that for every non-trivial 
ergodic measure $\mu$ of $\phi_t$ and for every center segment $I_i$, 
if $\mu(B^r_{\delta}(\sigma^+_i))>0$, then the right branch of the center leaf 
$\hat{\cF}^c_{\sigma_i}(\sigma^+_i,r_2)$, denoted by $\hat{\cF}^{c,+}_{\sigma_i}(\sigma^+_i,r_2)$, is topologically contracting.

\end{lemma}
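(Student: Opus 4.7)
The plan is to argue by contradiction, adapting the backward-orbit length blow-up strategy of Lemma~\ref{l.awayfromIi}. Suppose the right branch $\hat{\cF}^{c,+}_{\sigma_i}(\sigma_i^+,r_2)$ is not topologically contracting, yet some non-trivial ergodic $\mu$ has $\mu(B^r_\delta(\sigma_i^+))>0$ for the $\delta$ we are trying to choose. Since $\sigma_i^+$ is the right extreme of $I_i$, the right branch contains no singularity other than $\sigma_i^+$. Shrinking $r_2$ if necessary, we may assume $\sigma_i^+$ is the unique fixed point of $f$ on this $1$-dimensional fake leaf, so an elementary intermediate-value argument forces $f$ to be topologically expanding on the right branch: forward iteration pushes $c^+$ strictly outward from $\sigma_i^+$, equivalently backward iteration pulls $c^+$ topologically into $\sigma_i^+$.

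Using the fake foliation chart from Lemma~\ref{l.fakefoliation} associated to the partially hyperbolic splitting $E^s\oplus E^c\oplus E^u$ at $\sigma_i^+$, one writes points of $B^r_\delta(\sigma_i^+)$ in coordinates $(s,c,u)$. Under $f^{-1}$ the coordinate $|s|$ expands at the stable rate, $u$ contracts at the unstable rate, and $c^+$ contracts topologically. For a $\mu$-generic $x\in B^r_\delta(\sigma_i^+)$ that avoids the local stable and unstable manifolds of $\sigma_i^+$, the backward orbit must therefore leave $B_\delta(\sigma_i^+)$ through the $s$-face of its boundary at some first exit time $N(x)$. The last interior iterate $y(x):=f^{-N(x)+1}(x)$ satisfies $|s(y(x))|\approx\delta$, while $u(y(x))$ and $c^+(y(x))$ are controlled by the contractions applied during $N(x)-1$ backward iterates.

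Combining Poincar\'e recurrence with the positive-mass hypothesis and Lemma~\ref{l.awayfromIi} applied to $I_{i,j}$ (which forbids $\mu$-mass from concentrating on any tube around $I_{i,j}$ in a manner that would keep the $s$-coordinate uniformly bounded away from zero), we extract a sequence $x_n$ of $\mu$-generic points with $N(x_n)\to\infty$. Then $(u(y(x_n)),c^+(y(x_n)))\to 0$ and, passing to a subsequence, $y(x_n)\to y^*\in\hat{\cF}^s_{\sigma_i}(\sigma_i^+,\delta)\setminus\{\sigma_i^+\}$, a regular point on the local strong stable manifold of $\sigma_i^+$.

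At $y^*$ the forward flow converges to $\sigma_i^+$, so $X(y^*)$ is tangent to the $E^s$ cone, and for $\delta$ small the flow segment $l_\delta:=(\phi_t(y^*))_{|t|\leq\delta}$ remains tangent to the $E^s$ cone along its length. Backward iteration by $f$ then expands the tangent vectors of $l_\delta$ exponentially in $E^s$, forcing $\length(f^{-m}(l_\delta))$ to blow up as $m\to\infty$. But this violates the uniform flow-length bound
$$\length(f^{-m}(l_\delta))\leq\frac{\max_{\Lambda}\|X\|}{\min_{l_\delta}\|X\|}\length(l_\delta)$$
established in the last paragraph of the proof of Lemma~\ref{l.singularities}, closing the argument. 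The main obstacle will be the extraction in paragraph three: one must combine the non-triviality of $\mu$ with the local product structure of hyperbolic invariant measures to rule out all $\mu$-mass lying at $s$-distance bounded away from zero from the local center-unstable leaf through $\sigma_i^+$, so as to guarantee that $N(x_n)\to\infty$ can be realized.
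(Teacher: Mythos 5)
Your proposal follows essentially the same contradiction strategy as the paper: assume the right center branch is topologically expanding, track backward orbits of $\mu$-generic points in the right half-neighborhood until they exit through the $s$-direction, pass to a limit point $y^*$ on the local strong stable manifold of $\sigma_i^+$, and then invoke the flow-segment length blow-up argument already used at the end of Lemma~\ref{l.singularities}. The final conclusion is identical to the paper's.

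However, there is a real gap at the extraction step — one you yourself flag in the last sentence — and your proposed route to close it does not obviously work. You need $\mu$-generic points $x_n$ whose backward exit time $N(x_n)$ tends to infinity, equivalently whose $s$-coordinate tends to zero, so that the exit points converge to a regular point of $\hat{\cF}^s_{\sigma_i}(\sigma_i^+)$. Poincar\'e recurrence and Lemma~\ref{l.awayfromIi} do not deliver this: recurrence only says generic orbits re-enter $B^r_\delta(\sigma_i^+)$ infinitely often, and Lemma~\ref{l.awayfromIi} controls mass near the center segments, neither of which rules out that the generic part of $\mu$ in $B^r_\delta$ sits at an $s$-distance bounded away from zero from the local $cu$-leaf of $\sigma_i^+$, in which case $N(x_n)$ stays bounded and no limit on $\cF^s$ is produced. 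The paper avoids this entirely by running the contradiction along a shrinking sequence $\delta'_n\to 0$ with $\mu(B^r_{\delta'_n}(\sigma_i^+))>0$: generic points $x^n\in B^r_{\delta'_n}(\sigma_i^+)$ then automatically have all coordinates, in particular $|s(x^n)|$, of size $O(\delta'_n)\to 0$, which is exactly what makes the exit time blow up and the exit points converge into $\cF^s(\sigma_i^+)\setminus\{\sigma_i^+\}$. A secondary, smaller gap: you assert that the right branch "contains no singularity other than $\sigma_i^+$" as an immediate consequence of $\sigma_i^+$ being a right extreme. This is not automatic — $\sigma_i^+$ could a priori be the left extreme of another center segment sitting to its right. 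The paper establishes this as a preliminary step, precisely by invoking Lemma~\ref{l.awayfromIi} together with the positive-measure hypothesis on $B^r_\delta(\sigma_i^+)$, so this observation requires an argument rather than a bare assertion.
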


\begin{proof}

Since there are only finitely many extreme points, it suffices to prove only for one right extreme point $\sigma_i^+$.

First observe that there are no singularities contained in $\hat{\cF}^{c,+}_{\sigma_i}
(\sigma^+_i,r_2)$. Suppose such a singularity does exist, there is a center segment connecting this singularity and
$\sigma_i^+$ from the right. Then the proof of Lemma~\ref{l.awayfromIi} implies that generic point of $\mu$ cannot 
approach $\sigma_i^+$ from the right, a contradiction.

Because $\hat{\cF}^{c,+}_{\sigma_i}(\sigma^+_i,r_2)$ is invariant and contains no singularities, it is topologically 
expanding or contracting. Assume it is topologically expanding, then $\hat{\cF}^{cu,+}_{\sigma_i}(\sigma_i^+,r_2)$ 
belongs to the unstable set of $\sigma_i^+$. We claim that there is $\delta^{'}>0$ such that 
$\mu(B^r_{\delta^{'}}(\sigma_i^+))=0$. Then we may take $\delta<\delta^{'}$ and conclude the proof.

It remains to prove this claim. Suppose there are positive numbers $\lim \delta_n^{'}\to 0$ such that $\mu(B^r_{\delta^{'}_n}(\sigma_i^+))>0$.
Because $\mu$ is non-trivial, we may assume there are $\mu$ generic points 
$$x^n\in B^r_{\delta^{'}_n}(\sigma_i^+)\setminus (W^u(\sigma^+_i)\cup \cF^s(\sigma_i^+)).$$
Consider $t_n<0$ the last time such that $f^{m}(x^n)\in B^r_{r_0}(\sigma^+_i)$ for any $t_n \leq m \leq 0$.
It is easy to see that $t_n\rightarrow \infty$. By the topological expansion of $\hat{\cF}^{cu,+}_{\sigma_i}(\sigma_i^+,r_2)$, 
we may suppose $f^{-t_n}(x^n)\rightarrow x^*\in \cF^s(\sigma^+_i)$.

The rest proof is the same as the last paragraph of the proof of Lemma~\ref{l.singularities}: 
for $n$ sufficiently large, $f^{t_n}(x^n)$ is close to $x^*$ and $X(f^{-t_n}(x^n))$ is tangent 
to $E^s$ cone. This is a contradiction, since the length of a small segment of the flow orbit which contains 
$f^{-t_n}(x^n)$ grows exponentially fast. The proof is complete.
\end{proof}

Lemmas~\ref{l.awayfromIi} and~\ref{l.extremepoints} explain to us that the generic point of any 
non-trivial ergodic measure can only approach finitely many half neighborhood of singularities which are
topologically `hyperbolic'.

Take $\delta_0<r_2$ satisfying Lemmas~\ref{l.awayfromIi} and~\ref{l.extremepoints}. From now on, 
we rename the half neighborhoods of the extreme points by $B_{\delta_0}^h(\sigma_1),\dots, B_{\delta_0}^h(\sigma_k)$. 
Note that, in this notation, when we are talking about the two half neighborhoods of the same isolated 
singularity, we give different name for this singularity.

\subsubsection{Expanding in a $cu$ fake leaf\label{sss.expandinginculeaf}}

In this subsubsection, we will build the expansion of the distance between different flow orbits 
contained in the same infinity Bowen ball. The discussion can be divided into two cases: 
the orbits are away from singularities (Lemma~\ref{l.localholonomy}), and are close 
to singularities (Lemma~\ref{l.nearsingularity}). We need the following notation:

\begin{definition}

For $x\in U\setminus \Sing(X)$, write $B_\delta^{\bot}(x)=\exp_x(\cN_x(\delta))$. 
And for $y\in B_\delta^{\bot}(x)\cap \tilde{\cF}^{cu}(x)$, denote by
$d_x^*(x,y)$ the distance between $x$ and $y$ in the submanifold
$B_\delta^{\bot}(x)\cap \tilde{\cF}^{cu}(x)$.

\end{definition}

\begin{lemma}\label{l.distancecompactable}

For any $\vep>0$ and $L>0$, there is $\delta>0$ such that for every $x\in \tilde{\Lambda} \setminus B_\vep(\Sing(X))$ 
and $y\in \tilde{\cF}_x^{cu}(x)\cap B^\bot_{\delta}(x)$, we have
$$d(x,y)<d^*_x(x,y)<Ld(x,y).$$

\end{lemma}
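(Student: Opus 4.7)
The plan is to reduce both inequalities to a standard graph/transversality argument in exponential coordinates at $x$, where all constants are uniform thanks to the lower bound $\|X(x)\|\ge c(\vep)>0$ available on $\tilde\Lambda\setminus B_\vep(\Sing(X))$. The lower bound $d(x,y)\le d^*_x(x,y)$ is immediate: any curve in the submanifold $B^\bot_\delta(x)\cap \tilde{\cF}^{cu}_x(x)$ joining $x$ and $y$ is in particular a curve in $M$, so its ambient length is at least $d(x,y)$.

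For the upper bound I would work in the exponential chart $\exp_x:T_xM\to M$. By Lemma~\ref{l.fakefoliation}, the fake leaf $\tilde{\cF}^{cu}_x(x)$ pulls back to the graph of a $C^1$ map $\varphi_x:F^{cu}(x)\to E^s(x)$ with $\varphi_x(0)=0$, $D\varphi_x(0)=0$, and $C^1$-norm uniformly bounded in $x\in\tilde\Lambda$. The normal section pulls back to the flat $\delta$-ball in the hyperplane $\cN_x=X(x)^\bot$. The intersection in the chart is thus the zero set, inside the graph of $\varphi_x$, of the linear functional $v\mapsto\langle v,X(x)\rangle/\|X(x)\|$.

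The next step is to establish uniform transversality between $F^{cu}(x)$ and $\cN_x$. Since $X(x)\in F^{cu}(x)$ (which follows from the $\Phi_t$-invariance of $F^{cu}$ and the domination $E^s\oplus F^{cu}$, exactly as in Lemma~\ref{l.bothdominatedsplitting}), one has $F^{cu}(x)+\cN_x=T_xM$, and $F^{cu}(x)\cap\cN_x$ has dimension $\dim F^{cu}-1$. The complement of $F^{cu}(x)\cap\cN_x$ inside $F^{cu}(x)$ is spanned by $X(x)$, and the angle between $X(x)$ and $\cN_x$ equals $\pi/2$; combined with the bound $\|X(x)\|\ge c(\vep)$ on $\tilde\Lambda\setminus B_\vep(\Sing(X))$ and the uniform angle between $E^s$ and $F^{cu}$ on the compact set $\tilde\Lambda$, this direct-sum decomposition is uniformly non-degenerate. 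An implicit function argument then writes the intersection in the exponential chart as the graph of a $C^1$ map $\psi_x:F^{cu}(x)\cap\cN_x\to\mathbb{R}X(x)\oplus E^s(x)$ with $\psi_x(0)=0$, $D\psi_x(0)=0$, and $C^1$-norm uniformly bounded in $x$. Standard graph estimates then yield $d^*_x(x,y)\le(1+\eta(\delta))d(x,y)$ with $\eta(\delta)\to 0$ as $\delta\to 0$, so it suffices to take $\delta$ with $1+\eta(\delta)<L$.

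The main obstacle I anticipate is precisely keeping these constants uniform as $x$ ranges over $\tilde\Lambda\setminus B_\vep(\Sing(X))$. The hypothesis $x\notin B_\vep(\Sing(X))$ is what rescues us: it forces $\|X(x)\|\ge c(\vep)>0$, hence a uniform transversality angle between $F^{cu}(x)$ and $\cN_x$, and then the implicit function theorem delivers the graph $\psi_x$ with $x$-independent $C^1$ bounds. Without this hypothesis, $X(x)\to 0$ would make $F^{cu}(x)$ asymptotically tangent to $\cN_x$, the projection would degenerate, and the intrinsic-versus-ambient ratio would blow up — which is the right behaviour, since near singularities the $cu$-leaf can wrap around with arbitrarily large intrinsic length relative to ambient displacement.
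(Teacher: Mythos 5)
Your argument is correct and is, in spirit, the same as the paper's — the paper dispatches the lemma in one sentence ("continuity of $\tilde{\cF}^{cu}_x(x)$ in the $C^1$ topology"), and you have simply spelled out what that continuity buys: a uniformly $C^1$ graph representation of the fake leaf, transverse intersection with $\cN_x$, implicit function theorem, and standard graph estimates. The key facts you use — $T_x\tilde{\cF}^{cu}_x(x)=F^{cu}(x)$, $X(x)\in F^{cu}(x)$, compactness giving uniform $C^1$ moduli — are exactly what the one-line proof implicitly invokes.

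Two small points worth flagging. First, you cite Lemma~\ref{l.bothdominatedsplitting} for $X(x)\in F^{cu}(x)$, but that lemma assumes transitivity, which $\tilde\Lambda$ need not satisfy; the correct (and elementary) reason is the boundedness argument already used in the proof of Lemma~\ref{l.singularities}: if $X(x)$ had a nonzero $E^s$-component, then $\|X(\phi_{-t}(x))\|=\|\Phi_{-t}X(x)\|\to\infty$ by domination plus uniform backward expansion of $E^s$, contradicting $\|X\|\leq\max_M\|X\|$. Second, your closing heuristic for why the $B_\vep(\Sing(X))$ exclusion is needed is actually incorrect: you say that as $\|X(x)\|\to 0$ the space $F^{cu}(x)$ becomes "asymptotically tangent to $\cN_x$," but since $X(x)\in F^{cu}(x)$ and $X(x)\perp\cN_x$, the subspace $F^{cu}(x)$ always contains the normal direction to $\cN_x$ and the transversality angle is identically $\pi/2$, independent of $\|X(x)\|$. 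The genuine reason to exclude a neighborhood of $\Sing(X)$ is more prosaic: one needs $\cN_x=X(x)^\perp$ to be defined, and one wants $x$ ranging over a compact set on which $\hat X(x)=X(x)/\|X(x)\|$ (hence $\cN_x$) is continuous, so that the constants produced by your implicit-function argument can be taken uniform by compactness. This does not affect the validity of the rest of your proof, which is sound.
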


\begin{proof}

This comes from the continuity of $\tilde{\cF}^{cu}_{x}(x)$ in the $\C^1$ topology.

\end{proof}

\begin{definition}\label{d.holonomy}

For any regular point $x\in U$, denote $P_x$ the projection along the flow:
$$P_x: B_\delta(x)\rightarrow B_{r_0}^{\bot}(x)$$
which is defined in a neighborhood of $x$. For any point $y$ in this neighborhood, define
$t_x(y)$ the time which satisfies $y_{t_x(y)}=P_x(y)$.

\end{definition}

\begin{lemma}\label{l.localholonomy}

For $\vep>0$ and $1<b_0<\frac{1}{\lambda_0}$, there is $\delta>0$ such that for any $x$ satisfying 
$(x_t)_{t\in[0,1]}\subset \tilde{\Lambda}
\setminus B_{\vep}(\Sing(X))$, and for any $y\in B_\infty(x,\delta)$:  $$d_{x_1}^*(x_1,P_{x_1}(y_1))>b_0\frac{\|X(x)\|}{\|X(x_1)\|}d^*_x(x,P_x(y)).$$

\end{lemma}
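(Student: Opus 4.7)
The plan is to extract the inequality from sectional-expansion of $F^{cu}$ via the linear Poincar\'e flow $\psi_1$, and then upgrade that linear estimate to a nonlinear one on the transverse cu slices by uniform $\C^1$-continuity on the compact set $\tilde{\Lambda}\setminus B_\vep(\Sing(X))$. Two preparatory facts are needed. First, $X(x)\in F^{cu}(x)$ at every regular $x\in\tilde{\Lambda}$: decomposing $X(x)=v_s+v_{cu}$ with $v_s\in E^s(x)$, uniform contraction of $E^s$ forces $\|\Phi_{-t}(v_s)\|\to\infty$, yet $\Phi_{-t}(v_s)$ equals the $E^s$-component of $X(x_{-t})$, which stays bounded on $\tilde{\Lambda}$; hence $v_s=0$. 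Second, for $\delta<r_0$ small enough that $|t_x(y)|<r_1$ for every $y\in B_\infty(x,\delta)$, Lemma~\ref{l.bowenball} and Corollary~\ref{c.localcoherence} place $P_x(y)\in\tilde{\cF}^{cu}_x(x)\cap B^\bot_\delta(x)$; similarly, since $f(\tilde{\cF}^{cu}_x(x))\subset\tilde{\cF}^{cu}_{x_1}(x_1)$ by Lemma~\ref{l.fakefoliation}(ii), also $P_{x_1}(y_1)\in\tilde{\cF}^{cu}_{x_1}(x_1)\cap B^\bot_\delta(x_1)$.

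For the linear bound, fix any nonzero $v\in F^{cu}(x)\cap\cN_x$ and set $\Sigma_v=\mathrm{span}(X(x),v)$. Since $X(x),v\in F^{cu}(x)$ and $v\perp X(x)$, $\Sigma_v$ is a $2$-dimensional subspace of $F^{cu}(x)$, so sectional-expansion yields $|\det(\Phi_1|_{\Sigma_v})|>1/\lambda_0$. Because $\Phi_1(X(x))=X(x_1)$ and $\psi_1(v)$ is the component of $\Phi_1(v)$ orthogonal to $X(x_1)$, the determinant is the ratio of parallelogram areas, so
\[
\frac{\|X(x_1)\|\,\|\psi_1(v)\|}{\|X(x)\|\,\|v\|}>\frac{1}{\lambda_0},
\qquad\text{hence}\qquad
\|\psi_1(v)\|>\frac{1}{\lambda_0}\,\frac{\|X(x)\|}{\|X(x_1)\|}\,\|v\|.
\]

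Finally, I would pass from $\psi_1$ to distances. The map $H$ sending $P_x(y)\mapsto P_{x_1}(y_1)$, from $B^\bot_\delta(x)\cap\tilde{\cF}^{cu}_x(x)$ to $B^\bot_\delta(x_1)\cap\tilde{\cF}^{cu}_{x_1}(x_1)$, is smooth (time-one flow followed by orthogonal projection), with $DH(x)=\psi_1|_{F^{cu}(x)\cap\cN_x}$. On the compact set of admissible basepoints, the flow speeds, the fake foliations, the normal sections and the derivative $DH$ vary uniformly continuously. Combining this with Lemma~\ref{l.distancecompactable} and the strict inequality $1/\lambda_0>b_0$, a standard uniform $\C^1$-estimate produces $\delta=\delta(\vep,b_0)>0$ for which $d^*_{x_1}(x_1,H(P_x(y)))>b_0\frac{\|X(x)\|}{\|X(x_1)\|}\,d^*_x(x,P_x(y))$. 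The main obstacle will be to make this last step genuinely uniform in the basepoint pair $(x,x_1)$: it rests on uniform continuity of the fake foliations (Lemma~\ref{l.fakefoliation}) and the lower bound $\|X\|\geq c(\vep)>0$ on $\tilde{\Lambda}\setminus B_\vep(\Sing(X))$, both of which hold, but one has to track carefully that the higher-order error in $H$ depends only on $\vep$ and not on the individual orbit.
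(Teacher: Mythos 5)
Your proposal is correct and takes essentially the same approach as the paper: both proofs reduce the claim to the observation that sectional expansion of $F^{cu}$ forces the Jacobian of the flow holonomy between normal sections—measured on parallelograms spanned by the flow vector and a $cu$-tangent vector—to expand by a factor close to $\frac{1}{\lambda_0}\frac{\|X(x)\|}{\|X(x_1)\|}$, with the slack $1/\lambda_0 > b_0$ absorbing all the uniform-continuity losses. The only cosmetic difference is that you extract the clean linear estimate first via $\psi_1$ at the basepoint and then invoke compactness, whereas the paper runs the holonomy backward along the curve and carries an explicit constant $C=(\lambda_0 b_0)^{-1/6}$ through each approximation step; the underlying mechanism is identical.
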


\begin{proof}
Write $C=(\frac{1}{\lambda_0b_0})^{1/6}$ and take $0<t_0<\min\{\frac{r_0}{2},\frac{r_1}{2}\}$ such that for any
2-dimensional subspace $\Sigma$ in the tangent space, $1/C<\Jac(\Phi_t|_{\Sigma})<C$ for any $|t|<t_0$. 
For two vectors $u,v\in T_xM$, denote by $\cP[u,v]$ the parallelogram defined by these two vectors 
and $\cA(u,v)$ its area.

When $\delta>0$ is small, for $y\in B_\infty(x,\delta)$, $|t_x(y)|, |t_{x_1}(y_1)|<t_0$. Then by Corollary~\ref{c.localcoherence}, 
$P_{x}(y)\in \tilde{\cF}^{cu}_x(x)\cap B^\bot_{r_0}(x)$
and $P_{x_1}(y_1)\in \tilde{\cF}^{cu}_{x}(x)\cap B^\bot_{r_0}(x)$. 

There is $0<\vep_1\ll r_0/2$ which depends on $\vep$ and $b_0$, such that 
for any $z\in B^{\bot}_{\vep_1}(x_t)$, the following conditions are satisfied:

\begin{itemize}

\item[(a)] $\frac{1}{C}<\frac{\|X(z)\|}{\|X(x_t)\|}<C$.

\item[(b)] For $v\in T_z B^\bot_{\vep_1}(x_t)$, we have $\frac{1}{C}\|v\|\|X(z)\|\leq \cA(v,X(z))\leq C\|v\|\|X(z)\|$.
\end{itemize}

We may further suppose that $\delta$ is sufficiently small, such that
$d^*_x(x,P_x(y))<\vep_1$, $d^*_{x_1}(x_1,P_{x_1}(y_1))<\vep_1$ and by Lemma~\ref{l.distancecompactable}:

\begin{equation}\label{eq:compatable}
\begin{split}
d(x,P_x(y))& <  d^*_x(x,P_x(y))<Cd(x,P_x(y)),\\
d(x_1,P_{x_1}(y_1))& < d^*_{x_1}(x_1,P_{x_1}(y_1))<Cd(x_1,P_{x_1}(y_1)).\\ 
\end{split}
\end{equation}
Moreover, take a curve $\tilde{l}_1\subset B^\bot_{\vep_1}(x_1)\cap \hat{\cF}^{cu}_{x_1}(x_1)$ which links $x_1$ and $P_{x_1}(y_1)$
with $\length(\tilde{l}_1)=d_{x_1}^*(x_1,P_{x_1}(y_1)))$, and write $\tilde{l}_0=f^{-1}(\tilde{l}_1)$ and $l=P_{x}(\tilde{l}_0)$, we suppose $|t_x|_{\tilde{l}_0}|\leq t_0$ and $l\subset B^{\bot}_{\vep_1}(x)$.
Then $l$ is a curve contained in $B_{\vep_1}^\bot(x)$ which connects $x$ and $P_x(y)$.
We note that although by the local invariance, $\tilde{l}_0\subset \tilde{\cF}^{cu}_{x}(x)$, 
$l$ is not necessary contained in $\tilde{F}^{cu}_{x}(x)$.
Denote $H$ the smooth holonomy map between $\tilde{l}_1$ and $l$ which is induced by flow, then 
$H(P_{x_1}(y_1))=P_x(y)$.
We claim that 
$$\|dH\|\leq \lambda_0 C^5\frac{\|X(x_1)\|}{\|X(x)\|}.$$
Which implies that
$$d(x,P_x(y))\leq \length(l)\leq C^5\lambda_0\frac{\|X(x_1)\|}{\|X(x)\|}d_{x_1}^*(x_1,P_{x_1}(y_1)).$$
Suppose this claim for a while, as a corollary of~\eqref{eq:compatable}, we conclude the proof of this lemma:
\begin{equation*}
\begin{split}
d^*_x(x,P_x(y))& <Cd(x,P_x(y))\\
& <C^6\lambda_0\frac{\|X(x_1)\|}{\|X(x)\|}d_{x_1}^*(x_1,P_{x_1}(y_1))\\
& =\frac{1}{b_0}\frac{\|X(x_1)\|}{\|X(x)\|}d_{x_1}^*(x_1,P_{x_1}(y_1)).\\
\end{split}
\end{equation*}
It remains to prove this claim.

For any $\tilde{z}_1\in \tilde{l}_1$, denote $\tilde{v}$ a tangent vector of $\tilde{l}_1$, write
$\tilde{z}_0=\phi_{-1}(\tilde{z}_1)$, $z=H(\tilde{z})\in l$ and $v=dH^{-1}(\tilde{v})$. 

Then by sectional-hyperbolic, 
$$\cA(\Phi_{-1}(\tilde{v}),X(\tilde{z}_0))\leq \lambda_0 \cA(\tilde{v},X(\tilde{z}_1)).$$

Because $|t_x|_{\tilde{l}_0}|<t_0$, by the assumption on $t_0$, 
\begin{equation}\label{eq.2}
\cA(\Phi_{t_x(\tilde{z}_0)}\cP[\Phi_{-1}(\tilde{v}),X(\tilde{z}_0)]) \leq C\lambda_0 \cA(\tilde{v},X(\tilde{z}_1)).
\end{equation}
Since $\Phi_{t_x(\tilde{z}_0)}(\Phi_{-1}(\tilde{v}))=\Phi_{-1+t_x(\tilde{z}_0)}(\tilde{v})$ and
$\Phi_{t_x(\tilde{z}_0)}(X(\tilde{z}_0))=X(z)$,  we have
\begin{equation}\label{eq.3}
\cA(\Phi_{t_x(\tilde{z}_0)}\cP[\Phi_{-1}(\tilde{v}),X(\tilde{z}_0)])=\cA(\Phi_{-1+t_x(\tilde{z}_0)}(\tilde{v}),X(z)).
\end{equation}

Note that $dH(\tilde{v})$ is the projection of $\Phi_{-1+t_x(\tilde{z}_0)}(\tilde{v})$ along $X(z)$ on
$T_zB^\bot(x)$, combine equations~\eqref{eq.2},~\eqref{eq.3}, we have that
\begin{equation}
\cA(v,X(z))= \cA(\Phi_{-1+t_x(\tilde{z}_0)}(\tilde{v}),X(z)) \leq C\lambda_0 \cA(\tilde{v},X(\tilde{z}_1)).
\end{equation}

By the assumptions (a) and (b) above, $C^{-2}\|v\|\|X(x)\|\leq C^3 \lambda_0\|\tilde{v}\|\|X(x_1)\|$, which implies:
$$\|dH_{\tilde{z}}\|= \frac{\|v\|}{\|\tilde{v}\|}\leq C^5\lambda_0\frac{\|X(x_1)\|}{\|X(x)\|}.$$
\end{proof}

\begin{lemma}\label{l.measureawayfromsingularity}

For every $\vep>0$, there are $\delta>0$ and $K>0$ such that for any non-trivial ergodic measure $\mu$ which
satisfies $\mu(B_\vep^h(\sigma_i))=0$ for every $i=1,\dots, k$, and for any $x\in\supp(\mu)$, $B_\infty(x,\delta)$
is a flow segment with length bounded by $K>0$.

\end{lemma}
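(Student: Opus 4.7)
The plan is to combine the measure hypothesis with the avoidance results of Lemmas~\ref{l.awayfromIi} and~\ref{l.extremepoints} to push $\supp(\mu)$ uniformly away from $\Sing(X)$, then iterate the expansion of Lemma~\ref{l.localholonomy} to force every $y\in B_\infty(x,\delta)$ to lie on the flow orbit of $x$, and finally bound the length using the bounded size of the fake $cu$ leaf and a uniform lower bound on $\|X\|$.

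First I would show that there exists $\vep'>0$, depending only on $\vep$, with $\supp(\mu)\cap B_{\vep'}(\Sing(X))=\emptyset$. By Remark~\ref{r.singularitiesstructure}, a small neighborhood of $\Sing(X)\cap\tilde{\Lambda}$ is exhausted by the open half-neighborhoods $B_\vep^h(\sigma_i)$, the open tubes $\cN_i(\delta^*)$ around the center segments (for a $\delta^*$ produced by Lemma~\ref{l.awayfromIi}), and the set $\Sing(X)$ itself. By hypothesis the first family has $\mu$-measure zero, by Lemma~\ref{l.awayfromIi} so does the second, and $\mu(\Sing(X))=0$ by non-triviality. Since these sets are open and $\supp(\mu)$ is closed, $\supp(\mu)$ misses their union, and a compactness argument yields the uniform $\vep'$.

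Next, fix $b_0$ with $1<b_0<1/\lambda_0$, apply Lemma~\ref{l.localholonomy} with parameters $\vep'$ and $b_0$ to obtain $\delta_1>0$, and choose $\delta\le\delta_1$ small. Because $\supp(\mu)$ is invariant and disjoint from $B_{\vep'}(\Sing(X))$, the orbit $(x_t)_{t\ge 0}$ of any $x\in\supp(\mu)$ satisfies the hypothesis of Lemma~\ref{l.localholonomy} at every unit step. For $y\in B_\infty(x,\delta)$, Lemma~\ref{l.bowenball} gives $y\in\tilde{\cF}^{cu}_x(x)$ and Corollary~\ref{c.localcoherence} puts $P_x(y)$ in $\tilde{\cF}^{cu}_x(x)\cap B^\bot_{r_0}(x)$. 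Iterating the estimate of Lemma~\ref{l.localholonomy} then gives
$$d^*_{x_n}\bigl(x_n,P_{x_n}(y_n)\bigr)\ge b_0^n\,\frac{\|X(x)\|}{\|X(x_n)\|}\,d^*_x\bigl(x,P_x(y)\bigr).$$
The ratio $\|X(x)\|/\|X(x_n)\|$ is bounded below by $c_0/D_0>0$, where $c_0=\inf_{\supp(\mu)}\|X\|>0$ and $D_0=\sup_M\|X\|$. Hence if $d^*_x(x,P_x(y))>0$ the left side would grow without bound, contradicting the $O(\delta)$ bound from Lemma~\ref{l.distancecompactable}. Therefore $P_x(y)=x$, so $y$ lies on the flow orbit of $x$.

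Finally, write $B_\infty(x,\delta)=\{\phi_t(x):t\in T\}$ with $T\subset\RR$ closed and $0\in T$. Since $B_\infty(x,\delta)\subset B_\delta(x)\cap\tilde{\cF}^{cu}_x(x)$ has bounded intrinsic diameter and the flow speed is at least $c_0$, the connected component of $T$ through $0$ has time-length bounded by a constant $K$ depending only on $r_0$ and $c_0$. To rule out extra components of $T$, I would re-apply the expansion of Lemma~\ref{l.localholonomy} to a small transverse perturbation of the orbit at a hypothetical re-entry point inside $\tilde{\cF}^{cu}_x(x)$, deriving a contradiction with the closeness required by $B_\infty(x,\delta)$. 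This last step is the main obstacle: the expansion argument controls the transverse direction within the $cu$ leaf, but must be combined carefully with the sectional-expanding nature of $F^{cu}$ and the transversality of $X$ to $\cN_x$ to rule out multiple returns of the flow orbit into $B_\infty(x,\delta)$.
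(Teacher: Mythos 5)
Your approach matches the paper's proof step for step: use Lemma~\ref{l.awayfromIi} together with the hypothesis $\mu(B_\vep^h(\sigma_i))=0$ to push $\supp(\mu)$ uniformly off the singular set, iterate Lemma~\ref{l.localholonomy} to obtain $d^*_{x_n}\bigl(x_n,P_{x_n}(y_n)\bigr)>b_0^n\,\frac{\|X(x)\|}{\|X(x_n)\|}\,d^*_x\bigl(x,P_x(y)\bigr)$, use the uniform two-sided bound on $\|X\|$ over $\supp(\mu)$ to conclude $d^*_x(x,P_x(y))=0$, and then extract the length bound. (Incidentally, your inequality has the flow-speed ratio in the correct orientation; the displayed formula in the paper appears to have inverted it, though this does not affect the argument since the ratio is bounded both ways.)

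The one place you go astray is the final paragraph, where you treat multiple connected components of $T=\{t:\phi_t(x)\in B_\infty(x,\delta)\}$ as an open problem requiring a further expansion argument. It does not. The equality $d^*_x(x,P_x(y))=0$ says $P_x(y)=x$, and by Definition~\ref{d.holonomy} the projection $P_x$ sends $y$ to the unique nearby point $\phi_{t_x(y)}(y)$ on $B^\bot_{r_0}(x)$, with $|t_x(y)|$ uniformly small on $B_\delta(x)$ once $\delta$ is chosen small. So $P_x(y)=x$ already forces $y=\phi_{-t_x(y)}(x)$ with $|t_x(y)|$ bounded by a uniform constant, i.e.\ $y$ lies on the \emph{local} orbit segment through $x$ of a priori bounded flow-time. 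This is stronger than ``$y\in\Orb(x)$'' and immediately rules out a second return component; the length bound $K$ then follows from $\sup_M\|X\|<\infty$ applied to that time interval. In short, the projection $P_x$ does not merely say $y$ is somewhere on the orbit of $x$ — it pins $y$ down to a small flow-time of $x$, so the concern you raise at the end is already resolved by the step you had just completed.
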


\begin{proof}
By Lemma~\ref{l.awayfromIi} and the assumption, $\supp(\mu)$ is away from singularities.
Apply Lemma~\ref{l.localholonomy} on $\{x_i; i\in \mathbb{N}\}$, there is $\delta>0$
such that for any $y\in B_\infty(x,\delta)$:
$$d_{x_n}^*(x_n,P_{x_n}(y_n))>b^n_0\frac{|X(x_n)|}{|X(x)|}d^*_x(x,P_x(y)).$$

Note that $\supp(\mu)\cap \Sing(X)=\emptyset$, $X|_{\supp(\mu)}$ is uniformly bounded from above and below.
Because $d_{x_n}^*(x_n,P_{x_n}(y_n))$ is bounded from above, we conclude that 
$d^*_x(x,P_x(y))=0$, i.e., $y$ belongs to the local orbit of $x$, and
there is $K>0$ such that each connected component of $\Orb(x)\cap B_\delta(x)$ has length bounded by $K$.
We complete the proof.
\end{proof}

Now let us deal with the case $\supp(\mu)$ contains singularities. We will show that there exists 
$\delta>0$ such that for every $y\in B_\infty(x,\delta)$, $d_{x_n}^*(x_n,P_{x_n}(y_n))$ is expanding when $x$ and $x_n$ both 
are away from singularities. 

\begin{definition}\label{d.accumulated}

An extreme singularity $\sigma_i$ ($1\leq i \leq k$) is {\it accumulated} if there is a non-trivial
ergodic measure $\mu$ of $\phi_t$, such that $\mu(B^h_{\delta_0}(\sigma_i))>0$. 
\end{definition}

\begin{remark}\label{r.accumulated}
By Lemma~\ref{l.extremepoints}, for each accumulated singularity $\sigma_i$, $B^h_{\delta_0}(\sigma_i)$ 
is topologically hyperbolic. 
\end{remark}

For $1\leq i \leq k$ and $0<\delta\ll \delta_0$, we consider the set 
$$J^h_\delta(\sigma_i)=\{y:y\in B_{\delta_0}^h(\sigma_i)\;\;\; \text{with}\;\;\; \|X(y)\|\leq\delta\}.$$
There is $\delta_1>0$ small enough, such that $\partial(J_{\delta_1}^h(\sigma_i))\cap \partial (B^h_{\delta_0}(\sigma_i))=\emptyset$ where $\partial(.)$ denotes the boundary of a set. 

\begin{lemma}\label{l.nearsingularity}
For every $L>1$, there are $0<\vep\ll \delta_1$ and $\delta>0$ such that for each accumulated singularity
$\sigma$ and any $x\in B^h_{\delta_0}
(\sigma)\cap \tilde{\Lambda}$ which satisfies 
\begin{itemize}

\item $(x_t)_{t\in[0,T]}\subset B^h_{\delta_0}(\sigma)$ and $(x_t)_{t\in[0,T]}\cap B^h_{\vep}(\sigma)\neq \emptyset$.

\item $x,x_{T}\notin J_{\delta_1}(\sigma)$, $(x_t)_{t\in[0,1]} \cap \partial{J^h_{\delta_1}}(\sigma)\neq 
      \emptyset$ and $(x_t)_{t\in[T-1,T]} \cap \partial{J^h_{\delta_1}}(\sigma)\neq\emptyset$.

\end{itemize}

then for any $y\in B_\infty(x,\delta)$, we have
$$d^*_{x_{T}}(x_{T},P_{x_{T}}(y_{T}))>Ld^*_x(x,P_x(y)).$$

\end{lemma}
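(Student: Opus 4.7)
The plan is to iterate, throughout the entire segment $[0,T]$, the infinitesimal area-expansion inequality that drove the one-step proof of Lemma~\ref{l.localholonomy}. The sectional-hyperbolic condition provides area growth by at least $\lambda_0^{-1}$ per unit time on every $2$-plane in the $F^{cu}$-cone, with no assumption that the orbit stays away from singularities; the only delicate estimate in Lemma~\ref{l.localholonomy} that fails near $\sigma$ is the distortion control $\|X(z)\|\sim\|X(x_t)\|$, which I will avoid invoking inside $J^h_{\delta_1}(\sigma)$ by carrying the weighted quantity $\|X(x_n)\|\cdot d^*_{x_n}(x_n,P_{x_n}(y_n))$ through the iteration and paying the $\C^1$ distortion only at the endpoints $x$ and $x_T$, where $\|X\|$ lies between $\delta_1$ and $\|X\|_{\C^0}$.

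First I would show $T\geq c(\delta_1)\log(\delta_1/\vep)$, uniformly in the accumulated singularity $\sigma$. Since $X$ vanishes on $\sigma$ and is Lipschitz, $\|X(z)\|\leq\|X\|_{\C^1}\,d(z,\sigma)$, so a standard Gronwall inequality yields $d(x_s,\sigma)\geq e^{-\|X\|_{\C^1}|t-s|}d(x_t,\sigma)$; comparing at some $s$ with $x_s\in B^h_\vep(\sigma)$ against $t\in\{0,T\}$ (where $\|X(x_t)\|\geq\delta_1$ forces $d(x_t,\sigma)\geq\delta_1/\|X\|_{\C^1}$) gives the claimed bound. Next I would take a curve $\tilde{l}_T\subset\tilde{\cF}^{cu}_{x_T}(x_T)\cap B^\bot_{\vep_1}(x_T)$ joining $x_T$ to $P_{x_T}(y_T)$ with length $d^*_{x_T}(x_T,P_{x_T}(y_T))$, inductively define $\tilde{l}_n:=f^{-1}(\tilde{l}_{n+1})\subset\tilde{\cF}^{cu}_{x_n}(x_n)$ via the local invariance of the fake foliation, and project along the flow onto $B^\bot(x_n)$ to obtain a curve $l_n$ joining $x_n$ to $P_{x_n}(y_n)$ (the endpoints match by the flow-invariance of $P_{x_n}$).

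For each tangent vector $\tilde{v}_T$ to $\tilde{l}_T$ whose backward flow-image projects to $v_n$ on $l_n$, the sectional-hyperbolic inequality applied to the $2$-plane spanned by $v_n$ and $X(x_n)$ yields
\begin{equation*}
\cA(v_n,X(x_n))\leq \lambda_0^{T-n}\cA(\tilde{v}_T,X(x_T)),
\end{equation*}
the identity from the proof of Lemma~\ref{l.localholonomy} iterated $T$ times. Setting $n=0$ and using $\cA(v,X(x))\geq\|v\|\,\|X(x)\|/C_0$ on $B^\bot(x)$ (with the reverse bound at $x_T$), integration along the curves produces
\begin{equation*}
\length(l_0)\cdot\|X(x)\|\leq C\lambda_0^T\cdot d^*_{x_T}(x_T,P_{x_T}(y_T))\cdot\|X(x_T)\|.
\end{equation*}

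Finally, since $x,x_T\notin J^h_{\delta_1}(\sigma)$, both $\|X(x)\|$ and $\|X(x_T)\|$ lie in $[\delta_1,\|X\|_{\C^0}]$, and Lemma~\ref{l.distancecompactable} gives $d^*_x(x,P_x(y))\leq K_0\,\length(l_0)$ with $K_0$ depending only on $\delta_1$. Combining these inequalities produces $d^*_x(x,P_x(y))\leq K_1(\delta_1)\lambda_0^T\,d^*_{x_T}(x_T,P_{x_T}(y_T))$, and the proof finishes by choosing $\vep$ so small that $K_1(\delta_1)\lambda_0^{c(\delta_1)\log(\delta_1/\vep)}<1/L$, which is possible because $\lambda_0<1$. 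The main obstacle will be ensuring that the backwardly pulled curves $\tilde{l}_n$ remain inside the radius $r_0$ of validity of the fake foliation at every intermediate step, even though their diameter can grow backwards in time as the orbit slows down near $\sigma$; this forces $\delta$ to be chosen after $\vep$ (and hence after the total expansion $\lambda_0^{-T}$) is fixed, producing the dependence $\delta=\delta(L)$ asserted in the statement.
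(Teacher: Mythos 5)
Your area-iteration strategy is close in spirit to the paper's: the engine is the same one-step sectional-area contraction that drives Lemma~\ref{l.localholonomy}, the entry/exit bounds $\|X(x)\|,\|X(x_T)\|\geq\delta_1$ are exploited the same way, and your Gronwall-type lower bound $T\geq c(\delta_1)\log(\delta_1/\vep)$ is a useful formalization of the paper's remark that $T\to\infty$ as $\vep\to0$. However, the paper inserts an ingredient that you omit, and the omission creates a real gap. The paper builds a local product structure inside $\tilde{\cF}^{cu}_{x_n}(x_n)$ out of an auxiliary one-dimensional center fake foliation $\overline{\cF}^c=\hat{\cF}^{cs}\cap\tilde{\cF}^{cu}$ and an expanding disk $\overline{\cD}^u$, decomposes the connecting curve into a $c$ piece and a $u$ piece, and treats each by a different argument. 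You instead pull back the whole curve $\tilde{l}_n=f^{n-T}(\tilde{l}_T)$ directly. The ``main obstacle'' you flag at the end is indeed the gap, and your proposed remedy --- choose $\delta$ small ``after $\vep$ (and hence after the total expansion $\lambda_0^{-T}$) is fixed'' --- does not close it, because $\vep$ only fixes a \emph{lower} bound on $T$: over the $x$'s covered by the statement, the transit time $T$ is unbounded above (it diverges as $x$ enters ever closer to $W^s(\sigma)$). Hence the backward growth $\length(\tilde{l}_n)\lesssim\|Df^{-1}\|^{T-n}\length(\tilde{l}_T)$ has no uniform bound, and no single $\delta$ keeps every intermediate $\tilde{l}_n$ inside $B_{r_0}(x_n)$ --- yet you need exactly this at each step for the fake-foliation invariance and the cone/sectional estimate to apply.

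The local product structure is precisely what defeats this obstruction. Under $f^{-1}$ the $u$ piece genuinely contracts in length. The $c$ piece $\tilde{l}^c_n$ is bounded \emph{not} by pulling back a length estimate but by a static geometric fact: it is a segment of the one-dimensional leaf $\overline{\cF}^c_{x_n}(\tilde{y}_n)$ joining $\tilde{y}_n$ and $\tilde{z}_n=[\tilde{y}_n,x_n]$, both of which stay within $O(\delta+r_\delta D)$ of $x_n$ because $y\in B_\infty(x,\delta)$ and the product structure is $f$-invariant, and a $\C^1$ segment of a one-dimensional cone-tangent leaf joining two nearby points has length comparable to the distance between them. For an unstructured curve inside the higher-dimensional $\tilde{\cF}^{cu}$, nearby endpoints impose no bound on the length of a pulled-back geodesic. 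Until you supply a replacement for this intermediate control, your concluding inequality $\length(l_0)\|X(x)\|\leq C\lambda_0^T\,d^*_{x_T}(x_T,P_{x_T}(y_T))\|X(x_T)\|$ is a formal computation rather than a proved bound, because the sectional-area estimate has not been shown to be applicable at every intermediate time.
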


In order to simplify the proof, we may assume $T\in \mathbb{N}$. The following proof depends on a local product structure in $\mathcal{\cF}^{cu}_{x_n}(x_n)$ ($n=0,\dots, T$), which blends two 
different families of fake foliations (see Remark~\ref{r.flowfakefoliation}): 
\begin{itemize}
\item $\hat{\cF}_\sigma^i$ ($i=s,c,u,cs,cu$) which comes from
the partially hyperbolic splitting $E^s\oplus E^c\oplus E^u$ of the singularities;

\item $\tilde{\cF}^i_x$ ($i=s,cu$) which comes from the sectional-hyperbolic splitting $E^s\oplus E^{cu}$ defined on
      $\tilde{\Lambda}$.
\end{itemize}

\begin{proof}[Proof of Lemma~\ref{l.nearsingularity}:]

First we define a new 1-dimensional center fake foliation $\overline{\cF}^c_{x_t}$ in $\tilde{\cF}^{cu}_{x_t}(x_t)$, 
$0\leq t\leq T$ which is induced by $\hat{\cF}^{cs}_{x_t}$:
$$\overline{\cF}^c_{x_t}(z)=\hat{\cF}^{cs}_{x_t}(z)\cap \tilde{\cF}^{cu}_{x_t}(x_t),\;\; \text{for every} \;\; z\in \cF^{cu}_{x_t}(x_t).$$
By the local invariance of the fake foliations $\tilde{\cF}^{cu}_{x_t}$ and $\hat{\cF}^{cs}_{x_t}$, the 
new center fake foliation is also locally invariant, i.e. 
$$f(\overline{\cF}^c_{x_t,r_0}(z))\subset \overline{\cF}^c_{x_{t+1}}(z_1)\;\;\; \text{for any}\;\;\; z\in \tilde{\cF}^{cu}_{x,r_0}\;\; \text{and}\;\; t\in[0,T-1].$$

Let $\overline{\cD}^u_x\ni x \subset \tilde{\cF}^{cu}_x(x)$ be a leaf with dimension $dim(E^u_\sigma)$ and tangent to the
$a_0$-$E^{u}$ cone (which is defined near to $\sigma$). By the forward invariance of the $E^u$ cone, this leaf is 
expanding for $f^n$ $0\leq n \leq T$ and the connected component of $f^n(\overline{\cD}^u_x)\cap \tilde{\cF}^{cu}_{x_n}(x_n)$
which contains $x_n$, denoted by $\overline{\cD}^u_{x_n}$, is still tangent to the $E^u$ cone. Hence, 
there is a unique transverse intersection between $\overline{\cD}^u_{x_n}$ and $\overline{\cF}^{c}_{x_n}(z)$ for any $z\in \tilde{\cF}^{cu}_{x_n}(x_n,r_0)$, we denote this intersection by 
$$[z,x_n]=\overline{\cF}^{c}_{x_n}(z)\pitchfork\overline{\cD}^u_{x_n}.$$

This local product structure is preserved by the iteration of $f$. In fact, by the local invariance of 
fake foliation $\overline{\cF}^{c}_{x_n}(.)$ and the expanding property of $\overline{\cD}^u_{x_n}$,
for every $y\in B_{\infty}(x, r_0)$, $f^n([y,x])=[y_n,x_{n}]$ for $0\leq n \leq T$.

Suppose $\delta$ is sufficiently small, there is $r_\delta>0$ such that $P_z|_{B_\delta(z)}$ is well defined and $|t_z|_{B_{\delta}(z)}|\ll r_\delta\ll r_0$ for every $z\in B_{\delta_0}^h(\sigma)\setminus J^h_{\delta_1}(\sigma)$. The choice of $\delta$ depends on $\vep$ and will be given later. Now consider $y\in B_\infty(x,\delta)$ and write
$\tilde{y}_T=P_{x_T}(y_T)$. Then $\tilde{y}=f^{-T}(\tilde{y}_T)=y_t$ for some $|t|<r_{\delta}$. In particular, 
by Corollary~\ref{c.localcoherence}, $\tilde{y}\in \tilde{\cF}^{cu}_{x}(x)\cap B_\infty(x,\delta+r_{\delta} D)$, recall that $D=\max \|X(.)\|$.

Denote $\tilde{z}=[\tilde{y},x]$, by the invariance of the product structure, $\tilde{z}_n=[\tilde{y}_n,x_n]$
for any $0\leq n \leq T$. Take $\tilde{l}_{T}=\tilde{l}_{T}^c\cup \tilde{l}_{T}^u$ a piecewise smooth curve
connecting $\tilde{y}_T=P_{x_T}(y_T)$ and $x_T$ induced from the local product structure: 
$\tilde{l}_{T}=\tilde{l}_{T}^c\cup \tilde{l}_{T}^u$ where 
$\tilde{l}_{T}^c\subset \overline{\cF}^c_{x_T}(\tilde{y}_T)$ connects 
$\tilde{y}_T$ and $\tilde{z}_T$; and $\tilde{l}_{T}^u$ is a shortest segment contained in $\overline{\cF}^u_{x_T}(x_T)$ which connects 
$x_T$ and $\tilde{z}_T$. For $0\leq n \leq T$, write $\tilde{l}_{n}=\tilde{l}^c_{n}\cup \tilde{l}^u_{n}$ a 
piecewise smooth curve connects $x_n$ and $\tilde{y}_n$ where $\tilde{l}_{n}^c=f^{n-T}(\tilde{l}_{T}^c)$ and 
$\tilde{l}_{n}^u=f^{n-T}(\tilde{l}_{T}^u)$. Then $\tilde{l}_{n}^c$ is a fake center curve contained in 
$\overline{\cF}^c_{x_n}(\tilde{y}_n)$ which connects $\tilde{y}_n$ and $\tilde{z}_n$. Recall that $\overline{\cD}^u_{x_T}$ 
is exponentially contracting under the map $f^{n-T}$, $\tilde{l}^n_u$ is contained
in the fake unstable leaf $\overline{\cD}^u_{x_n}$. By the uniform transversality between these two fake leaves, 
there are $C_1>0$ and $0<\lambda_0<1$ such that
\begin{equation}\label{e.cu}
\begin{split}
& \max\{\length(\tilde{l}^u_{(T)}),\length(\tilde{l}^c_{(T)})\} \leq C_1d(x_T,\tilde{y}_T), \\ 
& \length(\tilde{l}^c_{(n)})\leq C_1(\delta+r_\delta D)\;\; \text{and}\;\; \length(\tilde{l}^u_{n})\leq \lambda_0^{n-T}\length(\tilde{l}^u_T). \\
\end{split}
\end{equation}

Hence $\length(\tilde{l}_{(n)})\leq 2C_1(\delta+r_\delta D)$. Now we give the condition $\delta$ need to satisfy. Let $\delta$ be sufficiently small such that:
\begin{itemize}

\item $P_x(\tilde{l}_{0})=
l$ is well defined and $|t_x|_{\tilde{l}_{0}}|\leq t_0\ll r_0/4$, where $t_0$ satisfies that 
for any 2-dimensional subspace $\Sigma$ contained in the tangent space and for any $|t|\leq t_0$, $|\Jac(\Phi_t|_{\Sigma})|\leq C_1$.

\item by Lemma~\ref{l.distancecompactable}:
$$d(x,P_x(y))\leq d^*_x(x,P_x(y))\leq C_1d(x,P_x(y))$$ 
$$d(x_T,\tilde{y}_T)\leq d^*_{x_T}(x_T,\tilde{y}_T)\leq C_1d(x_T,\tilde{y}_T).$$

\end{itemize}

We claim that there is $C_2$ such that $\frac{1}{C_2}<\frac{\length(\tilde{l}^u_0)}{\length(l^u)}<C_2$,
which follows from the observation that the half neighborhood $B^h_{\delta_0}(\sigma)$ is topologically hyperbolic,
i.e., if we denote $\hat{\cF}^{cs,h}_\sigma(\sigma, r_2)$ the corresponding branch of center stable fake leaf located
in the same side as the half neighborhood, then by Lemma~\ref{l.extremepoints}, $\hat{\cF}^{cs,h}_\sigma(\sigma, r_2)$ is contained in the stable
set of $\sigma$. When $\vep$ is sufficiently small, $x$ is close to $\hat{\cF}^{cs,h}_\sigma(\sigma, r_2)$, therefore, the flow
vector $X(x)$ is tangent to the $E^s\oplus E^c$ cone, and is transverse to the $E^u$ cone. Note that $\tilde{l}^u_0$ is 
tangent to the $E^u$ cone, it has a uniform angle with the flow direction. Then one may apply the idea of the proof
in Lemma~\ref{l.localholonomy}. More precisely, consider the holonomy map between $\tilde{l}^u_0$ and
$l^u$ induced by the flow, we are going to 
compare the areas of parallelograms generated by the flow directions and the tangent vectors. Because the Jacobian restricted
on the parallelograms are bounded, and which are equivalent to the product of length of vector of flow and the tangent vector,
by the reason of the uniform angle between them. We conclude the proof of this claim.

By the claim above, we deduce that 
\begin{equation}\label{e.u}
\length(l^u)\leq C_2\lambda_0^T\length(\tilde{l}^u_T).
\end{equation}
 
The estimation of $\length(l^c)$ is quite different, we use a similar argument as in the proof of 
Lemma~\ref{l.localholonomy}. Let us give more explanation. Consider the tangent map of the 
holonomy map $H: \tilde{l}^c_T\rightarrow l^c$ induced by the flow, 
which contracts exponentially the area of the parallelogram formed by the tangent vector of 
$\tilde{l}^c_T$ and the flow vector. For this parallelogram and its image, both areas are equivalent to the product of the length of the tangent vector and the length of the flow vector, we conclude the exponentially contraction of the map $H$.
There is $C_3>0$ such that that
\begin{equation}\label{e.c}
\length(l^c)\leq \length(\tilde{l}^c_T) \lambda_0^{T}C_3^3\frac{\|X(x_T)\|}{\|X(x)\|}.
\end{equation}

Let $\hat{C}=\max\{C_1,C_2, C_3\}$. We now give the condition what $\vep$ need to satisfy. It is easy to see that 
$\lim_{\vep\to 0}T\to \infty.$ 
We assume $\vep$ is sufficiently small such that $\lambda_0^T\hat{C}^3(\hat{C}^2D/\delta_1+1)\leq L^{-1}$, 
that is, $T>-\frac{\ln L\hat{C}^3(\hat{C}^2D/\delta_1+1)}{\ln \lambda_0}$.

Because $x\notin J_{\delta_1}^h(\sigma)$, $\|X(x)\|\geq \delta_1$. By \eqref{e.cu}, \eqref{e.u} and \eqref{e.c}, one has that 
\begin{equation*}
\begin{split}
 d^*_x(x,P_x(y)) & \leq \hat{C} d(x,P_x(y)) \leq \hat{C}(\length(l^c)+\length(l^u))\\
& \leq \hat{C}[C_2\lambda_0^T\length(\tilde{l}^u_T)+\lambda_0^{T}C_3^3\frac{\|X(x_T)\|}{\|X(x)\|}\length(\tilde{l}^c_T)]\\ 
& \leq  \lambda_0^{T}\hat{C}^3(\hat{C}^2\frac{D}{\delta_1}+1)d_{x_T}^*(x_T,P_{x_T}(y_T)) \\
& \leq \frac{1}{L}d_{x_T}^*(x_T,P_{x_T}(y_T)). \\
\end{split}
\end{equation*}

The proof is finished.
\end{proof}

\subsubsection{Proof of Proposition~\ref{p.lorenzexpansive}\label{sss.entropyexpansive}}

Now we are preparing to finish the proof of Proposition~\ref{p.lorenzexpansive}.

\begin{proof}[Proof of Proposition~\ref{p.lorenzexpansive}:]

We first prove that $\phi_t$ is entropy expansive restricted on a small attracting neighborhood of $\Lambda$.

By Lemma~\ref{l.entropyexpansive}, we need show that there is $\delta>0$ such that 
for any invariant ergodic measure $\mu$ of $f$ and for $\mu$ almost every $x\in M$, we have 
$ h_{top}(f, B_\infty(x,\delta))=0.$

Fix $1<b_0<\frac{1}{\lambda_0}$. Let 
$$D^{'}=\min\{\frac{\|X(x)\|}{\|X(x^{'})\|};\text{there are singularities}\;\; \sigma, \sigma^{'}\;\text{such that}$$
$$x\in (\phi_t(\partial(J^h_{\delta_1}(\sigma))))_{t\in [0,1]} \;\; \text{and}\;\; x^{'}\in (\phi_t (\partial (J^h_{\delta_1}(\sigma^{'}))))_{t\in [-1,0]} \}.$$
For $L=b_0/D^{'}$,
apply Lemma~\ref{l.nearsingularity}, we obtain $\vep$ and $\delta$. By taking a smaller value, we assume that 
$\delta<r_0/2$ and satisfies Lemma~\ref{l.localholonomy} for the $\vep$ above.

If $\mu$ is trivial, that means, it is
an atomic measure supported on a singularity $\sigma$. Then by Lemma~\ref{l.singularitiesexpansive},
$B_\infty(x,\delta)$ is a 1-dimensional center segment with length bounded by $L_0>0$, hence has 
vanishing topological entropy.

From now on, we suppose $\mu$ is non-trivial, and $x$ is any generic point of $\mu$. By 
Lemma~\ref{l.singularitiesexpansive}, we assume that $x\notin B_{r_0}(\Sing(X))$. 

Define a sequence of integers 
$0\leq T_1<T_1^{'}<T_2<T_2^{'}\dots$ such that for each $n\geq 1$, the following conditions are satisfied:
\begin{itemize} 
\item $(x_t)_{[T^{'}_n, T_{n+1}]}\cap B_{\vep}(\Sing(X))=\emptyset$;

\item there is a singularity $\sigma$ such that $(x_t)_{[T_n,T_n^{'}]}\subset B^h_{r_0}(\sigma)$ and $(x_t)_{[T_n,T_n^{'}]}\cap B^h_{\vep}(\sigma)\neq \emptyset$; 
\item $x_{T_n},x_{T_n^{'}} \notin J^h_{\delta_1}(\sigma)$, $(x_{t})_{[T_n,T_n+1]}\cap \partial{J^h_{\delta_1}(\sigma)}\neq \emptyset$
 and $(x_{t})_{[T_n^{'}-1.T_n^{'}]}\cap \partial{J^h_{\delta_1}(\sigma)}\neq \emptyset$.

\end{itemize}

If the above sequence is bounded, this implies that $\mu(B_\vep(\sigma))=0$. Then by 
Lemma~\ref{l.measureawayfromsingularity}, $B_\infty(x,\delta)$ is a 1-dimensional segment with bounded length, 
therefore has topological entropy zero.

Hence we assume the sequence is defined with infinite length. For $n\geq 1$, by Lemma~\ref{l.localholonomy}, we have
\begin{equation*}
\begin{split}
d^*_{x_{T_{n+1}}}(x_{T_{n+1}},P_{x_{T_{n+1}}}(y_{T_{n+1}})) & > b_0^{T_{n+1}-T_{n}^{'}}\frac{\|X(x_{T_{n+1}})\|}{\|X(x_{T_n^{'}})\|}d^*_{x_{T_{n}^{'}}}(x_{T_{n}^{'}},P_{x_{T_{n}^{'}}}(y_{T_{n}^{'}})) \\
& \geq b_0^{T_{n+1}-T_{n}^{'}}D^{'}d^*_{x_{T_{n}^{'}}}(x_{T_{n}^{'}},P_{x_{T_{n}^{'}}}(y_{T_{n}^{'}})). \\
\end{split}
\end{equation*}
And by Lemma~\ref{l.nearsingularity}, 
$$d^*_{x_{T_{n}^{'}}}(x_{T_{n}^{'}},P_{x_{T_{n}^{'}}}(y_{T_{n}^{'}}))>\frac{b_0}{D^{'}}d^*_{x_{T_{n}}}(x_{T_{n}},P_{x_{T_{n}}}(y_{T_{n}})).$$
As a conclusion, we conclude that 
$$d^*_{x_{T_{n}^{'}}}(x_{T_{n}^{'}},P_{x_{T_{n}^{'}}}(y_{T_{n}^{'}}))>b_0^nd^*_{x_{T_1}}(x_{T_1},P_{x_{T_1}}(y_{T_1})).$$
This implies that $d^*_{x_{T_1}}(x_{T_1},P_{x_{T_1}}(y_{T_1}))=0$. Therefore, $y$ belongs to the local orbit of $x$. In particular,
$B_\infty(x,\delta_1)$ is a segment of the orbit of $x$, and has topological entropy zero.

It remains to prove that when $\Lambda$ is a quasi attractor and all the singularities are hyperbolic, 
this entropy expansiveness also holds for nearby flows. Take $U$ be a sufficiently small contratcing neighborhood.
Because the singularities are all isolated, each singularity can be treated as
extreme points, and $\Lambda$ contains only finitely many singularities. By a similar argument as Lemma~\ref{l.extremepoints}, one can show that each singularity has negative center exponent. One may check that Lemmas~\ref{l.localholonomy},~\ref{l.measureawayfromsingularity} and~\ref{l.nearsingularity}
all hold robustly for nearby flows restricted in $U$, so does the entropy expansiveness.

The proof is complete.
\end{proof}

\section{Lower semi-continuation of topological entropy~\label{s.lowercontinuiation}}
In this section, we will obtain the lower semi-continuation of the topological entropy.
In Subsubsection~\ref{ss.Katok}, we assume Theorem~\ref{t.shadowing}, a $\C^1$ version of Pesin theory
for flows, to finish the proof of Theorem~\ref{t.lowercontinuiation} and Corollary~\ref{c.homoclinicclass}. The proof of Theorem~\ref{t.shadowing} is postponed to Appendix~\ref{s.Pesin}. The proof of
Corollary~\ref{main.B}, Corollary~\ref{main.C} and Theorem~\ref{main.D} are given in Subsubsection\ref{ss.rest}.

\subsection{Proof of Theorem~\ref{t.lowercontinuiation} and Corollary~\ref{c.homoclinicclass}\label{ss.Katok}}

First let us state a $\C^1$ version of Pesin theory for flows, where we replace the regularity of maps by domination
on the tangent bundle. Similar
statements for diffeomorphisms can be found at~\cite{ABC,Y}.

\begin{theorem}\label{t.shadowing}
Let $\Lambda$ be a compact invariant set of flow $\phi_t$ and admit a dominated splitting
$E\oplus F$. Suppose $\mu$ is a non-trivial hyperbolic ergodic measure supported on $\Lambda$ with index $dim(E)$ 
and $\tilde{\mu}$ an ergodic decomposition of $\mu$ respect to $f$. Then there is a compact, positive $\tilde{\mu}$ measure subset 
$\Lambda_0\subset \Lambda\setminus \Sing(X)$ which satisfies the following properties: 
for any $\vep>0$, there are $L,L^{'}, \delta>0$ such that for any $x$ and $f^n(x)\in \Lambda_0$ 
with $n>L^{'}$ and $d(x,f^n(x))<\delta$, there exists a point $p\in M^d$ and a $\C^1$ strictly 
increasing function $\theta:[0,T] \rightarrow \mathbb{R}$ such that:
\begin{itemize}

\item[(a)] $\theta(0)=0$ and $1-\vep <\theta^{'}(t)<1+\vep$;

\item[(b)] $p$ is a periodic point of $\phi_t$ with $\phi_{\theta(T)}(p)=p$;

\item[(c)] $d(\phi_t(x),\phi_{\theta(t)}(p))\leq \vep\|X(\phi_t(x))\|, t\in [0,T]$;

\item[(d)] $d(\phi_t(x),\phi_t(p))\leq L d(x, \phi_T(x))$

\item[(e)] $p$ has uniform size of stable manifold and unstable manifold.

\end{itemize}
\end{theorem}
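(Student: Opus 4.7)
The plan is to establish a Katok-type closing theorem adapted to flows with dominated splitting, working with the linear Poincar\'e flow $\psi_t$ along the normal bundle $\cN$. Applying Proposition~\ref{p.Oseledets} to $\mu$ (a hyperbolic ergodic measure of index $\dim(E)$), we get an Oseledets splitting $\cN_x = \hat E^s_x \oplus \hat E^u_x$, where $\hat E^s$ is exponentially contracted and $\hat E^u$ exponentially expanded by $\psi_t$. Theorem~\ref{t.exponentsdiffandflow} identifies these with $\pi_x(E_x)$ and $\pi_x(F_x)$, so the dominated splitting $E\oplus F$ provides globally defined cones to control the transverse dynamics at every iterate.

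First I would select a Pesin block $\Lambda_0 \subset \Lambda \setminus \Sing(X)$ of positive $\tilde\mu$-measure on which $\psi_t$ satisfies uniform hyperbolicity bounds and $\|X\|$ is bounded away from zero. On $\Lambda_0$, I construct local stable and unstable manifolds $W^s_\loc(x)$, $W^u_\loc(x)$ of uniform size by the graph transform inside cross-sections transverse to the flow; invariance of the $E$- and $F$-cones makes the transform well-defined and contracting. This gives (e) and a uniform local product structure on cross-sections through $\Lambda_0$.

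Next, for $x, \phi_n(x) \in \Lambda_0$ with $d(x, \phi_n(x)) < \delta$, I carry out Liao's cross-section closing argument: push $\phi_n(x)$ onto a cross-section $\Sigma$ through $x$ via the flow, then intersect $W^u_\loc(x)$ with the stable manifold of this landed point inside $\Sigma$. The result is a periodic point of the composed Poincar\'e return map, which integrates to a genuine periodic orbit of $\phi_t$ of period $\theta(T)$ close to $T = n$. The reparameterization $\theta(t)$ appears because successive cross-sections make small angles with each other, so Poincar\'e return time differs slightly from flow time. Standard Pesin-type estimates then yield (a), (b), and (c).

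The main obstacle is (d): the \emph{real-time} estimate $d(\phi_t(x), \phi_t(p)) \leq L\, d(x, \phi_T(x))$ uniform in both $t \in [0,T]$ and $T$. The naive bound $|\theta(t) - t| \leq \vep\, t$ coming from (a) is useless as $T \to \infty$. The crucial point is that $\theta'(t) - 1$ is controlled by the normal separation between the orbits of $x$ and $p$ on the cross-section at time $t$; by the hyperbolic ``V-shape'' of the shadowing, this separation is exponentially small in $\min(t, T-t)$ with amplitude proportional to $d(x, \phi_T(x))$ at the endpoints. Summing a geometric series gives a uniform bound $|\theta(t) - t| \leq L''\, d(x, \phi_T(x))$ for all $t$, and then (d) follows from (c) combined with $d(\phi_t(p), \phi_{\theta(t)}(p)) \leq \max\|X\| \cdot |\theta(t) - t|$. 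Making this time-drift estimate quantitative, uniform in $T$, and robust as orbit pieces pass near (possibly non-hyperbolic) singularities in Liao's framework, is the technical heart of the proof.
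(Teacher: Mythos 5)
Your outline captures the correct overall shape of the argument -- select a Pesin block $\Lambda_0$, run a Liao-type closing on cross-sections, and establish item (d) by an exponential-decay estimate that sums to a geometric series -- and you have correctly identified (d) as the genuinely new part. However, there is a crucial missing ingredient: the paper works throughout not with the linear Poincar\'e flow $\psi_t$ but with the \emph{scaled} linear Poincar\'e flow
$$\psi^*_t(v)=\frac{\|X(x)\|}{\|X(\phi_t(x))\|}\,\psi_t(v),$$
and without this rescaling your construction breaks down exactly at the step you flag as unresolved, namely when the orbit segment $\phi_{[0,n]}(x)$ passes near a (possibly non-hyperbolic) singularity. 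You require $\|X\|$ bounded away from zero on $\Lambda_0$, which is fine, but the orbit segment joining two returns to $\Lambda_0$ can dip arbitrarily close to $\Sing(X)$. Near a singularity the unscaled cocycle $\psi_t$ is not bounded on fixed time intervals, Poincar\'e cross-sections of uniform size are not well-defined, and the holonomy maps between consecutive sections have uncontrolled derivative, so neither the Pesin block estimate (showing quasi-hyperbolicity of the arc $\phi_{[0,n]}(x)$ along the whole segment, not just at the endpoints in $\Lambda_0$) nor the discrete shadowing lemma can be invoked. The rescaling by $\|X\|$ makes $\psi^*_t$ a bounded cocycle with the same Lyapunov spectrum (the paper's Lemma~\ref{l.twococycles}), lets one use cross-sections $\cN_x(\beta\|X(x)\|)$ whose radius shrinks with the flow speed, and makes the rescaled holonomies $\cP^*$ uniformly $\C^1$, which is what allows the quasi-hyperbolicity and the discrete Gan--Liao shadowing to apply uniformly along the arc.

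A second, smaller gap: to convert Oseledets asymptotics into the uniform ``quasi-hyperbolic orbit arc'' structure needed for Liao's lemma, the paper does not just pick a Pesin block; it first proves (Lemma~\ref{l.goodset}) a Pliss-type statement that between any two returns to $\Lambda_0$ the arc is $(\lambda,T_0)^*$ quasi-hyperbolic for the \emph{scaled} flow, and then passes through a ``well-adapted'' rescaling sequence $\{c_i\}$ (from Gan's paper) turning the quasi-hyperbolic sequence of maps $\{\cT_j\}$ into a genuinely uniformly hyperbolic sequence $\{\tilde\cT_j\}$. It is only for this uniformly hyperbolic conjugated sequence that the summed distance estimate $\sum_i d(\tilde p_i,0_{\phi_{t_i}(x)})\leq L^*\,d(x,\phi_T(x))$ -- the quantitative form of your ``V-shape'' heuristic -- is actually established, via nested $F$-admissible manifolds. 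Your geometric-series idea for (d) is in the right spirit, but as stated it does not account for the fact that the raw sequence is only quasi-hyperbolic (contraction on average, not at each step), nor for the passage back and forth between the scaled and unscaled metrics, which is where the final constant $L$ in (d) picks up factors of $\max\|X\|$ and the Lipschitz constant of the cross-time functions.
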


Now we are ready to give the proof of Theorem~\ref{t.lowercontinuiation}, the argument is similar to~\cite{K80}. 

\begin{proof}[Proof of Theorem~\ref{t.lowercontinuiation}:]
By Theorem~\ref{t.shadowing}, take $\tilde{\mu}$ an ergodic decomposition of $\mu$ respect to $f$ and $\Lambda_0$
the compact set with positive $\tilde{\mu}$ measure, and for $\vep>0$, take $L,L^{'},\delta>0$. Replacing by a subset, 
we may assume the diameter of $\Lambda_0$ is small enough such that any two distinct periodic orbits obtained in
Theorem~\ref{t.shadowing} are homoclinic related to each other.
By Lemma~\ref{l.entropyforflow}, $h_{\mu}(f)=h_{\tilde{\mu}}(f)>0$.

The following is the same as the proof of~\cite{K80}[Theorem 4.3]. For
$\eta,l>0$ and $n\in \mathbb{N}$, there is a finite set $K_n=K(\eta,l)$ which satisfies the following properties 
(for some $n$ the set $K_n(\eta,l)$ may be empty).
\begin{itemize}

\item $K_n\subset \Lambda_0$;

\item if $x,y\in K_n$, then $d^f_n(x,y)>\frac{1}{l}$;

\item For every $x\in K_n$, there exists a number $m(x)$: $n\leq m(x)\leq (1+\eta)n$ such that
$f^{m(x)}(x)\in \Lambda_0$ and $d(x,f^m(x))<\frac{1}{4Ll}$;

\item For every $\eta>0$, $\lim_{l\rightarrow \infty}\lim_{n\rightarrow \infty}\frac{\ln \Card K_n(\eta,l)}{n}\geq h_{\tilde{\mu}}(f)-\eta$.
\end{itemize}

Let $n,l$ be sufficiently large such that $n>L^{'}$, $\frac{1}{4l}<\delta$ and $\Card K_n(\eta,l)>\exp^{n(h_{\tilde{\mu}}-\eta)}$.

Then for every $x\in K_n$, there is a pseudo orbit $(\phi_t(x))_{[0,m(x)]}$, which is shadowed by a periodic point $p_x$
with period at most $n(1+\vep)(1+\eta)$
and $d^f_n(x,p_x)\leq \frac{1}{4l}$. In particular, for $x\neq y \in K_n$, 
$$d^f_n(p_x,p_y)\geq d^f_n(x,y)-d^f_n(x,p_x)-d^f_n(y,p_y)\geq \frac{1}{2l}.$$
When $x$ and $y$ are both shadowed by the same periodic orbit $\cO$, 
write $p_x=\phi_{t_{y,x}}(p_y)$, then $|t_{y,x}|>\frac{1}{2lD}$ where $D=\max \{\|X(.)\|\}$.

As a corollary of the above argument, there are at most $[n(1+\vep)(1+\eta)2lD]$ 
points of $K_n$ are shadowed by the same periodic orbit. Hence, 
there are at least $\frac{e^{n(h_{\tilde{\mu}}-\eta)}}{[n(1+\vep)(1+\eta)2lD]}$ many different periodic points with period bounded by $n(1+\eta)$. Moreover, these periodic orbits are homoclinic related to each other near to $\Lambda_0$, and
form a Horse-shoe with topological entropy larger than or equal to 
$$\ln \frac{\exp^{n(h_{\tilde{\mu}}-\eta)}}{[n(1+\vep)(1+\eta)2lD]}/n(1+\eta),$$ 
which converges to $h_{\tilde{\mu}}(f)=h_\mu(f)$ when $\vep,\eta\to 0$.

The proof is complete.
\end{proof}

\begin{proof}[Proof of Corollary~\ref{c.homoclinicclass}:]

By Theorem~\ref{t.shadowing}, take $\tilde{\mu}$ an ergodic decomposition of $\mu$ respect to $f$ and $\Lambda_0$
the compact set with positive $\tilde{\mu}$ measure, and for $\vep>0$, take $L,L^{'},\delta>0$. Replacing by a subset, 
we may assume the diameter of $\Lambda_0$ is small enough such that for any two distinct periodic orbits obtained in
Theorem~\ref{t.shadowing} are homoclinic related to each other.

Take $x\in \Lambda_0$ a generic point of $\tilde{\mu}$. By the Birkhoff ergodic theorem, there are $0<n_1<\cdots$
such that $f^{n_i}(x)\in \Lambda_0$ for each $i\in \mathbb{N}$ and $\lim_id(x,f^{n_i}(x))\to 0$. By 
Theorem~\ref{t.shadowing}, each pseudo orbit $(x_t)_{[0,n_i]}$ is shadowed by a periodic point $p_{i}$. Moreover, 
these periodic orbits are homoclinic related to each other and $\lim_ip_i\to x$. This implies that $x$, and the
closure of its forward orbit which coincides to $\supp(\tilde{\mu})$, are contained in the homoclinic class. This
implies immediately that the chain
recurrent class of $\Lambda$ contains a non-trivial homoclinic class.

\end{proof}

\subsection{Proof of Corollaries~\ref{main.B},~\ref{main.C} and Theorem~\ref{main.D}\label{ss.rest}}

\begin{proof}[Proof of Corollary~\ref{main.B}]
By Theorem~\ref{main.A} and the variation principle, there is an $f$-invariant measure $\mu$ supported on $\Lambda$
with positive metric entropy. Then Corollary~\ref{main.B} follows from Corollary~\ref{c.homoclinicclass} and 
Theorem~\ref{t.isolated}.
\end{proof}

\begin{proof}[Proof of Corollary~\ref{main.C}:]

Let $\cR_1$ and $\cR_2$ be the residual subsets defined in Theorem~\ref{t.Lorenzlike} and 
Theorem~\ref{t.isolated} respectively. Then for any star flow $\phi_t\in \cR_1\cap \cR_2$, 
every non-trivial Lyapunov stable chain recurrent class of $\phi_t$ is Lorenz-like, and is 
in fact an attractor. In particular, $\phi_t$ has only finitely many attractors.

By the main result of~\cite{MPa02}, there is a $\C^1$ residual subset of flows $\cR_3$ that that for every 
$\phi_t\in \cR_3$, the points converging to the Lyapunov stable chain recurrent calsses of $\phi_t$ is everywhere dense.
Take $\cR=\cR_1\cap \cR_2\cap \cR_3$. Combining the discussion above, for every flow $\phi_t\in \cR$, 
it has only finitely many attractors, the union of the basins of these attractors are open and dense 
in the ambient manifold. We conclude the proof.
\end{proof}

\begin{proof}[Proof of Theorem~\ref{main.D}]

By Proposition~\ref{p.lorenzexpansive}, there are
attracting neighborhood $U$ of $\Lambda$, a $\C^1$ neighborhood $\cU$ of $\phi_t$ and $\delta>0$, such that
the flows in $\cU$ are all $\delta$-entropy expansive. By Lemma~\ref{l.flowrobustranstive}, $h_{top}(.|_{U})$ 
varies upper semi-continuous respect the flows contained in $\cU$.

Choose $U$ sufficiently small, such that for any flow $\phi^{'}_t\in \cU$, its maximal invariant set 
contained in $U$ is sectional-hyperbolic. Then by Theorem~\ref{t.Flowboundary}, 
$h_{top}(\phi_t^{'}|_{U})>0$. Applying Lemma~\ref{l.flowexpansive}, there is a maximal measure $\mu^{'}$ for the map $\phi^{'}|_U$,
which is clearly non-trivial. 
Then $\supp(\mu^{'})$ admits a sectional-hyperbolic splitting $E^s\oplus F^{cu}$, which implies that
$\mu^{'}$ is a hyperbolic measure with index $dim(E^s)$.

Apply Theorem~\ref{t.lowercontinuiation}, for any $\vep>0$, there is a horse-shoe $\Lambda_\vep$ of $Y$ contained in $U$
with topological entropy larger than $h_{top}(\phi_t^{'}|_{U})-\vep$. Because the topological entropy of a horse-shoe varies continuously, 
we conclude the proof of lower semi-continuation
of the topological entropy and complete the proof of Theorem~\ref{main.D}.
\end{proof}

\appendix

\section{Proof of Theorem~\ref{t.shadowing}\label{s.Pesin}}

For $x\in M\setminus \Sing(X)$ and $v\in T_xM$, 
the orthogonal projection of $\Phi_t(v)$ on $\cN_{\phi_t(x)}$ along the flow direction, denoted by $\psi_t: \cN_x\to \cN_{x_t}$, was defined in
Subsection~\ref{ss.ergodictheory}: $$\psi_t(v)=\pi_{\phi_t(x)}(\Phi_t(v))=\Phi_t(v)-\frac{<\Phi_t(v),X(\phi_t(x))>}{\|X(\phi_t(x))\|^2}X(\phi_t(x)).$$
In this section, we need another flow $\psi^*_t: \cN \rightarrow \cN$, which is called {\it scaled linear Poincar\'e flow}:
$$\psi^*_t(v)=\frac{\|X(x)\|}{\|X(\phi_t(x))\|}\psi_t(v)=\frac{\psi_t(v)}{\|\Phi_t|_{<X(x)>}\|},$$

\begin{lemma}\label{l.twococycles}

$\psi^*_t$ is a bounded cocycle on $\cN_\Lambda$ in the following sense: for any $\tau>0$, there is
$C_\tau>0$ such that for any $t\in[-\tau,\tau]$,
$$\|\psi^*_t\|\leq C_\tau.$$

Moreover, let $\mu$ be a non-trivial ergodic measure supported on $\Lambda$, then the two cocycles
$\psi_t$ and $\psi^*_t$ have the same Lyapunov exponents and Oseledets splitting.
\end{lemma}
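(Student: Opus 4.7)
The plan is to handle the two assertions separately, since each rests on a different observation.

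For the boundedness, I would start from the defining formula
$$\psi^*_t(v)=\frac{\|X(x)\|}{\|X(\phi_t(x))\|}\,\psi_t(v),\qquad \psi_t(v)=\pi_{\phi_t(x)}(\Phi_t(v)).$$
Since $\pi_{\phi_t(x)}$ is an orthogonal projection, $\|\psi_t(v)\|\le \|\Phi_t(v)\|\le \|\Phi_t\|_{op}\|v\|$. The key trick for the scaling factor is to flip the role of $t$: since $X(x)=\Phi_{-t}(X(\phi_t(x)))$, one obtains
$$\frac{\|X(x)\|}{\|X(\phi_t(x))\|}\le \|\Phi_{-t}\|_{op}.$$
Combining the two bounds gives $\|\psi^*_t\|\le \|\Phi_{-t}\|_{op}\,\|\Phi_t\|_{op}$. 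Both factors are continuous on the compact set $M\times[-\tau,\tau]$, so they are uniformly bounded, yielding the required constant $C_\tau$.

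For the comparison of Lyapunov exponents, I would view the identity $\psi^*_t(v)=c(x,t)\,\psi_t(v)$ as saying the two cocycles differ by multiplication by the scalar $c(x,t)=\|X(x)\|/\|X(\phi_t(x))\|$, and show that $\frac{1}{t}\log c(x,t)\to 0$ along $\mu$-typical orbits. Since $\log c(x,t)=\log\|X(x)\|-\log\|X(\phi_t(x))\|$ and the first summand is constant in $t$, everything reduces to the classical zero-exponent-in-the-flow-direction statement
$$\lim_{t\to\pm\infty}\frac{1}{t}\log\|X(\phi_t(x))\|=0 \quad \mu\text{-a.s.}$$
To prove this, I would apply the Oseledets theorem to the tangent cocycle $\Phi_t$ (applicable because $\log^{+}\|\Phi_{\pm 1}\|$ is bounded, hence in $L^1(\mu)$). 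The flow direction $\langle X(x)\rangle$ is $\Phi_t$-invariant, so its exponent $\lambda_X(x)$ is defined $\mu$-a.s., $\phi_t$-invariant, and, by ergodicity, constant. It cannot be positive because $\|X\|$ is bounded above on $M$; it cannot be negative because $\mu$ is non-trivial, so by Poincar\'e recurrence $\phi_t(x)$ returns to a neighborhood of $x$, forcing $\|X(\phi_t(x))\|$ to stay close to $\|X(x)\|>0$ infinitely often. Therefore $\lambda_X\equiv 0$.

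Taking logarithms of the scalar relation and dividing by $t$,
$$\frac{1}{t}\log\|\psi^*_t(v)\|=\frac{1}{t}\log\|\psi_t(v)\|+\frac{1}{t}\log c(x,t),$$
so the two cocycles have the same Lyapunov spectrum on $\cN_x$ for $\mu$-a.e.\ $x$. Since $c(x,t)$ is a positive scalar (independent of $v$), any subspace $V\subset \cN_x$ is $\psi^*_t$-invariant iff it is $\psi_t$-invariant, and the exponent on each Oseledets block is unchanged; hence the Oseledets splittings coincide. The step I expect to need the most care is the zero-exponent claim, since $\log\|X\|$ need not belong to $L^1(\mu)$ when the support of $\mu$ meets $\Sing(X)$; the Poincar\'e-recurrence argument above is the standard way around this and I would want to write it out in detail.
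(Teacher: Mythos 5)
Your proof is correct and follows the same approach as the paper's; the paper's own proof is only two sentences, asserting without justification both the norm bound and the zero-exponent fact $\lim_{t\to\pm\infty}\frac{1}{t}\log\|\Phi_t(X(x))\|=0$ for $\mu$-a.e.\ $x$. You fill in the details the paper omits, and the Oseledets-plus-Poincar\'e-recurrence argument you give for the zero-exponent claim is exactly the right way to handle the subtlety you flag, namely that $\log\|X\|$ may fail to be $\mu$-integrable near $\Sing(X)$ so one cannot simply invoke Birkhoff on it as a coboundary.
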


\begin{proof}

The uniformly boundness comes from the boundness of $\Phi_t$ for $t\in [-\tau,\tau]$.

For $\mu$ almost every $x$, we have $\lim \frac{\log\|\Phi_t(X(x))\|}{t}=0$. This implies that both
$\psi_t$ and $\psi^*_t$ have the same Lyapunov exponents and Oseledets splitting.

\end{proof}

\subsection{Pesin block}

\begin{lemma}\label{l.dominatedsplittinginvariant}

 $X|_{\Lambda\setminus \Sing(X_t)}\subset F$.
\end{lemma}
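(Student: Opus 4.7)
The plan is to combine the hyperbolicity and index hypotheses on $\mu$ with the Oseledets decomposition to force the flow direction $X(x)$ to lie in the upper bundle $F(x)$ at $\mu$-generic $x$, and then to propagate this by continuity to all of $\supp(\mu)\setminus\Sing(X)$. Since the remainder of the appendix only samples points from $\supp(\mu)$ (the Pesin block $\Lambda_0$ sits inside $\supp(\mu)$), there is no loss in restricting $\Lambda$ to $\supp(\mu)$ from the outset: the restricted invariant set still carries the dominated splitting $E\oplus F$, and the hyperbolic ergodic measure of index $\dim(E)$ is unchanged.

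First I would pass from $\mu$ to an $f$-ergodic component $\tilde\mu$. By the argument used in Lemma~\ref{l.entropyforflow}, $\mu=\int_0^1(\phi_t)_\ast\tilde\mu\,dt$ and all translates $(\phi_t)_\ast\tilde\mu$ share the same Lyapunov spectrum. By Theorem~\ref{t.exponentsdiffandflow}, the Lyapunov spectrum of $\tilde\mu$ with respect to $f$ differs from that of $\mu$ with respect to $\phi_t$ only by the inclusion of a single zero exponent corresponding to the flow direction $X$. Since $\mu$ has index $\dim(E)$ for $\phi_t$, the component $\tilde\mu$ has exactly $\dim(E)$ negative, one zero, and $\dim(F)-1$ positive Lyapunov exponents for $f$.

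Second I would locate the zero exponent inside the dominated splitting. At $\tilde\mu$-a.e.\ $x$ the $f$-Oseledets decomposition of $T_xM$ refines $E(x)\oplus F(x)$, and by the domination condition the $\dim(E)$ smallest exponents are realized on $E(x)$ while the $\dim(F)$ largest are realized on $F(x)$. Assume for contradiction that $X(x)\in E(x)$; then the zero exponent occupies one of the $\dim(E)$ slots inside $E(x)$, so the remaining $\dim(E)-1$ $E$-exponents are strictly negative while all $\dim(F)$ $F$-exponents are strictly positive by the domination gap. Counting negative exponents gives index $\dim(E)-1$ for $\tilde\mu$, hence for $\mu$, contradicting the hypothesis. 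Therefore $X(x)\in F(x)$ at $\mu$-a.e.\ $x$.

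Finally, to upgrade to every regular point of $\supp(\mu)$, I would invoke continuity. The map $x\mapsto\pi^E_x(X(x))$, with $\pi^E_x$ the continuous projection onto $E(x)$ parallel to $F(x)$, is continuous on $\Lambda\setminus\Sing(X)$; it vanishes on a set of full $\mu$-measure, hence on a dense subset of $\supp(\mu)\setminus\Sing(X)$, so by closedness it vanishes on all of $\supp(\mu)\setminus\Sing(X)$. The main obstacle is really bookkeeping: one has to keep careful track of the translation between the $\phi_t$-ergodic measure $\mu$ and its $f$-ergodic components, and of where the zero exponent from the flow direction lands inside the refinement of $E\oplus F$ by the $f$-Oseledets splitting. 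Once that index count is performed, the geometric conclusion is automatic, and reducing $\Lambda$ to $\supp(\mu)$ at the start makes the pointwise statement of the lemma match the measure-theoretic content.
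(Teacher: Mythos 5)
Your proposal is correct, and its core coincides with the paper's: both use Theorem~\ref{t.exponentsdiffandflow} together with the fact that at a generic point the $f$-Oseledets splitting refines the dominated splitting $E\oplus F$, and both rule out $X(x)\in E(x)$ by counting exponents against the hypothesis that the index equals $\dim(E)$. Where you genuinely differ is the globalization step: the paper first applies Lemma~\ref{l.bothdominatedsplitting} to $\supp(\mu)$ (transitive by the Birkhoff theorem) to get the dichotomy $X\subset E$ or $X\subset F$ on all of $\supp(\mu)\setminus\Sing(X)$, and only then uses the exponent count to decide which alternative holds; you instead argue at a $\mu$-generic point and spread the conclusion over $\supp(\mu)\setminus\Sing(X)$ by continuity of the projection $\pi^E_x$ onto $E$ along $F$, a slightly more elementary route that avoids the transitivity dichotomy altogether. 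Your restriction of the statement to $\supp(\mu)$ is consistent with how the lemma is actually quoted later (Lemma~\ref{l.dominatedsplittingonsupport}). One point you should make explicit: in your contradiction step the negation of ``$X(x)\in E(x)$'' is not by itself ``$X(x)\in F(x)$''; you need that at a regular point $X(x)$ spans the zero-exponent Oseledets subspace of $f$, which is one-dimensional because $\mu$ is hyperbolic, and which by the refinement lies entirely in $E(x)$ or in $F(x)$. This is exactly the content of Theorem~\ref{t.exponentsdiffandflow} and is implicit in your closing remarks, so it is a presentational omission rather than a gap, but it is the hinge on which the dichotomy turns and deserves a sentence.
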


\begin{proof}

Because $\phi_t|_{\supp(\mu)}$ is transitive (Birkhoff ergodic theorem), by Lemma~\ref{l.bothdominatedsplitting},
we have either $X|_{\Lambda\setminus \Sing(X_t)}\subset E$ or $X|_{\Lambda\setminus \Sing(X_t)}\subset F$.

Since $\mu$ is hyperbolic with index $dim(E)$, by Theorem~\ref{t.exponentsdiffandflow}, $f$ has
$dim(E)$ number of negative exponents and a vanishing center exponent which corresponds to the flow direciton.
The Lyapunov exponents restricted on $E(x)$ is clearly smaller than the exponents on $F(x)$. Hence, $E(x)$ coincides to
the subspace spanned by the sunbundles in the Oseledets splitting corresponding to the negative exponents; and
$F(x)$ is the subspace spanned by the flow direction and the sunbundles in the Oseledets splitting corresponding to 
the positive exponents.
This implies that $X(x)\in F(x)$, and by the transitivity, we conclude that $X|_{\Lambda\setminus \Sing(X_t)}\subset F$.

\end{proof}

\begin{lemma}\label{l.dominatedsplittingonsupport}
For both of $\psi_t$ and $\psi^*_t$, 
$\pi(E)\oplus \pi(F)$ is a dominated splitting on $\cN_{\Lambda\setminus \Sing(X_t)}$,
which is also the Oseledets splitting of $\mu$ corresponding to the negative exponents and positive 
exponents.
\end{lemma}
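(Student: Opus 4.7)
The plan is to verify in turn the three substantive assertions buried in the statement: that $\pi(E)\oplus \pi(F)$ is a well-defined direct sum decomposition of $\cN$ over non-singular points of $\Lambda$, that this decomposition is invariant and dominated under both $\psi_t$ and $\psi^*_t$, and that it coincides with the Oseledets splitting of $\mu$ sorted by sign of exponents. For the direct sum itself the decisive input is Lemma~\ref{l.dominatedsplittinginvariant}: because $X(x)\in F(x)$ on $\Lambda\setminus\Sing(X)$, the kernel $\langle X(x)\rangle$ of $\pi_x$ sits inside $F(x)$, so $\pi|_{E(x)}$ is injective and the dimensions add to $\dim \cN_x$; and if $\pi(u)=\pi(w)$ with $u\in E$ and $w\in F$, then $u-w\in\langle X\rangle\subset F$ forces $u\in E\cap F=\{0\}$.

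For invariance the computation is short: for $v=\pi_x(u)$ with $u\in E(x)$,
\[
\psi_t(v)=\pi_{x_t}(\Phi_t v)=\pi_{x_t}(\Phi_t u-\alpha X(x_t))=\pi_{x_t}(\Phi_t u),
\]
using $\Phi_t X(x)=X(x_t)\in \ker\pi_{x_t}$. Since $\Phi_t u\in E(x_t)$, the left-hand side lies in $\pi(E(x_t))$, and the $F$-case is identical. The scaled flow $\psi^*_t$ is a positive scalar multiple of $\psi_t$, hence preserves the same subspaces.

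For domination I would use the uniform angle bounds that come for free with the dominated splitting of $\Phi_t$ on $\Lambda$. On the $E$-side, $\pi|_{E(x)}$ has bounded distortion (the inverse bound is controlled by the angle between $E$ and $\langle X\rangle\subset F$, which is uniformly bounded below), giving $\|\psi_t|_{\pi E}\|\leq C\|\Phi_t|_E\|$. On the $F$-side, $\pi(F(x))=F(x)\cap X(x)^\perp$ is naturally identified with the quotient $F(x)/\langle X(x)\rangle$, and under this identification $\psi_t|_{\pi F}$ is the map induced by $\Phi_t|_F$ on the quotient by its invariant line-subbundle $\langle X\rangle$. Combining the bounded distortion with $\|\Phi_t|_E\|\leq C\lambda^t m(\Phi_t|_F)$ should yield the desired inequality $\|\psi_t|_{\pi E}\|\leq C'\lambda^t m(\psi_t|_{\pi F})$; and since the scaling factor $\|X(x)\|/\|X(x_t)\|$ cancels in the ratio $\|\psi^*_t|_{\pi E}\|/m(\psi^*_t|_{\pi F})$, the same inequality passes to $\psi^*_t$.

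Finally, to identify $\pi(E)\oplus\pi(F)$ with the sign-decomposition of the Oseledets splitting, I would invoke Theorem~\ref{t.exponentsdiffandflow} together with Lemma~\ref{l.twococycles}: the Oseledets splitting of $\mu$ for $\psi_t$ and $\psi^*_t$ is obtained from that of $f$ by projecting each subspace with $\pi$ and discarding the zero-exponent flow direction. Hyperbolicity of index $\dim E$ gives exactly $\dim E$ strictly negative and $\dim F-1$ strictly positive exponents for the flow; the domination of $E\oplus F$ forces every negative Oseledets subspace to lie inside $E$ and every positive one (together with $X$) to lie inside $F$, and a dimension count nails the identification. The main obstacle I anticipate is the third step: showing that $m(\psi_t|_{\pi F})$ cannot collapse relative to $m(\Phi_t|_F)$. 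A priori a vector $v\in G(x)=F(x)\cap X(x)^\perp$ could have $\Phi_t v$ almost parallel to $X(x_t)$, wiping out most of the norm under $\pi_{x_t}$; ruling this out requires exploiting the uniform transversality coming from the dominated splitting (between $E$, $F$ and $X$) together with the help of the hyperbolicity of $\mu$.
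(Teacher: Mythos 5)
Your decomposition into (direct sum) $\to$ (invariance) $\to$ (domination) $\to$ (Oseledets identification) is reasonable, and steps one, two, and four are essentially sound and match the paper's logic: the injectivity of $\pi|_E$ follows from Lemma~\ref{l.dominatedsplittinginvariant}, the invariance computation is correct, and the Oseledets identification is exactly the combination of Theorem~\ref{t.exponentsdiffandflow}, Lemma~\ref{l.twococycles}, and the index hypothesis that the paper uses. Your $E$-side distortion bound is also the right ingredient: since $X\subset F$ and the angle between $E$ and $F$ is uniformly bounded below, $\|\psi_t(\pi_x u)\|$ and $\|\Phi_t u\|$ are comparable up to a uniform constant, in both time directions.

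The genuine gap is the one you flag yourself at the end: in your \emph{forward-time} argument you need a lower bound $m(\psi_t|_{\pi F})\gtrsim m(\Phi_t|_F)$, and nothing rules out $\Phi_t v$ drifting into near-alignment with $X(x_t)$ so that the normal projection wipes out most of $\|\Phi_t v\|$. Neither the uniform transversality (which controls angles between $E$, $F$ and $X$, but not the angle between $\Phi_t v$ and $X(x_t)$ inside $F(x_t)$) nor the hyperbolicity of $\mu$ (an asymptotic, measure-theoretic statement, not a uniform pointwise bound) closes this. Note also that in this lemma sectional-hyperbolicity is \emph{not} assumed, so you cannot appeal to area expansion in $F$ to prevent collapse.

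The paper sidesteps the obstruction entirely by running the estimate in \emph{backward} time. For $t<0$, what one must show is $m(\psi_t|_{\pi E})/\|\psi_t|_{\pi F}\|\geq C e^{-c_0 t}$, and now the two ingredients go the \emph{favorable} way: on the $E$-side the same bounded-distortion estimate gives a lower bound on $\|\psi_t(\pi u)\|$ in terms of $\|\Phi_t u\|$, while on the $F$-side one only needs an \emph{upper} bound, and $\|\psi_t(v)\|=\|\pi_{x_t}(\Phi_t v)\|\leq\|\Phi_t v\|$ is automatic because orthogonal projection contracts. Dividing and invoking backward domination of $E\oplus F$ for $\Phi_t$ finishes it; the collapse that worried you simply does not appear on the side of the inequality where it could do damage. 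If you rewrite your domination step to estimate in backward time, the rest of your argument goes through as stated.
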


\begin{proof}

Let us first prove that $\pi(E)\oplus \pi(F)$ is a dominated splitting for $\psi_t$. The proof of $\psi^*_t$ follows
immediately from the relation between $\psi_t$ and $\psi^*_t$.

Consider $\psi_t$, $t<0$. Because $E\oplus F$ is a dominated splitting for the flow, there are 
$C,c_0>0$ such that for any $t<0$, $u$ and $v$ unit vectors contained in $E_x$ and $F_x$ for 
$x\in \Lambda\setminus \Sing(X_t)$, we have
$$\frac{\|\Phi_t(u)\|}{\|\Phi_t(v)\|}>C \exp^{-c_0t}.$$

Because $E\oplus F$ is a dominated splitting for flow, the angles between these two subbundles are uniformly 
bounded from below. By Lemma~\ref{l.dominatedsplittinginvariant}, $X|_{\supp(\mu)\setminus \Sing(X_t)}\subset F$,
for any $u\neq 0\in E$, there is $C^*>0$ such that 
$$\frac{1}{C^*}<\frac{\|\Phi_t(u)\|}{\|\psi_t(u)\|}<\frac{1}{C^*}.$$

From the definition of $\psi_t$, which is a projection, we have $\|\psi_t(v)\|\leq \|\Phi_t(v)\|$. Hence,
$$ \frac{\|\psi_t(u)\|}{\|\psi_t(v)\|}>C\exp^{-c_0t}/C^*,$$
which means that, $\pi(E)\oplus \pi(F)$ is a dominated splitting for $\psi_t$.

Because $\mu$ has index $dim(E)$ and $\pi(E)\oplus \pi(F)$ 
is a dominated splitting for $\psi_t$, by Theorem~\ref{t.exponentsdiffandflow}, we conclude that $\pi(E)\oplus \pi(F)$ is the Oseledets splitting corresponding to the negative and positive Lyapunov exponents of $\mu$ for both $\psi_t$ and $\psi^*_t$.

\end{proof}

\begin{definition}\label{d.quasihyperbolic}
The orbit arc $\phi_{[0,T]}(x)$ is called
$(\lambda,T_0)^*$ quasi hyperbolic with respect to a direct sum splitting $\cN_x = E(x)\oplus F(x)$ and the
scaled linear Poincar\'e flow $\psi^*_t$ if there exists $0<\lambda<1$ and a partition
$$0=t_0<t_1<\dots<t_l=T,\;\;\; \text{where}\;\; t_{i+1}-t_i\in [T_0, 2T_0]$$
such that for $k=0,1,\dots, l$, we have
$$\prod_{i=0}^{k-1}\|\psi^*_{t_{i+1}-t_i}|_{\psi_{t_i}(E(x))}\|\leq \lambda^k;\;\;  \prod_{i=k}^{l-1}m(\psi^*_{t_{i+1}-t_i}|_{\psi_{t_i}(F(x))})\geq \lambda^{k-l},$$
and
$$\frac{\|\psi^*_{t_{k+1}-t_k}|_{\psi_{t_k}(E(x))}\|}{m(\psi^*_{t_{k+1}-t_k}|_{\psi_{t_k}(F(x))})}\leq \lambda^2.$$

\end{definition}

Now we state a $\C^1$ version of Pesin block.

\begin{lemma}\label{l.goodset}
There are $L^{'}, \eta, T_0>0$ and a positive $\tilde{\mu}_1$ measure subset $\Lambda_0\subset \Lambda\setminus \Sing(X)$, such that for any $x, f^n(x)\in \Lambda_0$ with $n>L^{'}$, $\phi_{[0,n]}(x)$ is $(\eta,T_0)^*$ quasi hyperbolic with respect to a direct sum splitting $\cN_x = E(x)\oplus F(x)$ and the scaled linear Poincar\'e flow $\psi^*_t$.
\end{lemma}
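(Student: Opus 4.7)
The plan is to build $\Lambda_0$ as a Pesin--Pliss block for the scaled linear Poincar\'e flow $\psi^*_t$, from which the required partition will be read off directly. By Lemma~\ref{l.dominatedsplittinginvariant} the flow direction lies in $F$ on $\supp(\mu)\setminus\Sing(X)$, and by Lemma~\ref{l.dominatedsplittingonsupport} the projected splitting $\pi(E)\oplus\pi(F)$ is simultaneously a dominated splitting for $\psi^*_t$ and the Oseledets splitting of $\mu$; since $\mu$ is hyperbolic of index $\dim E$, every Lyapunov exponent on $\pi(E)$ is negative and every one on $\pi(F)$ is positive. Denote by $\lambda_E<0$ the largest exponent on $\pi(E)$ and by $\lambda_F>0$ the smallest on $\pi(F)$.

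I then pick a small $\delta>0$, then $\eta\in(0,1)$ and $T_0>0$ large enough that two compatibility conditions hold: first, $T_0(\lambda_E+2\delta)<\log\eta<0<-\log\eta<T_0(\lambda_F-2\delta)$; second, the domination of $\pi(E)\oplus\pi(F)$ for $\psi^*_t$ yields $\|\psi^*_t|_{\pi(E)(y)}\|\,/\,m(\psi^*_t|_{\pi(F)(y)})\le\eta^2$ for every $y\in\supp(\mu)\setminus\Sing(X)$ and every $t\in[T_0,2T_0]$. The Oseledets theorem for $\psi^*_t$ then ensures that for $\tilde\mu$-a.e.\ $y$ the averages $\frac{1}{n}\log\|\psi^*_{nT_0}|_{\pi(E)(y)}\|$ and $\frac{1}{n}\log m(\psi^*_{nT_0}|_{\pi(F)(y)})$ converge, both forward and backward in $n$, to $T_0\lambda_E$ and $T_0\lambda_F$ respectively.

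Now let $\Lambda_0\subset\supp(\mu)\setminus\Sing(X)$ be the set of points $x$ at which, along the discrete orbit $\{f^{iT_0}(x)\}_{i\in\mathbb{Z}}$, the forward partial products $\prod_{i=0}^{k-1}\|\psi^*_{T_0}|_{\pi(E)(f^{iT_0}(x))}\|$ are bounded by $\eta^k$ for every $k\ge 1$, and the backward partial products $\prod_{i=1}^{j} m(\psi^*_{T_0}|_{\pi(F)(f^{-iT_0}(x))})$ are bounded below by $\eta^{-j}$ for every $j\ge 1$. Applying Pliss's lemma forward to the sequence $-\log\|\psi^*_{T_0}|_{\pi(E)}\circ f^{iT_0}\|$ and backward to $\log m(\psi^*_{T_0}|_{\pi(F)})\circ f^{-iT_0}$, together with ergodicity of $\tilde\mu$, one obtains $\tilde\mu(\Lambda_0)>0$; pass then to a compact subset of positive measure. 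For any $x,f^n(x)\in\Lambda_0$ with $n\ge L'$ (for a suitable threshold $L'\ge 2T_0$), partition $[0,n]$ into $l=\lfloor n/T_0\rfloor$ pieces, each of length in $[T_0,2T_0]$, by absorbing the remainder into the final piece. The forward partial-product condition on $\pi(E)$ follows from $x\in\Lambda_0$, the backward tail-product condition on $\pi(F)$ follows from $f^n(x)\in\Lambda_0$ translated back along the orbit of $x$, and condition~(iii) of Definition~\ref{d.quasihyperbolic} is the domination estimate imposed on $T_0$.

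The principal difficulty is reconciling the product of block norms, which governs the partial-product bounds in Definition~\ref{d.quasihyperbolic}, with the composed norm $\|\psi^*_{nT_0}|_{\pi(E)(x)}\|$ whose averaged behavior is what Oseledets controls; these agree on rank-one bundles but differ in higher rank by a submultiplicative error. Thanks to the dominated splitting between $\pi(E)$ and $\pi(F)$, this error is $O(1)$ uniformly (standard bounded-distortion), and it is absorbed by the tolerance $\delta$ once $T_0$ is taken large. A secondary subtlety is that $\psi^*_t$ is only defined off $\Sing(X)$: non-triviality of $\mu$ and compactness of $\Lambda_0$ yield a uniform lower bound on $\|X\|$ along $\Lambda_0$, which keeps the rescaling factor in $\psi^*_t$ under control throughout.
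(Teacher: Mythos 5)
Your Pliss-block construction is a genuinely different route than the paper's. The paper does not build a Pliss block at all: it instead considers the set $\Lambda'$ of points whose Birkhoff threshold $n_x$ (the time after which the forward average of $\log\|\psi^*_{N_0}|_E\|$ and the backward average of $\log\|\psi^*_{-N_0}|_F\|$ both drop below a fixed $a<0$) is at most $N_1$, passes to a compact positive-measure subset, and then verifies quasi-hyperbolicity by discretizing any admissible partition $\{n_i\}$ against multiples of $N_0$ and controlling the averaged cocycle norm. Your approach has a real advantage: a Pliss block gives the partial-product inequalities $\prod_{i=0}^{k-1}\|\psi^*_{T_0}|_{\pi(E)}\circ f^{iT_0}\|\le\eta^{k}$ for \emph{every} $k\ge1$, which is precisely what Definition~\ref{d.quasihyperbolic} asks for, whereas the threshold set only controls Birkhoff averages past the threshold $N_1$. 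So the two constructions are not equivalent.

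However, there is a concrete gap in how you justify that the Pliss block has positive measure. Pliss's lemma needs the one-step integral $\int\log\|\psi^*_{T_0}|_{\pi(E)}\|\,d\tilde\mu_1$ to be negative (strictly below $\log\eta<0$). You try to derive this from the Oseledets statement for the \emph{composed} norm $\|\psi^*_{nT_0}|_{\pi(E)}\|$ together with a claim that the discrepancy between the composed norm and the product of block norms is ``$O(1)$ uniformly (standard bounded-distortion)'' by domination. That claim is false when $\dim\pi(E)>1$: the dominated splitting controls the gap between $\pi(E)$ and $\pi(F)$, not the internal distortion of the cocycle restricted to $\pi(E)$, so $\sum_i\log\|\psi^*_{T_0}|_{\pi(E)}\circ f^{iT_0}\|-\log\|\psi^*_{nT_0}|_{\pi(E)}\|$ can grow linearly. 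The correct tool is Kingman/Fekete, exactly what the paper invokes under the name ``subadditive ergodic theorem'': the sequence $n\mapsto\int\log\|\psi^*_n|_{\pi(E)}\|\,d\tilde\mu_1$ is subadditive, so $\frac1n\int\log\|\psi^*_n|_{\pi(E)}\|\,d\tilde\mu_1$ decreases to the top exponent $\lambda_E<0$, and one simply takes $T_0$ large enough that this average is below $\log\eta$. You should replace the bounded-distortion sentence with this argument; once done, the Pliss step is sound.

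Two smaller points you should address. First, Birkhoff and Pliss for the time-$T_0$ map require ergodicity of the measure under $f^{T_0}$, not just under $f$; the paper handles this by passing to an ergodic component $\tilde\mu_1$ of $\tilde\mu$ for $f^{N_0}$, which you need as well (your statement ``ergodicity of $\tilde\mu$'' is for $f$ only, and the lemma's $\tilde\mu_1$ is precisely that ergodic component). Second, absorbing the remainder $n-\lfloor n/T_0\rfloor T_0$ into the final block changes the last factor from $\|\psi^*_{T_0}\|$ to $\|\psi^*_{T_0+r}\|$; this introduces a uniformly bounded multiplicative error (Lemma~\ref{l.twococycles}), which should be absorbed by loosening $\eta$ slightly or by requiring $L'$ large, and is worth a sentence.
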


\begin{proof}

By Lemma~\ref{l.dominatedsplittingonsupport} and Theorem~\ref{t.exponentsdiffandflow},
on $\tilde{\mu}$ almost every $x$, $\pi(E(x))\oplus \pi(F(x))$ is the Oseledets splitting of $f$ 
for $\psi^*_t$ corresponding to the negative and positive exponents.
Then by subadditive ergodic theorem, 
there is $a<0$ such that
$$\lim_{t\to \infty}\frac{1}{t}\int \log \|\psi_t^*|_{E}\|d\tilde{\mu}<a \;\; \text{and}\;\; \lim_{t\to -\infty}\frac{1}{|t|}\int \log \|\psi_t^*|_{F}\|d\tilde{\mu}<a.$$

Taking $N_0$ sufficiently large, we have 
$$\frac{1}{N_0}\int \log \|\psi_{N_0}^*|_{E}\|d\tilde{\mu}<a \;\; \text{and}\;\; \frac{1}{N_0}\int \log \|\psi_{-N_0}^*|_{F}\|d\tilde{\mu}<a.$$

Consider the ergodic decomposition of $\tilde{\mu}$ for $f^{N_0}$:
$$\tilde{\mu}=\frac{1}{k_0}\big(\tilde{\mu}_1+\dots+\tilde{\mu}_{k_0}\big).$$
By changing the order, we may assume that:
$$\frac{1}{N_0}\int \log \|\psi_{N_0}^*|_{E}\|d\tilde{\mu}_1<a \;\; \text{and}\;\; \frac{1}{N_0}\int \log \|\psi_{-N_0}^*|_{F}\|d\tilde{\mu}_1<a.$$

Applying Birkhoff ergodic theorem on $f^{N_0}$, for $\tilde{\mu}_1$ almost every $x$, 
$$\lim \frac{1}{mN_0}\sum_{i=0}^{m-1}\log\|\psi_{N_0}^*|_{E(f^{iN_0}(x))}\|<a$$ 
and
$$\lim \frac{1}{mN_0}\sum_{i=0}^{m-1}\log\|\psi_{-N_0}^*|_{F(f^{-iN_0}(x))}\|<a.$$
There is $n_x>0$ such that for any $m>n_x$, 
$$\frac{1}{mN_0}\sum_{i=0}^{m-1}\log\|\psi_{N_0}^*|_{E(f^{iN_0}(x))}\|<a\;\; \text{and}\;\; \frac{1}{mN_0}\sum_{i=0}^{m-1}\log\|\psi_{-N_0}^*|_{F(f^{-iN_0}(x))}\|<a.$$

Choose $N_1$ such that the set $\Lambda^{'}=\{x; n(x)<N_1\}$ has positive $\tilde{\mu}_1$ measure. Let 
$\Lambda_0\subset \Lambda^{'}\setminus \Sing(X)$ be compact and has positive $\tilde{\mu}_1$ measure. It follows
immediately that $\tilde{\mu}(\Lambda_0)>0$. By Lemma~\ref{l.twococycles}, we may define
$$K=\max_{|t|\leq N_0,y\in \Lambda\setminus \Sing(X)}\{\sup\{\psi_t^*|_{E(y)}\}; \sup\{\psi_t^*|_{F(y)}\}\}.$$
Choose $N_2$ sufficiently large and $b<0$ such that 
$$\frac{N_2+N_0}{N_0}a+3K<b<0.$$

We claim that for any sequence $n_1<n_2\cdots<n_l$ satisfying $N_2\leq n_{i+1}-n_i\leq N_2+N_0$ for $0\leq i \leq l-1$: 
$$\frac{1}{l}\sum_{i=0}^{l-1}\log \|\psi^*_{n_{i+1}-n_i}|_{E(f^{n_i}(x))}\|<b.$$
A similar formula also holds for $F$ bundle. Let $L^{'}>0$ be sufficiently large, such that for any $n>L^{'}$, 
there always exists a sequence $n_1<n_2\cdots<n_l$ satisfying $N_2\leq n_{i+1}-n_i\leq N_2+N_0$ for each $0\leq i \leq l-1$.
Then by the above claim, we conclude the proof of this lemma.

It remains to prove the claim.
For each $0\leq i<l$, denote 
$$k_i=[\frac{n_{i+1}}{N_0}]-[\frac{n_{i}}{N_0}]-1, n^{'}_i=([\frac{n_i}{N_0}]+1)N_0\;\; \text{and}\;\; n^*_{i+1}=[\frac{n_{i+1}}{N_0}]N_0.$$

Then $n^*_{i}\leq n_i \leq n^{'}_{i}$, $n^*_{i+1}-n^{'}_i=k_i N_0$ and
$$\psi^*_{n_{i+1}-n_{i}}|_{E(f^{n_i}(x))}= \psi^*_{n_{i+1}-n^*_{i+1}}|_{E(f^{n^*_{i+1}}(x))}\circ \psi^*_{k_iN_0}|_{E(f^{n_i^{'}}(x))}\circ\psi^*_{n^{'}_i-n_i}|_{E(f^{n_i}(x))}$$
Observe that $n_{i+1}-n^*_{i+1}\leq N_0$ and $n^{'}_i-n_i\leq N_0$, which imply in particular that 
\begin{equation*}
\begin{split}
\log\|\psi^*_{n_{i+1}-n_i}|_{E(f^{n_i}(x))}\|& \leq 2K+\log \|\psi^*_{k_iN_0}|_{E(f^{n_i^{'}}(x))}\| \\
& \leq 3K+ \log\|\psi^*_{n^{'}_{i+1}-n^{'}_i}|_{E(f^{n_i^{'}}(x))}\|.\\
\end{split}
\end{equation*}

Because $n^{'}_0=0$, we have that
\begin{equation*}
\begin{split}
\frac{1}{l}\sum_{i=0}^{l-1}\log\|\psi^*_{n_{i+1}-n_i}|_{E(f^{n_i}(x))}\|& \leq \frac{1}{l}\sum_{i=0}^{l}\log\|\psi^*_{n^{'}_{i+1}-n^{'}_i}|_{E(f^{n_i^{'}}(x))}\|+3K \\
& \leq \frac{1}{l}\sum_{j=0}^{\frac{n^{'}_{l+1}}{N_0}} \log\|\psi^*_{N_0}|_{E(f^{jN_0}(x))}\|+3K \\
&\leq \frac{n^{'}_{l+1}}{lN_0} a+3K  \leq \frac{N_2+N_0}{N_0} a+3K \leq b\\
\end{split}
\end{equation*}

\end{proof}

\subsection{Liao's shadowing lemma for scaled linear Poincar\'e flow}

In this subsection we will introduce the Liao's shadowing lemma for scaled linear Poincar\'e flow of~\cite{L89}.
Because we need estimate the difference of the time between the pseudo orbit and real orbit: the item 
(d) below, which only follows from the proof, we provide here the idea of the proof:

\begin{theorem}\label{t.liaoclosinglemma}

Given a compact set $\Lambda_0\subset \Lambda\setminus \Sing(X)$, and 
$\eta>0, T_0 > 0$, for any $\vep > 0$ there exist $\delta> 0$ and $L>0$, such that for any $(\eta,T_0)^*$
quasi hyperbolic orbit arc $\phi_{[0,T]}(x)$ with respect to dominated splitting 
$\cN_x = E(x)\oplus F(x)$ and the scaled linear Poincar\'e flow $\psi_t^*$ which satisfies 
$x, \phi_T(x)\in \Lambda_0$ and $d(x,\phi_T(x))\leq \delta$,
there exists a point $p\in M^d$ and a $\C^1$ strictly increasing function $\theta:[0,T] \rightarrow \mathbb{R}$ 
such that
\begin{itemize}

\item[(a)] $\theta(0)=0$ and $1-\vep <\theta^{'}(t)<1+\vep$;

\item[(b)] $p$ is a periodic point with $\phi_{\theta(T)}(p)=p$;

\item[(c)] $d(\phi_t(x),\phi_{\theta(t)}(p))\leq \vep\|X(\phi_t(x))\|, t\in [0,T]$;

\item[(d)] $d(\phi_t(x),\phi_t(p))\leq L d(x, \phi_T(x))$.

\item[(e)] $p$ has uniform size of stable manifold and unstable manifold.
\end{itemize}
\end{theorem}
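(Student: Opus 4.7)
The plan is to carry out the standard construction of shadowing for a quasi-hyperbolic pseudo-orbit, but with all estimates phrased in normal sections so that the uniformly bounded cocycle $\psi^*_t$ does the work. Concretely, I would fix normal disks $\Sigma_i \subset \cN_{\phi_{t_i}(x)}$ (transplanted into $M$ via $\exp$) of radius proportional to $\|X(\phi_{t_i}(x))\|$, and consider the Poincar\'e return maps $P_i : \Sigma_i \to \Sigma_{i+1}$ between consecutive sections of the given partition $0 = t_0 < t_1 < \dots < t_l = T$. In these normalized coordinates, $DP_i(0) = \psi^*_{t_{i+1}-t_i}$, so the $(\eta, T_0)^*$ quasi-hyperbolic condition translates directly into a non-uniformly hyperbolic condition for the finite composition $P_{l-1} \circ \cdots \circ P_0$, with splitting $\pi_x(E) \oplus \pi_x(F)$ and uniform domination and contraction/expansion rates on each block $[t_i, t_{i+1}]$.

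Next, since $d(x, \phi_T(x)) < \delta$, I would identify $\Sigma_l$ with $\Sigma_0$ via a small $\C^1$ map (translation inside $\cN_{x}$ plus a short flow time) whose norm is controlled by $\delta$, turning the sequence into a periodic pseudo-orbit $\Sigma_0 \to \cdots \to \Sigma_{l-1} \to \Sigma_0$. A Hadamard--Perron / graph transform argument applied to this closed pseudo-orbit, in the spirit of Liao~\cite{L89}, produces a unique fixed sequence $(p_i)_{i=0}^{l-1}$ with $P_i(p_i) = p_{i+1}$ and $p_l = p_0$; reading this back through the exponential charts yields a periodic orbit of $\phi_t$ passing through a point $p$ near $x$. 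The reparametrization $\theta(t)$ is obtained by recording the flow times between successive section crossings of this shadow orbit, which gives (a) with $1 - \vep < \theta'(t) < 1 + \vep$ once $\delta$ is chosen small in terms of $\vep$, and (b), (c) follow directly. Property (e)---uniform size of $W^s(p), W^u(p)$---is the standard byproduct of the graph transform, using that $\Lambda_0$ is compact and disjoint from $\Sing(X)$, so the sections and the estimates for $P_i$ are uniformly $\C^1$ close to the linear models on disks of definite size.

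The heart of the matter is (d). Once $\eta, T_0, \delta, \vep$ are fixed, the \emph{only} input that varies as we apply the shadowing to different pairs $x, \phi_T(x)$ is the closing gap $\xi := d(x, \phi_T(x))$. In the section coordinates this gap is exactly the ``translation'' that closes the loop $\Sigma_l \sim \Sigma_0$, and the graph transform is a uniform contraction on the space of closing sequences; hence the fixed point $(p_i)$ lies at distance $O(\xi)$ from the base points $(\phi_{t_i}(x))$, with constant depending only on the hyperbolic parameters and the angle between $\pi_x(E)$ and $\pi_x(F)$ on $\Lambda_0$. Transferring this bound from normalized section coordinates back to the ambient metric introduces the factor $\sup_{\Lambda_0}\|X\| / \inf_{\Lambda_0}\|X\|$, which is finite since $\Lambda_0 \Subset \Lambda \setminus \Sing(X)$; interpolating across each time block $[t_i, t_{i+1}]$ costs at most another bounded factor because $t_{i+1} - t_i \leq 2T_0$ and $\Phi_t$ has bounded derivative on this time scale. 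This yields (d) with $L$ depending on $\eta, T_0, \Lambda_0, \vep$ but \emph{not} on $T$.

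The main obstacle is precisely this $T$-uniform constant $L$ in (d). Estimate (c) alone is insufficient for the Katok-style argument in Theorem~\ref{t.lowercontinuiation}, because its right-hand side carries the factor $\|X(\phi_t(x))\|$ which conflates the shadowing error with flow-speed fluctuation; the whole point of (d) is to decouple these. The technical delicacy is that, unlike in the diffeomorphism case, here both the spatial gap and the implicit time-gap between $\phi_t(x)$ and $\phi_t(p)$ contribute to $d(\phi_t(x), \phi_t(p))$, and one must show that the time-reparametrization $\theta$ stays within distance $L\cdot\xi$ of the identity \emph{multiplicatively in $\xi$}, not merely within $\vep$. Tracking this through the graph transform---specifically, showing that the cumulative drift $|\theta(t) - t|$ is bounded by the sum of the per-block section-coordinate perturbations and hence by $O(\xi)$ rather than $O(\vep)$---is the new ingredient, and is what I expect to require the most care.
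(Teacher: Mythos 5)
Your outline tracks the paper's strategy closely: rescaled Poincar\'e sections with $DP_i(0)=\psi^*_{t_{i+1}-t_i}$, closing the loop via the small gap $\xi = d(x,\phi_T(x))$, a shadowing argument, and reading off $\theta$ from the cross-times. You also correctly identify (d) as the crux and correctly observe that the cumulative time drift $|\theta(t)-t|$ must be controlled by $O(\xi)$ and not merely by $O(\vep)$. However, there is a genuine gap exactly at the step you flag as ``requiring the most care,'' and the gap is not a small technicality.

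The graph-transform fixed-point argument, as you describe it, delivers only a \emph{sup-norm} estimate $\max_i d(p_i, 0_{\phi_{t_i}(x)}) = O(\xi)$. That is not enough: to bound $|\theta(t)-t|$ you must sum the per-block cross-time errors, $\sum_{i} \bigl| (t_{i+1}-t_i) - t_{\phi_{t_i}(x),\phi_{t_{i+1}}(x)}(p_i)\bigr| \lesssim L_2 \sum_i d(p_i, 0_{\phi_{t_i}(x)})$, and the number of blocks $l$ grows linearly in $T$. A pointwise $O(\xi)$ bound gives only $O(l\xi)$ for the sum, which makes $L$ depend on $T$ and destroys the statement. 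What you actually need is the \emph{summed} estimate $\sum_i d(p_i, 0_{\phi_{t_i}(x)}) \leq L^{*} \xi$ uniformly in $l$, which requires showing that the shadowing errors decay \emph{exponentially} away from the closing location (roughly $d(p_i,0) \lesssim (\lambda^i + \lambda^{l-i})\xi$). Your proposal asserts ``hence by $O(\xi)$'' without supplying this. Moreover, a second issue sits upstream: the sequence $\{\cT_i\}$ is only quasi-hyperbolic, so individual blocks can expand in $E$ and the raw graph transform is not a uniform contraction. The paper handles both problems at once by passing to the well-adapted sequence $g_j$ with $g_j \leq 1$ and the renormalized maps $\tilde\cT_j(a) = g_j^{-1}\cT_j(g_{j-1}a)$, which \emph{are} uniformly hyperbolic; the inequality $g_j\le 1$ transfers the renormalized estimate back, and the exponential-decay sum is then established via the $F$-admissible manifold / local stable-leaf argument in the renormalized coordinates. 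Your proposal never introduces this renormalization, so neither the applicability of the graph transform nor the summability of the errors is established.

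In short: your plan is the right plan and your diagnosis of where the difficulty lives is accurate, but the content that resolves that difficulty --- the well-adapted renormalization to uniformly hyperbolic blocks and the exponential decay of shadowing errors that makes $\sum_i d(p_i, 0_{\phi_{t_i}(x)})$ $O(\xi)$ uniformly in $T$ --- is stated as an expectation rather than proved.
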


\begin{proof}

For simplicity, we assume $M$ is an open set in $R^d$, which implies that for every regular point $x\in M$, $\cN_x$ is a $d-1$-dimensional hyperplane. Because $d(x,\phi_{T}(x))$ is sufficiently small, we may suppose that 
$\phi_{T}(x)\in\cN_x$.

The first step is to translate the problem into the shadowing lemma for a sequence of maps:
By~\cite{GY}[Lemmas 2.2, 2.3], there is $\beta$ depending on $T_0$ such that the holonomy map induced by flow is well defined between
$$\cP_{x,\phi_t(x)}: \cN_x(\beta\|X(x)\|)\rightarrow \cN_{\phi_t(x)}\;\;\; \text{for any}\;\;\; t\in [T_0,2T_0].$$
This is conjugate to the following sequence of maps:
$$\cP^*_{x,\phi_t(x)}, \cN_x(\beta)\rightarrow \cN_{\phi_t(x)}:  \cP^*_{x,\phi_t(x)}(a)=\frac{\cP_{x,\phi_t(x)}(a\|X(x)\|)}{\|X(\phi_t(x))\|}.$$
The tangent map of $\cP^*_{x,\phi_t(x)}$ is uniformly continuous.

Take the sequence of times $0=t_0<t_1<\dots<t_l=T$ in the definition of quasi hyperbolic orbit. 
We consider the local diffeomorphisms induced by the holonomy maps:
$$\cT_i= \cP^*_{\phi_{t_{i-1}}(x),\phi_{t_{i}}(x)}\;\;\; \text{for}\;\;\; 1\leq i\leq l-1;$$
and $ \cT_{l}: \cN_{\phi_{t_{l-1}}(x)}(\beta)\rightarrow \cN_x$. Because $d(x,\phi_T(x))$ is sufficiently small, 
the holonomy map $\cT_l$ is well defined. For each $1\leq i \leq l$, we have
$$T\cT_i(0)=\psi^*_{t_{i}-t_{i-1}}|_{\phi_{t_{i-1}}(x)}.$$
Denote $E_i=\psi_{t_i}(E(x))$ and $F_i=\psi_{t_i}(F(x))$. Then, by the definition of quasi hyperbolic, there is $0<\lambda<1$ such that
such that for $k=1,\dots, l$, we have
$$\prod_{i=1}^{k}\|\cT_i\|\leq \lambda^k;\;\;  \prod_{i=k}^{l}m(\cT_i)\geq \lambda^{k-l-1},$$
and
$$\frac{\|\cT_k|_{E_k}\|}{m(\|\cT_k|_{F_k})}\leq \lambda^2.$$

Now we apply the version of Liao's shadowing lemma on discrete quasi hyperbolic maps (\cite{G}[Theorem 1.1], 
see also~\cite{L79, L85, L96}).
On the $d(x,\phi_{T}(x))$ pseudo orbit
$\{0_x,0_{\phi_{t_1}(x)},\dots, 0_{\phi_{t_{l-1}}(x)}\}$, there is $L>0$ and a 
periodic orbit $p\in \cN_x$ with period $l$ for $\{\cT_0,\cdots, \cT_{l-1}\}$, 
whose orbit $Ld(x,\phi_{T}(x))$-shadows the pseudo orbit.

Using the version of flow tubular theorem~\cite{GY}[Lemmas 2.2], one may prove (a), (b) and (c) above. 
Now let us focus on the proof of (d) and (e). We follow the proof of~\cite{G}. By Lemma 3.1 of~\cite{G}, there is a sequence of positive numbers
$\{c_i\}_{i=1}^{l}$ called {\it well adapted} such that:
\begin{itemize}

\item[(i)] $g_j=\prod_{j=1}^k c_j\leq 1 $, $k=1,\dots, l-1$ and $g_{l}=\prod_{j=1}^{l} c_j= 1 $;

\item[(ii)] denote $\tilde{\cT}_j(a)= g_j^{-1}\cT_j(g_{j-1}a)$, then
$$\|T\tilde{\cT}_j(0)|_{E_j}\|\leq \lambda\;\; \text{and}\;\; m(T\tilde{\cT}_j(0)|_{F_j})\geq \frac{1}{\lambda}.$$

\end{itemize}

It was shown in~\cite{G} (p. 631) that the well adapted sequence is uniformly bounded from above and from zero, and the
sequence of hyperbolic maps $\{\tilde{\cT}_j\}_{j=1}^{l}$ are Lipschtez maps (with uniform Lipschtez constant). 

Denote by 
$$\Psi_k= \cT_{k} \cdots \circ \cT_1 \;\; \text{and}\;\; \tilde{\Psi}_k=\tilde{\cT}_{k}\cdots \circ \tilde{\cT}_1.$$ 
It is easy to see that 
\begin{equation}\label{e.conjugation}
\begin{split}
\Psi_k=g_k \tilde{\Psi}_k \;\; & \text{and}  \;\;\Psi_l=\tilde{\Psi}_l. \\
\end{split}
\end{equation}

Observe that $\{0_{x},0_{\phi_{t_1}}(x),\cdots, 0_{\phi_{t_{l-1}}(x)}\}$ is still a $d(x,\phi_{T}(x))$ pseudo orbit of
sequence of hyperbolic diffeomorphisms $\{\tilde{\cT}_j\}_{j=0}^{l-1}$. By the standard shadowing lemma for hyperbolic
diffeomorphisms (see~\cite{G}[Lemma 2.1]), there is $L>0$ such that the pseudo orbit is $Ld(x,\phi_{T}(x))$-shadowed by periodic orbit $\{\tilde{p},\tilde{p}_1,\dots, \tilde{p}_{l-1}\}$, and this periodic orbit has uniform size
of stable and unstable manifold. By \eqref{e.conjugation}, $\{g_k \tilde{p}_k\}_{k=1}^{l}$ is a pseudo orbit for the 
original sequence of maps $\{\cT_k\}_{k=0}^{l-1}$ and $p_0=\tilde{p}_0$. Hence, $p_0$ has uniform size of stable manifold and
unstable manifold. We conclude (e).

It remains to prove (d). We claim that 
\begin{equation}\label{e.sumdistance}
\begin{split}
\sum_{0\leq i \leq l-1} d(p_i,0_{\phi_{t_i}(x)})& \leq L^{*} d(x,\phi_{t_l}(x)). \\
\end{split}
\end{equation}

By the definition of well adapted sequence, $g_k\leq 1$ for every $1\leq k \leq l$, which implies that
$d(p_k,0_{\phi_{t_k}(x)})\leq d(\tilde{p}_k,0_{\phi_{t_k}(x)})$. Hence, to prove this claim, it suffices to verify
that there is $L^{*}>0$ such that 
$$\sum_{0\leq i \leq l-1} d(\tilde{p}_i,0_{\phi_{t_i}(x)})\leq L^{*} d(x,\phi_{t_l}(x)).$$
This property can be obtained directly from the proof of shadowing lemma of uniformly hyperbolic maps. 
Here we state a proof which depends on the shadowing lemma:

\begin{proof}
We will consider {\it $F$-admissible manifolds}, which are manifolds contained in 
$\cN_{\phi_{t_i}(x)}$ ($0\leq i \leq l$) and satisfy: 
\begin{itemize}
\item tangent to the $F$ cone;
\item has uniform size.
\end{itemize}
Take an $F$ admissible manifold $\cI_0^F\ni 0_x\subset \cN_x$.
Denote $a_0= \cI_0^F\pitchfork W^s_{loc}(\tilde{p}_0)$.
By the hyperbolicity of
$\{\tilde{\cT}_j\}_{j=1}^{l}$, $\tilde{\Psi}_1(\cI_0^F)$ contains an $F$ admissible manifold $\cI_1^F\ni 0_{\phi_{t_1}(x)}$. 
Denote $a_1=\cI^F_1\pitchfork W^s_{loc}(\tilde{p}_1)$, then $a_1=\tilde{\cT}_0(a_0)$.
By induction, we define a sequence of $F$ admissible manifolds $\cI^F_j$, $1\leq j\leq l$ such that 
$\tilde{\Psi}_{j}(\cI^F_0)\supset \cI^F_{j}$. Denote $a_j=\cI^F_j\pitchfork W^s_{loc}(\tilde{p}_j)$, 
then $a_j=\tilde{\Psi}_{j}(a_0)$. Moreover, for $0\leq j \leq l-1$, $0_{\phi_{t_j}(x)}\in \cI^F_{j}$. 

Let $\gamma^E_0\subset W^s_{loc}(\tilde{p}_0)$ which connects $\tilde{p}_0$ and $a_0$ with minimal length, and $\gamma^F_l\subset \cI_l^F$
which connects $a_l$ and $\tilde{\Psi}_l(0_x)$ with minimal length. By the uniformly transversality between
the $F$ admissible manifolds and local stable manifolds,
there is $K>0$ such that
$$\length(\gamma^E_0)\leq Kd(x,\tilde{p}_0)\leq LKd(x,\phi_{T}(x))$$
and 
$$\length(\gamma^F_l)\leq Kd(\phi_{T}(x),\tilde{p}_0)\leq LKd(x,\phi_{T}(x)).$$
For $1\leq i \leq l$, denote $\cG_i:  \cN_x \to \cN_{\phi_{t_i}}(x)$:
$$\cG_i=   \tilde{\cT}_{i+1}^{-1}\circ \dots \circ \tilde{\cT_{l}}^{-1}.$$
Then $\gamma_i=\tilde{\Psi}_i(\gamma^E_0)\cup \cG_i(\gamma^F_l)$ is a piecewise smooth curve 
connecting $0_{\phi_{t_i}(x)}$ and $\tilde{\Psi}_i(\tilde{p})$. Moreover, $\tilde{\Psi}_i(\gamma^E_0) \subset W^s_{loc}(\tilde{p}_i)$
and $\cG_i(\gamma^F_l)\subset \cI^F_i$. By the hyperbolicity, $\tilde{\Psi}_i|_{\gamma^E_0}$ and 
$\cG_i|_{\gamma^F_l}$ both are exponentially contracting. There is $\lambda<1$ such that 
$$\length(\gamma_i)< \lambda^i KL d(x,\phi_{T}(x))+\lambda^{l-i}KLd(x,\phi_{T}(x)).$$

Hence, there is $L^*$ such that:
\begin{equation*}
\begin{split}
\sum_{0\leq i \leq l-1} d(\tilde{p}_i,0_{\phi_{t_i}(x)})& \leq \sum \length(\gamma_i) \\
& \leq \sum_{0\leq i\leq l-1} (\lambda^i+\lambda^{l-i})KLd(x,\phi_{T}(x))\\
& \leq L^* d(x,\phi_{T}(x)).\\
\end{split}
\end{equation*}

\end{proof}
 
Let us continue the proof.
For any $x\in M\setminus \Sing(X)$ and $t\in [T_0,2T_0]$, one may define the {\it cross time function}
$t_{x,\phi_{t}(x)}: \cN_x(\beta)\rightarrow \mathbb{R}$:
for any $a\in \cN_x(\beta)$, $\phi_{t_{x,\phi_t(x)}(a)}(a\|X(x)\|)\in \cN_{\phi_t(x)}$. The function
$t_{x,\phi_t(x)}$ is smooth by the implicite function theorem, and whose derivative is uniformly continuous
(for example, bounded by $L_2$), which can be deduced from the proof of ~\cite{GY}[Lemmas 2.3].
Then 
$$\sum_{i=0}^{l-1} |(t_{i+1}-t_i)-t_{\phi_{t_i}(x),\phi_{t_{i+1}}(x)}(p_i)|<L^*L_2 d(x,\phi_{T}(x)).$$
Moreover, there is $L_3$ such that for any $y,z\in \cN_x(\beta|X(\phi_i(x))|)$, and $t_i\leq t<t_{i+1}$, we have 
$$d(\phi_s(y),\phi_s(z))<L_3 d(y,z).$$

Now we are prepare to prove (d):

Suppose $t_i\leq t < t_{i+1}$, denote by $T_i=\sum_{j\leq i-1}t_{\phi_{t_j}(x),\phi_{t_j+1}(x)}(p_j)$.
\begin{eqnarray*}
d(\phi_t(x),\phi_t(p))&=&d(\phi_{t-t_i}(\phi_{t_i}(x)),\phi_{t-T_i}(p_i))\\
&\leq & d(\phi_{t-t_i}(\phi_{t_i}(x)),\phi_{t-t_i}(p_i))+d(\phi_{t-t_i}(p_i),\phi_{t-T_i}(p_i))\\
&<&L_3d(\phi_{t_i}(x),p_i)+|t_i-T_i|D\\
&<&	L_3L^{*}d(x,\phi_{T}(x))+L_2L^{*}Dd(x,\phi_{T}(x)).
\end{eqnarray*}
Let $L=L_3L^*+L_2L^{*}D$, the proof is complete.

\end{proof}
\subsection{Proof of Theorem~\ref{t.shadowing}}

It is easy to see that Theorem~\ref{t.shadowing} is a direct corollary of Lemma~\ref{l.goodset}
and Theorem~\ref{t.liaoclosinglemma}.

\section{Proof of Theorem~\ref{t.isolated}\label{apendix.isolation}}

The following property comes directly
from the definition of Lyapunov stable chain recurrent class, and will be used frequently during
the proof.

\begin{lemma}\label{l.lyapunov}

Let $\Lambda$ be a Lyapunov stable chain recurrent class of $\phi_t$. Then for any $x\in \Lambda$,
its unstable set is contained in $\Lambda$.

\end{lemma}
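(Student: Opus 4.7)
The plan is to unwind the definitions of Lyapunov stability and of the unstable set of $x$, and observe that the two fit together with no extra input. Recall that $y$ belongs to the unstable set $W^u(x)$ of a flow point $x$ exactly when $d(\phi_{-t}(x),\phi_{-t}(y))\to 0$ as $t\to +\infty$. Fix $x\in\Lambda$ and such a $y$; the task is to show $y\in\Lambda$, and since $\Lambda=\bigcap_{i\geq 1}U_i$ by the first bullet of Lyapunov stability, it suffices to prove $y\in U_i$ for every $i\geq 1$.

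The key step will be, for each $i$, to pull $y$ back far enough that its orbit lands in $U_{i+1}$. Concretely, pick $i\geq 1$. Because $\Lambda$ is invariant and $\Lambda\subset U_{i+1}$, the backward orbit of $x$ satisfies $\phi_{-t}(x)\in\Lambda\subset U_{i+1}$ for every $t\geq 0$; since $U_{i+1}$ is a compact neighborhood of $\Lambda$, there is an open neighborhood of $\Lambda$ contained in $U_{i+1}$, and by the definition of $W^u(x)$ there exists $T=T(i)>0$ with $\phi_{-T}(y)\in U_{i+1}$. Applying the second bullet of Lyapunov stability with $t=T\geq 0$ yields
\[
y=\phi_T\bigl(\phi_{-T}(y)\bigr)\in\phi_T(U_{i+1})\subset U_i.
\]
Since $i$ was arbitrary, $y\in\bigcap_{i\geq 1}U_i=\Lambda$, which completes the proof.

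There is essentially no obstacle: the only thing to be careful about is that the threshold $T$ depends on $i$, so one must do the argument one $i$ at a time rather than extracting a single tail. No hyperbolicity of $x$ is used, which is good because points of $\Lambda$ need not be hyperbolic; the unstable set is understood purely as the set of points whose negative orbit is asymptotic to that of $x$. The same proof shows, more generally, that the unstable set of any \emph{subset} of $\Lambda$ is contained in $\Lambda$, which is the form in which the lemma is invoked later.
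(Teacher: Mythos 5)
Your proof is correct and is exactly the direct unwinding of the definition that the paper has in mind (the paper states this lemma without proof, remarking that it "comes directly from the definition" of Lyapunov stable chain recurrent class). The only point worth noting is that any reasonable notion of unstable set (convergence to the backward orbit of $x$, or to $\Orb(x)$, or $\alpha$-limit in $\Lambda$) feeds into your argument equally well, since all you need is that $\phi_{-T}(y)$ eventually enters $U_{i+1}$, which your use of invariance and compactness of $\Lambda$ supplies.
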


We also need two {\it $\C^1$ generic properties} for flows.

\begin{proposition}\label{p.homoclinic class}

There is a $\C^1$ residual subset $\cR$ of flows, such that for any flow $\phi_t\in \cR$, and for any chain recurrent
class $C$ of $\phi_t$ which contains a periodic point $p$, $C$ coincides to the homoclinic class of $\Orb(p)$. In particular,
$C$ is transitive.

\end{proposition}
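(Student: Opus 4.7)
The plan is to adapt to the flow setting the Bonatti-Crovisier style argument showing that, $\C^1$ generically, every chain recurrent class containing a periodic point coincides with its homoclinic class. Two generic ingredients will be combined: (i) the Kupka-Smale property, so that every periodic orbit is hyperbolic and admits a unique continuation under small perturbations; and (ii) the generic density of periodic orbits inside each chain recurrent class, which for flows follows from the flow version of the connecting lemma for pseudo orbits (Bonatti-Crovisier for diffeomorphisms, extended to flows in the literature). Call $\cR_0$ the intersection of these two residual subsets.

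Next, fix a countable base $\{V_n\}_{n\in \NN}$ of open sets in $M$. For each pair of indices $(n,m)$, I would define
\[
\cR_{n,m} = \{\phi_t : \text{whenever hyperbolic periodic orbits } \gamma_n\subset V_n,\; \gamma_m\subset V_m \text{ lie in the same chain class, they are homoclinically related}\}.
\]
Openness of $\cR_{n,m}$ is immediate: the existence of a transverse heteroclinic cycle is a $\C^1$ open condition, while the negation of the hypothesis for orbits of any bounded period is also open, and the Kupka-Smale structure controls large periods. Density is the key step: given $\phi_t$ with chain-related periodic orbits $\gamma_n,\gamma_m$, chain recurrence provides periodic $\vep$-pseudo orbits between them for every $\vep>0$, and Hayashi's connecting lemma for flows produces an arbitrarily small $\C^1$ perturbation realizing an honest heteroclinic intersection in each direction. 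A further generic perturbation makes both intersections transverse, so $\gamma_n$ and $\gamma_m$ become homoclinically related. Hence $\cR_{n,m}$ is open and dense, and $\cR := \cR_0 \cap \bigcap_{n,m}\cR_{n,m}$ is residual.

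To conclude, given $\phi_t\in \cR$, let $p$ be a periodic point with chain recurrence class $C$ and homoclinic class $H(\Orb(p))$. The inclusion $H(\Orb(p))\subseteq C$ is always true. Conversely, for any $x\in C$ and any base neighborhood $V_n\ni x$, ingredient (ii) yields a hyperbolic periodic orbit $\gamma_n\subset V_n$ lying in $C$; picking $V_m\ni p$ and applying the defining property of $\cR_{n,m}$ shows $\gamma_n$ is homoclinically related to $\Orb(p)$, hence $\gamma_n\subset H(\Orb(p))$. Letting $V_n$ shrink to $x$ gives $x\in H(\Orb(p))$, proving $C=H(\Orb(p))$; transitivity of $C$ then follows from the Birkhoff-Smale theorem for homoclinic classes. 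The main obstacle is the density step: Hayashi's connecting lemma for flows is considerably more delicate than its diffeomorphism counterpart because the perturbation must respect the flow direction and the time parametrization, and its use both to create the heteroclinic connections in the definition of $\cR_{n,m}$ and to upgrade chain recurrence to density of periodic orbits within each class is really the crux of the argument.
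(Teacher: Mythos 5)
The paper does not actually give a proof of this proposition; it simply records it as the flow version of Bonatti--Crovisier's result for diffeomorphisms (\cite{BC03}, Corollary 1.4 and Remark 1.10). Your proposal supplies the missing argument by redoing the Bonatti--Crovisier scheme, which is the right framework, but it has a genuine gap at the crucial point.

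The density of $\cR_{n,m}$ is \emph{not} within reach of Hayashi's connecting lemma. Hayashi's lemma requires that the unstable manifold of $\gamma_n$ and the stable manifold of $\gamma_m$ accumulate on the orbit of a single common point $z$; it closes one ``near-miss''. If $\gamma_n$ and $\gamma_m$ merely lie in the same chain recurrence class, what you have are $\vep$-pseudo-orbits between them, that is, orbit segments separated by many small jumps, and in general there is no single point where the two invariant manifolds come close simultaneously. Upgrading pseudo-orbits to honest orbits by perturbation is precisely the content of the Bonatti--Crovisier connecting lemma for pseudo-orbits, which is a much deeper statement: it assembles many local (``tidy'') perturbations coherently along the entire pseudo-orbit, and it only holds under additional generic hypotheses on $f$ (no obstructing periodic orbits, etc.). Replacing ``Hayashi's connecting lemma'' by ``the connecting lemma for pseudo-orbits (Bonatti--Crovisier, and its flow version)'' is not a cosmetic change; without it the density step fails. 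Relatedly, your openness claim for $\cR_{n,m}$ is also too quick: whether two periodic orbits lie in the same chain recurrence class is neither an open nor a closed condition in the $\C^1$ topology, so the set $\cR_{n,m}$ as you define it is not obviously open, and the actual Bonatti--Crovisier argument has to be formulated differently (e.g.\ via semicontinuity of $x\mapsto H(p)$ and of the chain class on a residual set) to make the Baire-category machinery work. Once these two points are repaired --- invoke the flow version of the pseudo-orbit connecting lemma, and set up the residual set along the lines of \cite{BC03} rather than by naive openness of a conditional --- the rest of your outline (Kupka--Smale, density of periodic orbits in each class, transitivity via Birkhoff--Smale) goes through and matches what the paper is implicitly citing.
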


\begin{proposition}\label{p.generic}

There is a $\C^1$ residual subset $\cR$ of flows, such that for any flow $\phi_t\in \cR$ and any non-trivial chain recurrent class
$C$ of $\phi_t$, suppose $C$ contains a hyperbolic singularity $\sigma$ and a hyperbolic periodic point $p$, further suppose there is a dominated splitting $E^s(\sigma)=E^{ss}(\sigma)\oplus E_1^{c}(\sigma)$ on $\sigma$ where $E^c_1$ is 1-dimensional weak contracting subbundle, then the strong stable manifold $W^{ss}(\sigma)$ of $\sigma$ divides the stable manifold into two branches $W^{s,+}(\sigma)$ and $W^{s,-}(\sigma)$, and if $\cC\cap (W^{s,i}(\sigma)\setminus \sigma)\neq \emptyset$ for $i=+,-$, then $W^{s,i}(\sigma)\cap W^u(p)\neq \emptyset$.
\end{proposition}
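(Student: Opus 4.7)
The plan is to establish Proposition~\ref{p.generic} by a standard Baire category argument combined with Hayashi's connecting lemma for flows. First I would fix countable bases $\{B_i\}$ of open balls in $M$ and a countable set of ``data'' indexing candidate hyperbolic singularities and hyperbolic periodic orbits (say, parametrized by rational period bounds and balls containing the orbits), so that the generic property can be formulated as a countable intersection.

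Step 1, preliminary reductions. I would start by intersecting with the residual subset of Kupka--Smale flows (all critical elements hyperbolic, invariant manifolds of critical elements in general position), and then with the residual subset from Proposition~\ref{p.homoclinic class}, so that every chain recurrent class containing a hyperbolic periodic orbit $p$ coincides with the homoclinic class $H(\Orb(p))$ and is in particular transitive. I would also intersect with the residual subset on which $\phi_t\mapsto \overline{W^u(\Orb(p))}$ and $\phi_t\mapsto \overline{W^s(\sigma)}$ are continuous in the Hausdorff topology (they are automatically lower semi-continuous, hence continuous on a residual set), and similarly for the branches $W^{s,\pm}(\sigma)$ separated by $W^{ss}(\sigma)$ once the dominated splitting $E^{ss}\oplus E_1^c$ is present.

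Step 2, dichotomy on basic open sets. For each triple $(i,j,k)$ indexing balls $B_i, B_j$ and a piece of critical data $k$ (a periodic orbit datum and a singularity datum together with a choice of branch $\pm$), let $\cU_{i,j,k}$ be the set of flows for which the corresponding $W^u(\Orb(p))$ has a transverse intersection with the corresponding branch $W^{s,i}(\sigma)$ inside $B_i\cup B_j$. This set is open by transversality. I would then define $\cV_{i,j,k}$ as the set of flows where either (a) the corresponding chain recurrent class has empty intersection with $B_j$, or (b) $B_i\cap (W^{s,i}(\sigma)\setminus\sigma)=\emptyset$, or (c) $\phi_t\in\cU_{i,j,k}$. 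The key claim is that $\cV_{i,j,k}$ contains an open and dense subset of the $C^1$ space of flows; the required residual subset is then $\bigcap_{i,j,k}\cV_{i,j,k}$.

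Step 3, producing the connection. Suppose $\phi_t$ is a Kupka--Smale flow in the residual set of Step~1, with a non-trivial chain recurrent class $C=H(\Orb(p))$ containing a hyperbolic singularity $\sigma$ with the specified dominated splitting, and pick $y\in C\cap (W^{s,+}(\sigma)\setminus\sigma)$. Since $C$ is transitive and equals $H(\Orb(p))$, the point $y$ can be approximated by orbits homoclinically related to $\Orb(p)$, hence by returns of $W^u(\Orb(p))$; equivalently, $y\in \overline{W^u(\Orb(p))}$. Applying Hayashi's $C^1$ connecting lemma for flows at a point of $W^u(\Orb(p))$ near $y$, and using that $y$ lies on the branch $W^{s,+}(\sigma)$ away from $\sigma$ (where the flow is regular), I would produce a $C^1$-small perturbation $\psi_t$ for which $W^u(\Orb(p))$ meets $W^{s,+}(\sigma)$ at a point near $y$. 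Because $W^{ss}(\sigma)$ has codimension one inside $W^s(\sigma)$ and the perturbation can be localised away from $W^{ss}(\sigma)$, the intersection point lies on the correct branch. A further $C^1$-small perturbation within Kupka--Smale flows makes this intersection transverse, and hence robust. This produces a flow in the interior of $\cU_{i,j,k}$ arbitrarily close to $\phi_t$, which combined with the dichotomy of Step~2 yields open density of $\cV_{i,j,k}$.

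The main obstacle is Step~3: one has to apply the connecting lemma in the presence of a singularity $\sigma$ that is only weakly hyperbolic along the center direction $E_1^c$, and in such a way that the produced intersection lies specifically on the prescribed branch $W^{s,i}(\sigma)$ rather than on the other branch $W^{s,-i}(\sigma)$. The careful point is that $y$ lies away from $\sigma$ and from $W^{ss}(\sigma)$, so the perturbation can be supported in a small flow box disjoint from $\sigma$ and from $W^{ss}(\sigma)$, which guarantees that the branch is preserved; combining this with the lower semi-continuity of $W^u(\Orb(p))$ on the residual set of Step~1 ensures that the branch assignment persists in the limit along any $C^1$-convergent sequence of perturbations.
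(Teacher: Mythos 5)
Your proposal takes essentially the same route as the paper, which is stated in a single sentence: apply the connecting lemma of \cite{BC03} to a branch of the stable manifold of the singularity and the unstable manifold of the periodic orbit. You flesh this out using Hayashi's connecting lemma localised at a point $y\in C\cap\bigl(W^{s,+}(\sigma)\setminus\sigma\bigr)$; since on the residual set from Proposition~\ref{p.homoclinic class} one has $C=H(\Orb(p))$, the point $y$ is accumulated by $W^u(\Orb(p))$, and since $y$ is a fixed point of $W^{s,+}(\sigma)$ strictly away from the codimension-one submanifold $W^{ss}(\sigma)$, the perturbation can be supported in a flow box around $y$ and therefore lands on the prescribed branch. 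Both routes hinge on the same observation, and the branch-preservation argument you give is exactly the content the paper alludes to with ``a branch of stable manifold.''

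The one place your write-up is looser than it should be is the dichotomy in Step~2. Condition (a), ``the corresponding chain recurrent class has empty intersection with $B_j$,'' and condition (b), ``$B_i\cap(W^{s,i}(\sigma)\setminus\sigma)=\emptyset$,'' are not $C^1$-open conditions on the flow: individual chain recurrent classes do not vary upper semi-continuously (they can explode under perturbation), and stable manifolds vary only lower semi-continuously. So $\cV_{i,j,k}$ as written is not visibly a union of an open set with $\cU_{i,j,k}$, and it is not clear it contains an open dense subset. The standard repair, which is implicit in the framework of \cite{BC03}, is to replace (a) and (b) by their robust negations --- ``there exists a $C^1$-neighbourhood of $\phi_t$ on which the hyperbolic continuations avoid $B_j$ (resp.\ $B_i$)'' --- which are open by fiat. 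The dichotomy then reads: either one of these robust non-intersection conditions holds (giving an open set inside $\cV_{i,j,k}$), or for a dense set of perturbations the hypothesis can be realised, and the connecting lemma (Hayashi or Bonatti--Crovisier) produces a transverse intersection, landing in the open set $\cU_{i,j,k}$. With that adjustment, and once $\cU_{i,j,k}$ is defined simply as ``the continuations have a transverse intersection between $W^u(\Orb(p_k))$ and $W^{s,+}(\sigma_k)$'' without the awkward restriction to $B_i\cup B_j$, the Baire argument closes and you recover the statement.

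A final small point: you should also note that $y$ is automatically non-singular and non-periodic (its forward orbit tends to $\sigma$), so Hayashi's lemma is indeed applicable at $y$, and that the perturbation tube can be chosen disjoint from $\Orb(p)$, from $\sigma$, and from $W^{ss}(\sigma)$, which is what guarantees that after perturbation the connecting orbit still lies in $W^u(\Orb(p))$ at one end and in $W^{s,+}(\sigma)$ at the other. You state this, but it is worth making explicit that it relies on $y\notin W^{ss}(\sigma)$, which holds by the definition of $W^{s,+}(\sigma)$ as a component of $W^s(\sigma)\setminus W^{ss}(\sigma)$.
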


Proposition~\ref{p.homoclinic class} is the flow version of similar results of diffeomorphisms stated in \cite{BC03}[Corollary 1.4, Remark 1.10]. Proposition~\ref{p.generic} can be proved by applying the connecting lemma of~\cite{BC03} 
on a branch of stable manifold of the singularity and the unstable manifold of the other periodic orbit.

\begin{proof}[Proof of Theorem~\ref{t.isolated}:]
Let $\cR_2$ be the residual subset of flows which are Kupka-Smale and satisfies Propositions~\ref{p.homoclinic class} and
~\ref{p.generic}. 

By Corollary~\ref{main.B} and Proposition~\ref{p.homoclinic class}, $\Lambda$ coincides to the homoclinic class of
a hyperbolic orbit $\Orb(p)$ and is transitive.
In order to prove $\Lambda$ is an attractor, it remains to show that $\Lambda$ is an attracting set. From
the definition of Lyapunov stable chain recurrent class, it suffices to prove that $\Lambda$ is isolated, i.e.,
it cannot be approximated by other chain recurrent classes.

Let $\{U_n\}$ be the sequence of attracting neighborhoods of $\Lambda$ in the definition of Lorenz-like class. 
Suppose by contradiction that $\Lambda$ is not isolated, i.e., there are chain recurrent classes $\cC_n$ and $y^n\in \cC_n$ 
such that $y^n\rightarrow \Lambda$. We may assume that each $\cC_n$ is contained in an attracting neighborhood 
$U_n$. Thus, $\limsup_{n}\cC_n\subset \Lambda$.

Because $\phi_t\in\cR_2$ is Kupka-Smale, the singularities are isolated. Hence, $\phi_t$ contains at most finitely many
singularities. We suppose $\cup_n\cC_n$ contains no singularities. Because $\cC_n$ also induces the 
singular hyperbolic splitting from $\Lambda$, it contains no singularity implies is hyperbolic. 
Therefore $C_n$ contains a hyperbolic periodic orbit $\Orb(x^n)$. Denote $\Lambda_0\subset \Lambda$ 
the Hausdorff limit of $\Orb(x^n)$, which is a compact invariant subset. 

We claim that $\Lambda_0$ contains a singularity. Suppose $\Lambda_0$ contains no singularities. Because it admits a singular hyperbolic splitting, hence, $\Lambda_0$ is a hyperbolic set. For $n$ large enough, the two hyperbolic sets $\cC_n$ and $\Lambda_0$ are homoclinic related to each other. This implies that $\Lambda_0$ and $\cC_n$ are contained in the same chain recurrent class, a contradiction with the assumption that $\Lambda$ and $\cC_n$ are different chain recurrent classes.

Let $\sigma$ be a singularity contained in $\Lambda_0$. Then as in the proof of Lemma~\ref{l.singularities},
$\sigma$ admits a partially hyperbolic splitting $E^{ss}\oplus E^{cs}_1\oplus E^u$ where $E^{cs}_1$ is a 1-dimensional
weak stable bundle and $\cF^{ss}\cap \Lambda=\emptyset$. Then $\cF^{ss}$ divides $W^s(\sigma)$ into two branches $W^{s,+}(\sigma)$ and $W^{s,-}(\sigma)$. 

By modifying $x^n$ to a different point in the same periodic orbit, we assume that $x^n\rightarrow \sigma$. Fix a small
$\vep$, for each $x^n$, let $t_n<0$ be the time satisfying
$$(x^n)_{t_n}\in \partial(B_\vep(\sigma))\;\; \text{and}\;\; (x^n)_{[t_n,0]}\subset B_\vep(\sigma).$$
Denote $z^n=(x^n)_{t_n}$. Replacing by a subsequence, suppose $lim_n z^n = z^0$, then $z^0\in W^s(\sigma)$. 
We may further suppose $y^0\in W^{s,+}(\sigma)$. 

By Proposition~\ref{p.generic}, $W^u(\Orb(p))\pitchfork W^{s,+}(\sigma)\neq \emptyset$. Denote $a$ an intersecting point.
There is $t$ such that $a\in W^{uu}(p_t)$. Consider a disk $D\ni a \subset W^{uu}(p_t)$. 
Then by $\lambda$ lemma, $\phi_t(D) \rightarrow W^u(\sigma)$ from the `right' and $\{\phi_t(D);t>0\}$ is a
submanifold tangent to the $F^{cu}$ bundle with dimension $dim(F^{cu})$ . Therefore for $n$ large enough, $\cF^s(x^n)\pitchfork \{\phi_t(D);t>0\}\neq \emptyset$. Because $\Lambda$ is a Lyapunov stable chain recurrent class, by Lemma~\ref{l.lyapunov}, $\phi_t(D)\subset \Lambda$. 
Thus $\cF^{s}(x^n)\cap \Lambda\neq \emptyset$. In particular, this implies that 
$$d(\phi_t(x^n), \Lambda)=d(\phi_t(x),\phi_t(\Lambda))\to 0.$$ 
Which only occurs when $\cC^n\cap \Lambda\neq \emptyset$, a contradiction.
This completes the proof.
\end{proof}

\end{document}